\crefname{enumi}{part}{parts}
\Crefname{enumi}{Part}{Parts}
\newcommand{\CC}{\mathbb C}
\newcommand{\PP}{\mathbb P}
\renewcommand{\SS}{\mathcal{S}}
\newcommand{\OO}{\mathcal{O}}
\newcommand{\QQ}{\mathcal{Q}}
\newcommand{\LL}{\mathcal L}
\DeclareMathOperator{\Gr}{\operatorname{\mathbf{G}}}
\newcommand{\GG}{\mathcal{G}}
\newcommand{\UU}{\mathcal{U}}
\newcommand{\FF}{\mathcal{F}}
\newcommand{\ra}{\rightarrow}
\newcommand{\EE}{\mathcal{E}}
\newcommand{\lp}{(}
\newcommand{\rp}{)}
\renewcommand{\det}{\operatorname{det}}
\DeclareMathOperator{\lw}{lw}
\DeclareMathOperator{\todd}{todd}
\DeclareMathOperator{\ext}{ext}
\DeclareMathOperator{\ch}{ch}
\DeclareMathOperator{\Quot}{Quot}
\DeclareMathOperator{\Hom}{Hom}
\DeclareMathOperator{\Ext}{Ext}
\DeclareMathOperator{\Tor}{Tor}
\numberwithin{equation}{section}
\newtheorem{thm}[equation]{Theorem}
\newtheorem{lem}[equation]{Lemma}
\newtheorem{prop}[equation]{Proposition}
\newtheorem{conj}[equation]{Conjecture}
\theoremstyle{remark}
\newtheorem{remark}[equation]{Remark}
\title[Representations in Strange Duality]{Representations in Strange Duality: Hilbert Schemes paired with higher rank spaces}
\author{Drew Johnson}
\email{jared.johnson@math.ethz.ch}
\address{ETH---Z\"{u}rich, Switzerland}
\date{\today}
\begin{document}

\maketitle

\begin{abstract}
	We examine a sequence of examples of pairs of moduli spaces of sheaves on $\PP^2$ where Le Potier's strange duality is expected to hold. One of the moduli spaces in these pairs is the Hilbert scheme of two points. We compute the sections of the relevant theta bundle as a representation of $SL(X)$, where $\PP^2=\PP(X)$. For the higher rank space, we construct a moduli space using the resolution of exceptional bundles from Coskun, Huizenga, and Woolf \cite{coskun_effective_2014}. We compute a subspace of the sections of the theta bundle which is dual to the sections on the Hilbert scheme.
	
	In the second part, we use a result from Goller and Lin \cite{goller_rank1_2019} to rigorously apply the ``finte Quot scheme method", introduced in Marian and Oprea \cite{marian_level-rank_2007} and Bertram, Goller, and Johnson \cite{bertram_potiers_2016}. This requires us to prove that the kernels in appearing in the Quot scheme have the appropriate resolution by exceptional bundles. 
	
\end{abstract}

\section{Introduction}
\subsection{Strange Duality for \texorpdfstring{$\PP^2$}{P2}} Strange duality is a conjectural perfect pairing between spaces of sections of theta bundles on moduli spaces of sheaves with orthogonal invariants over a fixed variety $Y$.

We focus on the case of $\PP^2$ in this paper. If  $h=c_1(\OO(1))$ is the hyperplane class, we write elements $a+bh+ch^2 \in H^*(\PP^2,\mathbb Q)$ as triples $(a,b,c)$. 
Let $e,f$ 
be cohomology classes that are orthogonal with respect to the bilinear form 
\[
\chi(e\cdot f) = \int_{\PP^2} e\cdot f \cdot \todd(\PP^2),
\]
The Hirzebruch-Riemann-Roch Theorem implies that for $E$ and $F$ coherent sheaves on $Y$, the Chern characters pair as
\[
\chi\left(\ch(E)\cdot \ch(F)\right) = \chi(E \otimes F) := \sum (-1)^i h_i(E\otimes F).
\]

Let $M(e)$ and $M(f)$ be moduli spaces of sheaves of the respective Chern characters. We will say more specifically which spaces we want to use later. 

We say that $e$ and $f$ are \emph{candidates for strange duality}, if, in addition to the orthogonality condition, we have
\begin{enumerate}
	\item $H^2(E\otimes F)=0$, and $\Tor^1(E,F)=\Tor^2(E,F) = 0$  for $(E,F) \in M(e) \times M(f)$ away from a subset of codimension 2.
	\item There exists some pair $(E,F) \in M(e) \times M(f)$ so that $H^0(E \otimes F)=0$.
\end{enumerate}  
These conditions ensure that the  ``jumping locus''
\[
\Theta = \{\, (E,F) \mid \Hom(E, F) \neq 0 \,\} \subset M(e) \times M(f)
\]
has the structure of a Cartier divisor (see \cite{le_potier_dualite_2005}, \cite{scala_dualite_2007}, \cite{danila_resultats_2002}), \cite{marian_tour_2008}).

We also have divisors 
\[
\Theta_F = \{[ E]: H^0( E \otimes F) >0\} \subset M(e), \;\;\;\; \Theta_{E} = \{[F]:H^0(E \otimes F) >0\} \subset M(f). 
\] 
In the case that $M(e)$ has a universal family $\EE$, the class of $\Theta_F$ is given by
\begin{equation}
 \det( Rp_*(\EE\otimes  q^*F))^\vee \label{eq:theta-class}
\end{equation}
where $p$ and $q$ are the projections on $M(e) \times \PP^2$ \cite{marian_tour_2008}. The class of $\Theta_E$ is computed similarly .

The Picard group of moduli spaces of sheaves on $\PP^2$ is discrete. Since  $\Theta_F$ varies continuously as $F$ varies in $M(f)$, it follows that the $\Theta_F$ are all linearly equivalent; we refer to the associated line bundle as $\mathcal O(\Theta_f)$ and similarly for $\mathcal O(\Theta_{e})$.
Then, one can  check that the  line bundle associated to $\Theta$ satisfies
\[
{\mathcal O}_{M(e)\times M(f)}(\Theta) = {\mathcal O}_{M(e)}(\Theta_f)\boxtimes {\mathcal O}_{M(f)}(\Theta_{e})
\]
Thus
$$H^0\big(M(e) \times M(f), {\mathcal O}(\Theta)\big) = H^0\big(M(e),{\mathcal O}(\Theta_f)\big) \otimes 
H^0\big(M(f),{\mathcal O}(\Theta_{e})\big),$$
so a section defining $\Theta$ determines a map (up to a choice of a scalar):
\begin{equation}
\operatorname{SD}_{e,f}: H^0\big(M(f),{\mathcal O}(\Theta_{e})\big)^\vee \rightarrow H^0\big(M(e),{\mathcal O}(\Theta_f)\big) \label{eq:sd}
\end{equation}
One could interpret $\operatorname{SD}_{e,f}$ as taking the hyperplane $H_F$ in $H^0\big(M(f),{\mathcal O}(\Theta_{e})\big)$ of sections vanishing at $[F] \in M_S(f)$ to the section (up to scaling) $\Theta_F \in H^0\big(M(e),{\mathcal O}(\Theta_f)\big)$. 

\begin{conj}[Le Potier's Strange Duality] $\operatorname{SD}_{e,f}$ is an isomorphism.
\end{conj}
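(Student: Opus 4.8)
The full conjecture is open in general; the plan here is to establish it for the pairs $(e,f)$ considered in this paper, where $M(e)=(\PP^2)^{[2]}$ and $M(f)$ is the higher-rank space built from the Coskun--Huizenga--Woolf resolution \cite{coskun_effective_2014}. Since $\operatorname{SD}_{e,f}$ is a linear map of finite-dimensional spaces, it suffices to prove (i) $\dim H^0(M(f),\OO(\Theta_{e}))=\dim H^0(M(e),\OO(\Theta_f))=:r$ and (ii) that $\operatorname{SD}_{e,f}$ is injective. I would handle (i) by explicit computation and (ii) by an incidence count on a finite Quot scheme.

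For the computation I would first determine $H^0(M(e),\OO(\Theta_f))$ as an $SL(X)$-representation: using the description of $(\PP^2)^{[2]}$ as the blow-up of $\operatorname{Sym}^2\PP^2$ along the diagonal together with the formula \eqref{eq:theta-class} for the theta bundle, the cohomology can be pushed down to $\operatorname{Sym}^2\PP^2$ and evaluated by Borel--Weil together with Littlewood--Richardson, which in particular pins down $r$. On the $M(f)$ side I would use the exceptional resolution of \cite{coskun_effective_2014} to write down a universal family, express $\OO(\Theta_{e})$ via \eqref{eq:theta-class}, and exhibit an $r$-dimensional subspace $W\subseteq H^0(M(f),\OO(\Theta_{e}))$ carrying an explicit pairing with $H^0(M(e),\OO(\Theta_f))$ that is perfect. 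Matching this pairing (up to a scalar) with $\operatorname{SD}_{e,f}$ shows that $\operatorname{SD}_{e,f}$ is surjective and reduces (i)--(ii) to the single inequality $\dim H^0(M(f),\OO(\Theta_{e}))\le r$.

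That inequality I would obtain from the \emph{finite Quot scheme method} of Marian--Oprea \cite{marian_level-rank_2007} and Bertram--Goller--Johnson \cite{bertram_potiers_2016}. Fix a direct sum of line bundles $V$ on $\PP^2$ so that the Quot scheme $\Quot$ of rank-one quotients $0\to S\to V\to\QQ\to 0$ with $\QQ$ of the Hilbert-scheme invariants has expected dimension $0$ (which the orthogonality $\chi(e\cdot f)=0$ arranges). For each such quotient, $\QQ$ represents a point of $M(e)$, the kernel $S$, suitably normalized, a point of $M(f)$, and the sequence places the pair $([S],[\QQ])$ on the divisor $\Theta$. If $\Quot$ is finite, reduced, and of length $\ell$, then pulling $\OO(\Theta_f)$ and $\OO(\Theta_{e})$ back along the two evaluation maps $\Quot\to M(e)$, $\Quot\to M(f)$ and comparing them, together with the enumerativity theorem of Goller--Lin \cite{goller_rank1_2019} (which identifies the virtual count computed by localization with the honest point count and with the Verlinde-type number $\dim H^0(M(f),\OO(\Theta_{e}))$), yields $\ell=r$ and the non-vanishing of the incidence determinant that represents $\operatorname{SD}_{e,f}$; hence $\operatorname{SD}_{e,f}$ is injective and therefore an isomorphism.

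The main obstacle, and the bulk of the work, is verifying the hypotheses on $\Quot$ --- in particular that the kernels it produces actually lie in $M(f)$, i.e. that the kernel of a general quotient $V\twoheadrightarrow\QQ$ in the finite $\Quot$ admits precisely the Coskun--Huizenga--Woolf resolution by exceptional bundles defining $M(f)$. I would prove this by stratifying $\Quot$ according to the geometry of the length-two subscheme on the Hilbert side, reading off the resolution of the kernel on each stratum from the Beilinson-type spectral sequence attached to $V$ and $\QQ$, and bounding the dimension of each stratum to rule out excess components; finiteness would then follow from semistability together with $\chi(e\cdot f)=0$, and reducedness from the vanishing of the relevant obstruction space, a tangent-space computation parallel to the one in \cite{bertram_potiers_2016}.
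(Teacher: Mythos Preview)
The statement you are attempting to prove is labeled a \emph{Conjecture} in the paper, and the paper does not prove it. What the paper actually establishes is: (a) an exact computation of $H^0(\PP(X)^{[2]},\OO(\Theta_e))$ as an $SL(X)$-module; (b) only a \emph{subspace} of $H^0(M(e),\OO(\Theta_f))$ on the higher-rank side, isomorphic to the dual of the representation in (a); and (c) injectivity of $\operatorname{SD}_{e,f}$ for $m\le 5$ via the finite Quot scheme method. The paper says explicitly that upgrading injectivity to isomorphism would require computing the full dimension of $H^0(M(e),\OO(\Theta_f))$, and it does not do so.

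Your plan therefore has a genuine gap at precisely the point the paper identifies as open. You assert that you will ``express $\OO(\Theta_e)$ via \eqref{eq:theta-class} and exhibit an $r$-dimensional subspace $W$'' and then reduce to the inequality $\dim H^0(M(f),\OO(\Theta_e))\le r$; but exhibiting a subspace gives the \emph{opposite} inequality, and nothing in your outline produces an upper bound on the sections of the theta bundle on the higher-rank space. The paper's computation stops at a subspace because the sections are obtained as the kernel of a map \eqref{eq:sec-are-kernel} whose full $SL(n)$-invariant kernel is not determined. Likewise, the Goller--Lin theorem equates $\#\Quot$ with the Hilbert-scheme-side number $h^0((\PP^2)^{[k]},\OO(\Theta_e))$, not with the higher-rank-side number, so the Quot scheme argument yields injectivity only, not the missing upper bound.

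There are also several inaccuracies in the details. In the paper's conventions $f=(1,0,-2)$ is the Hilbert-scheme class and $e$ is the higher-rank class, the reverse of your labeling. The bundle $V$ used in the Quot scheme is a general \emph{stable} bundle with a $(\dagger)$-resolution, not a direct sum of line bundles; and that resolution exists only for $m\le 5$, which is why the injectivity theorem carries that restriction---your outline does not account for this. Finally, the verification that kernels lie in $M(e)$ is not done by stratifying $\Quot$ by the length-two subscheme, but by lifting the quotient $V\twoheadrightarrow I_2$ through the $(\dagger)$-resolution of $V$, analyzing the resulting resolution of the intermediate sheaf $K$, and showing the kernel inherits a $(\star^\vee)$-resolution.
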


\begin{remark}
Strange duality conjectures also exist for abelian and K3 surfaces, but some modifications must be made by fixing a determinant and/or determinant of the Fourier Mukai transform. See for example \cite{marian_sheaves_2008}, \cite{marian_generic_2010}, and \cite{marian_strange_2014}. This is also true of the classical strange duality for curves \cite{marian_level-rank_2007}.
\end{remark}


\subsection{Representations}
Let $X$ be a 3 dimensional vector space and $\PP(X) \cong \PP^2$ be the projective space of lines in $X$. Then $SL(X)$ acts on $\PP(X)$. Then $SL(X)$ also acts on moduli spaces on $\PP(X)$ by pull back. Using the description below \eqref{eq:sd}, we can see that $SD$ is $SL(X)$ equivariant: for $g \in SL(X)$ we have $g(H_{F}) = H_{g*F}$, and $g(\Theta_F) = \Theta_{g^*F}$. Strange duality is viewed as a map of $SL(X)$-modules also in \cite{danila_resultats_2002}. Our goal will be to compute spaces of sections as $SL(X)$ representations.



\subsection{Results}
 We will consider a sequence of examples on $\PP(X) = \PP^2$.
For any $m \ge 1$, let $e=(m+1, 2m+1, -2m-\frac12)$, and $f=(1,0,-2)$. The moduli space $M(f)$ is the Hilbert scheme of $2$ points,  $\PP(X)^{[2]}$. We will discuss this space and its theta bundle in Section \ref{sec:hilb}. 

In Section \ref{sec:kron}, we will address $M(e)$.
In \cite{coskun_effective_2014}, Coskun, Huizenga, and Woolf show how to construct nice resolution for a general semistable sheaf of a fixed Chern character. We recall this construction in Section \ref{sec:beil}. Our class $e$ was chosen so that the resolution constructed this way has a particular form \eqref{eq:Eres}, which then expresses $E$ as a extension \eqref{eq:Eext}. For the space $M(e)$, instead of working with the moduli space of semistable sheaves, we will instead use a moduli space of these extensions. The advantage is that we can see explicitly how this space is built up from $X$. We will argue in \cref{sec:relationship} that this space is birational to $M^{ss}(e)$, the moduli space of Gieseker semistable sheaves.

We will prove (see the next section for notations):

\begin{thm} \mbox{} \label{thm-sec}
	\begin{enumerate}[label=(\alph*)]
		\item
	The space of sections $H^0(\PP(X)^{[2]}, \OO(\Theta_{e}))$ is equal to 
	\[
	  (S^{3m+1,m+1} + S^{3m-1,m+3}  + \dots + S^{2\lceil \frac m2\rceil +m +1,2\lfloor\frac m2\rfloor+m+1}) (X^\vee)
	\]
	as an $SL(X)$-module.
	
	\item
	The space of sections $H^0(M(e), \OO(\Theta_f))$ contains a sub-$SL(X)$-module 
	\[
	 (S^{3m+1,m+1} + S^{3m-1,m+3}  + \dots + S^{2\lceil \frac m2\rceil +m +1,2\lfloor\frac m2\rfloor+m+1}) (X)
	\] \label{thm-sec-kron}
\end{enumerate}
\end{thm}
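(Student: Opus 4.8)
The plan is to prove (a) by rewriting the section space as an explicit $\mathbb Z_2$-eigenspace of sections on $\PP(X)\times\PP(X)$, and to prove (b) by pushing the extension presentation of a general $E\in M(e)$ through the construction of the theta bundle. For (a), since the relevant Picard groups are discrete, $\OO(\Theta_e)$ is independent of $E\in M(e)$, and the determinantal description of the jumping divisor (below \eqref{eq:theta-class}, applied with the universal ideal sheaf on $\PP(X)^{[2]}$) identifies it with $\det(E^{[2]})$, where $E^{[2]}=Rp_*(q^*E\otimes\OO_{\mathcal Z})$ is the tautological bundle of rank $\chi(E)=2m+2$ attached to any $E\in M(e)$ with $H^1(E)=H^2(E)=0$ (the Chern character $e$ is chosen so that $\chi(E)=\operatorname{rk}E^{[2]}$). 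A $K$-theory computation gives $\det(E^{[2]})=\mathcal D\bigl(\OO_{\PP(X)}(2m+1)\bigr)\otimes\OO(-(m+1)\mathcal B)$, with $\mathcal D$ the descent from $\PP(X)\times\PP(X)$ and $\mathcal B=\tfrac12\delta$ the half-boundary. Using $\PP(X)^{[2]}=\widetilde{\PP(X)\times\PP(X)}/\mathbb Z_2$ (blow-up of the diagonal modulo the swap $\sigma$), together with $\pi^*\OO(\mathcal B)=\OO(\widetilde\Delta)$ and the fact that the tautological section of $\OO(\widetilde\Delta)$ is $\sigma$-anti-invariant, one obtains
\[
H^0\!\bigl(\PP(X)^{[2]},\OO(\Theta_e)\bigr)=\Bigl\{\,t\in H^0\!\bigl(\PP(X)\times\PP(X),\OO(2m+1,2m+1)\otimes\mathcal I_\Delta^{\,m+1}\bigr)\ \big|\ \sigma\cdot t=(-1)^{m+1}t\,\Bigr\}.
\]

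To conclude (a), write $a=2m+1$. Then $H^0(\OO(a,a))=S^aX^\vee\otimes S^aX^\vee=\bigoplus_{j=0}^{a}S^{2a-j,\,j}(X^\vee)$, and $\sigma$ acts on the summand $S^{2a-j,\,j}(X^\vee)$ by $(-1)^j$. The $\mathcal I_\Delta$-adic filtration has graded pieces $H^0(\PP(X),S^j\Omega^1_{\PP(X)}(2a))$; twisting the $j$-th symmetric power of the dual Euler sequence identifies this with $S^{2a-j,\,j}(X^\vee)$ for $0\le j\le a$, and reductivity of $SL(X)$ makes the filtration split, so $H^0(\OO(a,a)\otimes\mathcal I_\Delta^{\,k})=\bigoplus_{j=k}^{a}S^{2a-j,\,j}(X^\vee)$. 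Taking $k=m+1$ and intersecting with the $(-1)^{m+1}$-eigenspace of $\sigma$ retains exactly the $j$ with $m+1\le j\le 2m+1$ and $j\equiv m+1\ (2)$, i.e.\ $j=m+1,m+3,\dots,m+1+2\lfloor m/2\rfloor$, which is precisely the stated sum $\bigoplus_{t=0}^{\lfloor m/2\rfloor}S^{\,3m+1-2t,\ m+1+2t}(X^\vee)$.

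For (b), I would start from the extension \eqref{eq:Eext}, say $0\to A\to E\to B\to 0$ with $A,B$ direct sums of twists of the exceptional bundles of Section \ref{sec:beil}, and the universal sheaf $\EE$ on $M(e)\times\PP(X)$ coming from the construction of $M(e)$ in Section \ref{sec:kron}. For a fixed reduced length-$2$ subscheme $Z\subset\PP(X)$ one has $\OO(\Theta_f)=\det\bigl(Rp_*(\EE\otimes q^*I_Z)\bigr)^\vee$; substituting the universal version of \eqref{eq:Eext} and using additivity of $\det Rp_*$ along triangles rewrites $\OO(\Theta_f)$ as an explicit product of the tautological determinant line bundles attached to $A$ and $B$ on the Kronecker-type model of $M(e)$. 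Since that model is built from $X$ by projective-bundle and GIT-quotient steps, these line bundles descend from $SL(X)$-equivariant bundles, and iterated cohomology-and-base-change plus the projection formula exhibit inside $H^0(M(e),\OO(\Theta_f))$ a copy of each $S^{\,3m+1-2t,\ m+1+2t}(X)$ for $0\le t\le\lfloor m/2\rfloor$ --- the occurrence of $X$ rather than $X^\vee$ coming ultimately from $H^0(\PP(X),T_{\PP(X)}(-1))=X$. Equivalently, letting $Z$ vary over $\PP(X)^{[2]}$ realizes these sections as the image of the strange-duality map $\operatorname{SD}_{e,f}$ of \eqref{eq:sd}, whose source is by part (a) the module $\bigoplus_tS^{\,3m+1-2t,\ m+1+2t}(X)$; so producing the submodule amounts to showing this equivariant map is injective, and it does not by itself give surjectivity, which is the subject of the second part of the paper.

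The points needing care in (a) are the vanishing $H^1(\PP(X)\times\PP(X),\OO(a,a)\otimes\mathcal I_\Delta^{\,k})=0$ that makes the filtration split onto its graded pieces (by induction on $k$ with the Bott formula on $\PP^2$) and the correct $\sigma$-eigenvalue bookkeeping for $\OO(-(m+1)\mathcal B)$. The real difficulty is in (b): one must show the kernel and cokernel in the universal \eqref{eq:Eext} are flat over $M(e)$ with vanishing higher direct images, so that the determinant-line-bundle calculus is valid on the (singular, only birational to $M^{ss}(e)$) space $M(e)$, and then carry out the representation-theoretic matching with the target Schur functors --- equivalently, prove that the divisors $\Theta_{I_Z}$, $Z\in\PP(X)^{[2]}$, span a subspace of $H^0(M(e),\OO(\Theta_f))$ of the full expected dimension.
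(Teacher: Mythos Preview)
Your argument for part (a) is correct and takes a genuinely different route from the paper.  The paper views $\PP(X)^{[2]}$ as the $\PP^2$-bundle $\PP(S^2\LL^\vee)\to\PP(X^\vee)$ (two points determine a line), identifies $\OO(\Theta_e)$ in the basis $\langle\OO_{\PP(S^2\LL^\vee)}(1),\pi^*\OO_{\PP(X^\vee)}(1)\rangle$ by an explicit change of coordinates from $\langle H_2,B/2\rangle$, and then computes sections by pushing forward and invoking Littlewood's plethysm $S^d(S^2U)=\bigoplus_{\lambda\vdash d}S^{2\lambda}U$ together with \cref{prop:grass-sec}.  Your approach via the $\mathbb Z_2$-quotient of the blow-up, the filtration by powers of $\mathcal I_\Delta$, and the Cauchy decomposition of $S^aX^\vee\otimes S^aX^\vee$ is equally valid and arguably more portable to other surfaces; note that you do not actually need the $H^1$ vanishing, since knowing the associated graded injects into $\bigoplus_jS^{2a-j,j}(X^\vee)$ and that the total is exactly this sum already forces each graded piece to be full.

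Part (b), however, is not a proof but a sketch that omits precisely the steps where the work lies.  First, a small correction: in the extension \eqref{eq:Eext} the sub is $T(-1)$ and the quotient is the cokernel $F$ of $\OO(-2)^m\to\OO^n$, not direct sums of exceptional bundles.  More seriously, after identifying $\OO(\Theta_f)=\OO_{\PP(\GG)}(1)\otimes\pi^*(\det\SS^\vee)^2$ (which you gesture at), the paper shows the $SL(n)$-invariant sections are the $SL(n)$-invariant part of the kernel of an explicit map
\[
S^{2^{m-1},1}((\CC^n)^\vee\otimes W)\otimes H^0(T(1))^\vee\;\longrightarrow\;S^{2^m}((\CC^n)^\vee\otimes W)\otimes\CC^n\otimes H^0(T(-1))^\vee.
\]
Extracting the $SL(n)$-invariants on each side is a nontrivial Kronecker-coefficient computation (\cref{sln-inv}); one then observes that the positive summands in the formal difference of the two sides must lie in the kernel, reducing to showing that $S^{3m-2k,m+2k}(X)$ occurs in the plethysm $S^{m,m}(S^2X)$ for each $0\le k\le\lfloor m/2\rfloor$, which is \cref{summand} (proved via the de~Boeck--Paget formula).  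None of these steps is visible in your outline, and ``iterated cohomology-and-base-change plus the projection formula'' does not substitute for them.  Your alternative---deduce (b) from injectivity of $SD_{e,f}$---is circular here: the paper proves (b) for all $m\ge1$ by the direct computation just described, whereas injectivity (\cref{thm:inj}) is established only for $m\le5$ and by an entirely separate Quot-scheme argument.
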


The case $m=1$ was given as an example in \cite{johnson_two_2016}. 

In Section \ref{sec:quot}, we recall the ``finite Quot scheme method" of approaching strange duality. This method was first employed by Marian and Oprea in \cite{marian_level-rank_2007} to give a proof of strange duality for curves. Later, Bertram, Goller, and the author used this method to investigate strange duality for surfaces \cite{bertram_potiers_2016}, see also \cite{johnson_universal_2017}.
Using a recent result of Goller and Lin \cite{goller_rank1_2019}, it is now possible to apply this method rigorously. Some work is still required, which we do in \cref{sec:quot}. We obtain: 
\begin{thm} \label{thm:inj}
	Let $e$, $f$ be as above. Then $SD_{e,f}$ is injective for $m \le 5$.
\end{thm}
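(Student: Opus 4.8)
The plan is to deploy the finite Quot scheme method in the form made rigorous by Goller and Lin. The strategy is to produce, for a fixed generic sheaf $F \in M(f)$ (i.e. a general length-$2$ subscheme of $\PP(X)$), a Quot scheme $\Quot$ parametrizing quotients of a suitable trivial-type bundle on $\PP(X)$ with kernel of Chern character $e$ (or its dual twist), and to show that this Quot scheme is finite and reduced of the expected length. The section $\Theta_F \in H^0(M(e), \OO(\Theta_f))$ then vanishes exactly on the image of the finitely many kernels appearing in $\Quot$, and by the Goller--Lin enumerative identification this vanishing locus has the right size. Running this over a generating family of $F$'s shows that the sections $\{\Theta_F\}$ separate points in the relevant sense; combined with \Cref{thm-sec}, which identifies both sides of $SD_{e,f}$ as the \emph{same} $SL(X)$-module up to passing to the dual space, one concludes that the equivariant map $SD_{e,f}$, being nonzero on each isotypic component it must hit, is injective in the stated range.

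Concretely, the steps are as follows. First, recall from \Cref{sec:quot} the setup: choose the ``test'' bundle and the Quot scheme so that a general point of $M(f)$ imposes a transverse condition, and invoke Goller--Lin \cite{goller_rank1_2019} to guarantee that for the rank-one side the Quot scheme is finite of the expected length with the expected (reduced, or at least multiplicity-one) structure. Second — and this is the step requiring genuine work — verify that \emph{every} kernel $K$ arising as the kernel of a quotient in this finite Quot scheme has the exceptional-bundle resolution \eqref{eq:Eres}, hence defines a point of the extension space we use as our model for $M(e)$; this is exactly the hypothesis flagged in the abstract (``we prove that the kernels appearing in the Quot scheme have the appropriate resolution by exceptional bundles''). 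Third, translate the finiteness and length count into the statement that $\Theta_F$ is a nonzero section whose divisor meets a test curve in $M(e)$ in the expected number of points, so that the $\Theta_F$ cannot all lie in a proper subspace; equivalently, the assignment $H_F \mapsto \Theta_F$ has no kernel. Fourth, combine with \Cref{thm-sec}\ref{thm-sec-kron}: since the image of $SD_{e,f}$ lands in the explicitly described submodule $\bigl(S^{3m+1,m+1} + \dots\bigr)(X)$, which is (abstractly) isomorphic as an $SL(X)$-module to $H^0(\PP(X)^{[2]}, \OO(\Theta_e))^\vee$, any nonzero equivariant map is automatically injective on each multiplicity-free isotypic piece, and the Quot-scheme computation rules out the map being zero.

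The main obstacle is the second step: controlling the homological type of the kernels. A priori a kernel $K$ in the Quot scheme is only known to be a semistable sheaf of Chern character $e$ (or torsion-free with the right invariants); one must show that the Coskun--Huizenga--Woolf resolution of such a $K$ degenerates to precisely the shape \eqref{eq:Eres}, with no extra summands or nonminimal terms, so that $K$ actually lies in the extension model of $M(e)$ rather than merely in $M^{ss}(e)$. This requires a vanishing analysis — showing the relevant $\Ext$ and $\Hom$ groups against the exceptional bundles behave as for the generic sheaf — and it is here that the numerical constraint $m \le 5$ is expected to enter: for larger $m$ one presumably loses control of an intermediate cohomology group, or the expected Quot-scheme length ceases to match, so the argument as written only certifies injectivity in that range. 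A secondary, more routine concern is checking that the birational model of \Cref{sec:relationship} does not distort the section spaces (i.e. that $H^0$ of the theta bundle is unchanged), but this should follow from normality and the codimension-$\geq 2$ nature of the exceptional locus.
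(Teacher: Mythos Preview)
Your proposal has the right high-level slogan (``finite Quot scheme method, made rigorous by Goller--Lin, plus check the kernels have the right resolution''), but several concrete pieces are wrong or missing, and the logical route you sketch does not actually reach injectivity.

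\textbf{The Quot scheme setup.} The Quot scheme is not built from a fixed generic $F\in M(f)$ and a ``trivial-type bundle''. One fixes a single general semistable sheaf $V$ with $\ch(V)=e^\vee+f$ and considers $\Quot(V,f)$. Each point is a short exact sequence $0\to\hat E_i\to V\to F_i\to 0$, so the scheme simultaneously produces many $F_i\in M(f)$ and many $\hat E_i$. The finiteness and reducedness come from \cite{bertram_potiers_2016}, Theorem B; the length identification $\#\Quot(V,2)=h^0((\PP^2)^{[2]},\OO(\Theta_e))$ is Goller--Lin.

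\textbf{How injectivity is actually deduced.} The paper does not use \Cref{thm-sec} or any $SL(X)$-isotypic argument here. From the sequences above one checks $\Hom(\hat E_i,F_j)=0$ iff $i=j$; hence the matrix $\bigl(\Theta_{F_i}([\hat E_j^\vee])\bigr)_{i,j}$ is diagonal with nonzero diagonal, so the sections $\Theta_{F_1},\dots,\Theta_{F_N}$ are linearly independent in $H^0(M(e),\OO(\Theta_f))$. Each $\Theta_{F_i}$ lies in the image of $SD_{e,f}$ by construction, so $\operatorname{rank}(SD_{e,f})\ge N=\#\Quot(V,2)=h^0((\PP^2)^{[2]},\OO(\Theta_e))=\dim H^0(M(f),\OO(\Theta_e))^\vee$, which is exactly the dimension of the source. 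Injectivity follows. Your step~(4), invoking multiplicity-freeness of isotypic components via \Cref{thm-sec}\ref{thm-sec-kron}, is neither used nor sufficient: that theorem only produces a \emph{sub}module of the target, so you do not know the target is multiplicity-free, and in any case the paper explicitly cannot upgrade injectivity to isomorphism precisely because $H^0(M(e),\OO(\Theta_f))$ is not fully computed.

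\textbf{Where $m\le 5$ enters.} You locate the constraint in the kernel-resolution analysis; that is not where it comes from. The bound arises because Theorem~B of \cite{bertram_potiers_2016} requires $V$ to have a general \eqref{dag}-resolution, and the Coskun--Huizenga--Woolf machinery applied to $v^\vee$ yields that shape only for $m\le 5$. The subsequent verification that every kernel $\hat E$ has a \eqref{res-d}-resolution (and hence $\hat E^\vee$ defines a point of $\PP(\GG)$), as well as the check that the implicit ``$d\gg 0$'' hypotheses in \cite{bertram_potiers_2016} are met, both take the \eqref{dag}-resolution of $V$ as input but do not themselves impose further restrictions on $m$.
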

 
In order to upgrade this statement to ``isomorphism", one needs to better understand the space of sections $H^0(M(e), \OO(\Theta_f))$. For example, computing its dimension would be sufficient. 

\subsection{Acknowledgments} The author thanks Thomas Goller and Yinbang Lin for helpful discussions during the work on this paper. The author was supported by the Swiss National Science Foundation through grant SNF-200020-18218.

\section{Preliminaries}
\subsection{Notation and Conventions}
We write $\lceil \cdot \rceil$ and $\lfloor \cdot \rfloor$ for the ceiling and floow functions (round up, respectively down, to the nearest integer).

For a vector space $X$, we write $\PP(X) = \operatorname{Proj} (\operatorname{Sym}^{\bullet} X^\vee)$, so $\PP(X)$ parameterizes non-zero points of $X$, up to a scalar, and $H^0(\PP(X),\OO(1)) = X^\vee$.  Similarly, for a vector bundle $\FF$ on a variety $Y$,  the projective bundle $\pi: \PP(\FF) \ra Y$ parameterizes points in the total space of $\FF$, up to the action of $\CC^\times$ on the fibers, and $\pi_* (\OO_{\PP(\FF)}(1)) = \FF^\vee$.

\subsection{Schur functors and partitions}
 A partition is weakly decreasing list of positive integers $\lambda = \lp\lambda_1,\lambda_2,\dots, \lambda_k\rp $.  We use exponents to indicate repeated parts, hence $\lp2^m,1\rp  = \lp2,2,\dots, 2,1\rp $ (with the $2$ repeated $m$ times). We write $\ell(\lambda) = k$ for the \emph{length} of the partition, and $|\lambda| = \sum_i \lambda_i$. If $|\lambda| = n$, we write $\lambda \vdash n$ and say that $\lambda$ is a partition of $n$. We we also use the notation $2\lambda$ for the partition $\lp2\lambda_1, 2\lambda_2, \dots, 2\lambda_k\rp $. It is often convenient to use the convention $\lambda_j =0$ for $j > k$. If $\mu = \lp\mu_1, \dots, \mu_p\rp $, then we define $\lambda + \mu$ to be the partition $\lp\lambda_1 + \mu_1, \lambda_2+\mu_2, \dots, \lambda_q + \mu_q\rp $, where $q = \operatorname{max}(k,p)$.
 
 Representations of $GL(n)$ and $SL(n)$ are classified by Schur functors. Schur functors $S^\lambda$ are functors (indexed by partitions $\lambda$) from the category of vector spaces (or vector bundles) to itself. We usually omit the parentheses when writing explicit partitions in the superscript of $S$.
 
 \begin{thm}
 	Let $U$ be a vector space of dimension $n$. 
 	
 	\begin{enumerate}
 		\item $S^{\lambda}U$ is an irreducible representation of $GL(U)$ and $SL(U)$.
 		\item Any irreducible representation of $GL(U)$ can be written as $S^\lambda U \otimes (\det U^\vee)^c$ for some partition $\lambda$ and integer $c \ge 0$. 
 		\item $S^\lambda U \otimes (\det U^\vee)^c \cong S^\mu U \otimes (\det U^\vee)^d$ as $GL(U)$ representations if and only if $\lambda + \lp d^n\rp  = \mu + \lp c^n\rp $. \label{item:schur-unique}
 		\item Any irreducible representation of $SL(U)$ can be written as $S^\lambda(U)$ for some partition $\lambda$.
 	\end{enumerate}.
\end{thm}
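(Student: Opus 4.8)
The plan is to derive all four parts from the classical highest-weight classification of rational representations of the reductive group $GL(U)$; in the paper one would simply cite Fulton--Harris or Weyman's book, but the argument runs as follows. First recall the construction of the Schur functor: for $\lambda\vdash k$ one realizes $S^\lambda U$ as the image of a Young symmetrizer $c_\lambda\in\CC[\mathfrak{S}_k]$ acting on $U^{\otimes k}$, where $\mathfrak{S}_k$ permutes the tensor factors. This is visibly a $GL(U)$-submodule of $U^{\otimes k}$; computing weights for a diagonal maximal torus shows that it contains a highest weight vector of weight $\lambda$, unique up to scalar, that every weight occurring is $\le\lambda$ in dominance order, that $S^\lambda U=0$ exactly when $\ell(\lambda)>\dim U$, and that otherwise $S^\lambda U$ is a nonzero module with highest weight $\lambda$. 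I take $\ell(\lambda)\le n$ whenever $S^\lambda U$ is meant to be nonzero.

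For part (1) I would invoke Schur--Weyl duality: as a $GL(U)\times\mathfrak{S}_k$-module one has $U^{\otimes k}=\bigoplus_{\lambda\vdash k}S^\lambda U\otimes V_\lambda$ with $V_\lambda$ the Specht module, and the image of $\CC[\mathfrak{S}_k]$ in $\mathrm{End}(U^{\otimes k})$ is precisely the commutant of the $GL(U)$-action; the double-centralizer theorem then forces each isotypic piece $S^\lambda U$ to be $GL(U)$-irreducible. (Alternatively, $S^\lambda U$ is cyclic on its highest weight vector, and a proper submodule would have a strictly smaller highest weight, which is impossible because the weight multiplicities of $S^\lambda U$ are the Kostka numbers $K_{\lambda\mu}$ with $K_{\lambda\lambda}=1$.) To descend to $SL(U)$, write $GL(U)=SL(U)\cdot Z$ with $Z=\CC^\times\cdot\id$ the center; by Schur's lemma $Z$ acts by a scalar on the irreducible $S^\lambda U$, so every $SL(U)$-submodule is automatically $GL(U)$-stable, and $S^\lambda U$ is $SL(U)$-irreducible as well.

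Parts (2) and (3) are then bookkeeping with dominant weights. The irreducible rational $GL(U)$-modules correspond bijectively, via the highest weight, to weakly decreasing integer sequences $\mu=(\mu_1\ge\cdots\ge\mu_n)$. Given such a $\mu$, put $c=\max(0,-\mu_n)\ge0$ and $\lambda_i=\mu_i+c$; then $\lambda$ is a partition with at most $n$ parts, and since $\det U^\vee$ has highest weight $(-1,\dots,-1)$ the irreducible $S^\lambda U\otimes(\det U^\vee)^c$ has highest weight $\lambda-(c^n)=\mu$, which gives (2). For (3), two irreducibles are isomorphic iff their highest weights coincide; as $S^\lambda U\otimes(\det U^\vee)^c$ has highest weight $\lambda-(c^n)$, the isomorphism of (3) is equivalent to $\lambda-(c^n)=\mu-(d^n)$, i.e.\ to $\lambda+(d^n)=\mu+(c^n)$ in the additive notation on partitions fixed earlier.

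For part (4): every irreducible $SL(U)$-module $W$ has an $SL(U)$-dominant highest weight, which lifts to a $GL(U)$-dominant weight (e.g.\ the partition representative of its coset in $\mathbb{Z}^n/\mathbb{Z}(1,\dots,1)$), so $W$ is the restriction of the corresponding $GL(U)$-irreducible; by (2) that module is $S^\lambda U\otimes(\det U^\vee)^c$ for a partition $\lambda$, and since $\det U^\vee$ restricts trivially to $SL(U)$ we conclude $W\cong S^\lambda U$. The only real obstacle is the irreducibility in (1) — that is, Schur--Weyl duality together with the double-centralizer theorem — after which parts (2)--(4) are pure weight combinatorics plus the factorization $GL(U)=SL(U)\cdot Z$; for the purposes of this paper it is cleanest to quote the statement from a standard reference.
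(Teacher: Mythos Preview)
Your proposal is correct and, in fact, provides far more than the paper does: the paper states this theorem as standard background in the preliminaries section and gives no proof or even an explicit reference, treating it as a well-known classification result. Your sketch via Schur--Weyl duality and highest-weight combinatorics is the standard argument, and your own closing remark that ``for the purposes of this paper it is cleanest to quote the statement from a standard reference'' matches exactly what the paper does.
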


In light of \eqref{item:schur-unique} above, if $\lambda = \lp a_1, \dots, a_k\rp $ is a sequence of weakly decreasing integers with $a_k < 0$, it makes sense to  define $S^\lambda U = S^{\lambda + (-a_k)^n} U \otimes (\det U^\vee)^{-a_k}$. (We will not call such a sequence a partition.)

The following are basic facts that we will make use of without further comment.

\begin{thm}
 		Let $\lambda$ and $\mu$ be finite, weakly decreasing sequences of intergers and $U$ be a vector space of dimension $n$.
 		\begin{enumerate}

 		\item If $\ell(\lambda) > n$, then $S^\lambda(U) = 0$.
 		\item $S^\lambda U \otimes (\det U)^c = S^{\lambda + \lp c^n\rp }U$ for any integer $c$.
 		\item $S^\lambda U \cong S^\mu U$ as $SL(U)$ representations if and only if $\lambda = \mu + \lp c^n\rp $ for some integer $c$.
 		\item  $\operatorname{Sym}^k \cong S^k U $.

 		\item $\bigwedge^k U \cong S^{1^k}U $.
 		\item $S^{-1} U = U^\vee$.
 		\item $S^\lambda(U)^\vee = S^\lambda(U^\vee)$.
 		\item $S^\lambda(U \otimes \det U^c) = S^\lambda U \otimes (\det U)^{c|\lambda|}$
 	\end{enumerate}
 
 \end{thm}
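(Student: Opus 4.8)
The plan is to reduce everything to two inputs: the Young-symmetrizer (equivalently, Weyl module) construction of $S^\lambda U$ for an honest partition $\lambda$, and the classification of irreducible $GL(U)$- and $SL(U)$-representations by highest weight recalled in the classification theorem above. Recall that for a partition $\lambda \vdash d$ one realizes $S^\lambda U$ as the image of a Young symmetrizer $c_\lambda \in \CC[S_d]$ acting on $U^{\otimes d}$, and that this is the Weyl module of highest weight $(\lambda_1,\dots,\lambda_n)$ for $GL(U)$. From the definition, items (4) and (5) are immediate: for $\lambda = (k)$ the symmetrizer is the full symmetrization idempotent and for $\lambda = (1^k)$ the full antisymmetrization, giving $S^{(k)}U = \operatorname{Sym}^k U$ and $S^{(1^k)}U = \bigwedge^k U$. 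Item (1) is almost as immediate: if $\ell(\lambda) > n$, then antisymmetrization over the first column of $c_\lambda$ factors through $\bigwedge^{\ell(\lambda)}U = 0$, so $S^\lambda U = 0$ (equivalently, $\lambda$ is not a dominant weight of $GL_n$ and Weyl's construction returns zero). For item (7), the evaluation pairing gives an $S_d$-equivariant identification $(U^\vee)^{\otimes d} \cong (U^{\otimes d})^\vee$, and $c_\lambda$ is self-adjoint up to a nonzero scalar for it, so the image of $c_\lambda$ on $(U^\vee)^{\otimes d}$ is the dual of its image on $U^{\otimes d}$, i.e. $S^\lambda(U^\vee) = (S^\lambda U)^\vee$. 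For item (8), a Schur functor attached to a partition of $d$ is built from $U^{\otimes d}$, so replacing $U$ by $U \otimes L$ with $L$ one-dimensional multiplies the output by $L^{\otimes d}$; taking $L = (\det U)^c$ and $d = |\lambda|$ yields $S^\lambda(U \otimes (\det U)^c) = S^\lambda U \otimes (\det U)^{c|\lambda|}$.

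For item (2) with $c \ge 0$ it suffices by induction to treat $c = 1$: $\det U = \bigwedge^n U$ is one-dimensional, so $S^\lambda U \otimes \det U$ is irreducible of the same dimension as $S^\lambda U$ with highest weight shifted by the weight $(1^n)$ of $\det U$, hence equals $S^{\lambda + (1^n)}U$. It then remains to pass from partitions to arbitrary weakly decreasing integer sequences, and this bookkeeping is the only place any care is needed. Given such a sequence $\lambda$ (of length $n$ after padding with zeros by the usual convention), one picks $c \gg 0$ with $\lambda + (c^n)$ a partition and sets $S^\lambda U := S^{\lambda + (c^n)}U \otimes (\det U)^{-c}$; the $c \ge 0$ case already proved shows this is independent of the chosen shift, which simultaneously gives item (2) for all $c \in \mathbb{Z}$ and extends items (7) and (8) verbatim (using $\det(U^\vee) = (\det U)^\vee$ and tracking the exponents through the shift). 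Item (6) is then just the value of this convention on the sequence with a single $-1$ in the last slot: $S^{-1}U = \bigwedge^{n-1}U \otimes (\det U)^{-1}$, and the perfect pairing $\bigwedge^{n-1}U \otimes U \to \bigwedge^n U = \det U$ identifies $\bigwedge^{n-1}U$ with $U^\vee \otimes \det U$, so $S^{-1}U \cong U^\vee$.

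Finally, for item (3): both $S^\lambda U$ and $S^\mu U$ are irreducible over $GL(U)$ by the classification theorem above, hence also over $SL(U)$, since the central scalars act by a character and so any $SL(U)$-subrepresentation is automatically $GL(U)$-stable. The ``if'' direction is immediate from item (2), because $\det U$ restricts to the trivial $SL(U)$-representation. For the ``only if'' direction, an $SL(U)$-isomorphism $S^\lambda U \cong S^\mu U$ forces equality of their highest weights as $SL(U)$-weights, i.e. equality of $\lambda$ and $\mu$ in $\mathbb{Z}^n / \mathbb{Z}\cdot(1^n)$, which is exactly the condition $\lambda = \mu + (c^n)$ for some $c \in \mathbb{Z}$. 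The only obstacle worth naming is organizational: making the extension of the symbol $S^\lambda$ to non-partition sequences unambiguous and checking that items (2), (6), (7), (8) are stable under the shift used to define it; accordingly I would prove items (1), (4), (5), (7), (8) for genuine partitions first and then dispatch the bootstrapping together with items (3) and (6) in a single short paragraph.
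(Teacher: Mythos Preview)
The paper does not prove this theorem at all: it is introduced with the sentence ``The following are basic facts that we will make use of without further comment'' and no proof is given. So there is no approach to compare against; you have supplied a proof where the paper deliberately omits one.

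Your argument is essentially correct and well organized. The reduction to the Young-symmetrizer/Weyl-module construction for genuine partitions, followed by the bookkeeping extension to arbitrary weakly decreasing sequences via the shift by $(c^n)$, is the standard route and handles items (1), (2), (4), (5), (6), (8) cleanly. For item (3) the highest-weight comparison modulo $\mathbb{Z}\cdot(1^n)$ is exactly right. The only place I would tighten the exposition is item (7): the claim that $c_\lambda$ is ``self-adjoint up to a nonzero scalar'' is not literally true (the adjoint of $\sigma$ under the natural pairing is $\sigma^{-1}$, so $c_\lambda^*$ is the Young symmetrizer with the opposite row/column convention), though its image is still an irreducible $GL$-module with the same character, so the conclusion stands. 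A cleaner one-line argument is simply that both $(S^\lambda U)^\vee$ and $S^\lambda(U^\vee)$ have character $s_\lambda(x_1^{-1},\dots,x_n^{-1})$, hence are isomorphic.
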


\subsection{The Littlewood-Richardson Rule}
If $\mu$ and $\lambda$ are partitions, then the tensor product $S^\mu U \otimes S^\lambda U $ can be decomposed as $\bigoplus_{\nu} S^\nu U ^{\oplus c^\nu_{\mu,\lambda}}$, where the sum is over $\nu \vdash (|\lambda| + |\mu|)$. The coefficients $c^\nu_{\mu,\lambda}$ have a combinatorial description known as the Littlewood-Richardson rule, and are referred to as Littlewood-Richardson coefficients.

We will use straightforward applications of this rule without further comment, however, we will need to make some non-trivial calculations in the proof of \cref{summand}. Instead of recalling the definitions and the rule, we refer the reader to any standard source, for example \cite{leeuwen_littlewood_1999}.

\subsection{Schur functions} \label{sec:schur-functions}
The following will be used in the proof of \cref{sln-inv}.

Given a sequence of variables $x=(x_1, \dots, x_k)$, the Schur functions $s_\lambda(x)$  form a (linear) basis for the symmetric polynomials in $x_1, \dots, x_k$. They are labeled by partitions, just as the Schur functors $S^\lambda$. A semistandard Young tableau of shape $\lambda$ with alphabet $x$ is an table of a certain type with entries from $x$, satisfying certain rules. Its \emph{content} is the product of its entries. The Schur function $s_\lambda(x)$ can be defined as the sum of the contents of all semistandard Young tableaux of shape $\lambda$. We refer the reader to any standard source for full definitions.

 If $U$ has dimension $k$, then the character of $S^\lambda(U)$ is $s_\lambda(x_1, \dots, x_k)$. That is, after picking a basis, the  trace of the action on $S^\lambda(U)$ by the element $\operatorname{diag}(x_1, \dots, x_k) \in GL(U)$ is $s_\lambda(x)$. In fact, the character determines the representation. Hence, the Schur functions obey the same Littlewood-Richardson rule: $s_\mu(x) s_\lambda(x) = \sum_\nu c^\nu_{\mu, \lambda} s_\nu(x)$.  

We also mention the monomial basis for the symmetric functions. Given a partition $\lambda = \lp a_1, \dots, a_k\rp $ (padded with 0's if necessary), let 
\[
m_\lambda(x) = \sum_{\sigma \in \Sigma_k}  \prod_i x_{\sigma(i)}^{a_i} 
\]
Here $\Sigma_k$ is the symmetric group on $1, \dots, k$. Given an explicit symmetric polynomial, it is easy to write it as a sum of monomial symmetric polynomials.

Partitions can be ordered lexicographically, which we call the weight. If one expresses $s_\lambda(x)$ as a linear combination of terms of $m_\alpha(x)$, the highest weight term will be $\alpha = \lambda$.

This leads to the following algorithm. Suppose one is given a symmetric polynomial $f(x)$, expressed as a sum of monomial symmetric functions, and wishes to express it as a sum of Schur  functions. Find the highest weight partition $\alpha$ in $f(x)$ with non-zero coefficient $b_\alpha$. Then $f(x) - b_\alpha s_\alpha(x)$ is a symmetric function with smaller highest weight. One can continue inductively.



\subsection{Sections of Vector Bundles on Grassmannians}
Finally, we recall this very useful result.

\begin{prop}[\cite{edidin_grassmannians_2009}] \label{prop:grass-sec}
	Let $\Gr(k,U)$ be the Grassmannian of $k$-dimensional subspaces of an $n$-dimensional vector space $U$. Let $\SS$ and $\QQ$ be the universal sub- and quotient bundles, respectively. 
	
	For any partition $\lambda$ with $n-k$ or fewer parts,  we have $H^0(\Gr(k,U), S^\lambda(\QQ)) = S^\lambda U$, and the higher cohomology vanishes. If $\lambda$ is a sequence with negative entries, then $H^0(\Gr(U,k), S^\lambda(\QQ)) = 0$.
	
	For any partition $\lambda$ with $k$ or fewer parts, we have $H^0(\Gr(U,k), S^\lambda(\SS^\vee)) = S^\lambda U^\vee$, and the higher cohomology vanishes. If $\lambda$ is a sequence with negative entries, then $H^0(\Gr(U,k), S^\lambda(\SS^\vee)) = 0$.

\end{prop}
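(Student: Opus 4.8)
The plan is to reduce both statements to the Borel--Weil--Bott theorem. Write $n = \dim U$; the degenerate cases $k=0,n$ are trivial, so assume $0 < k < n$. I would realize $\Gr(k,U) = GL(U)/P$ with $P$ the parabolic stabilizing a fixed $k$-dimensional subspace $V_0 \subset U$; its Levi quotient $GL(U/V_0) \times GL(V_0)$ acts on the fibers of $\QQ$ and of $\SS$ through its two factors. Hence $S^\lambda(\QQ)$ is the homogeneous vector bundle induced from the irreducible $P$-representation $S^\lambda(U/V_0)$, and $S^\mu(\SS^\vee)$ is the one induced from $S^\mu(V_0^\vee)$. Borel--Weil--Bott computes the cohomology of such a bundle from its associated $GL(U)$-weight: if the weight is dominant, the cohomology is concentrated in degree $0$ and equals the corresponding irreducible $GL(U)$-module; if it is non-dominant, then $H^0 = 0$ (and the total cohomology either vanishes or sits in positive degrees).

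Next I would do the weight bookkeeping. Ordering the coordinates so that $U/V_0$ occupies the first $n-k$ slots and $V_0$ the last $k$, the bundle $S^\lambda(\QQ)$ carries the weight $(\lambda_1, \dots, \lambda_{n-k}, 0, \dots, 0)$, and $S^\mu(\SS^\vee)$ the weight $(0, \dots, 0, -\mu_k, -\mu_{k-1}, \dots, -\mu_1)$. If $\lambda$ is a genuine partition with at most $n-k$ parts, the first of these is weakly decreasing, hence $GL(U)$-dominant, so $H^i = 0$ for $i>0$ and $H^0 = S^\lambda(U)$ --- one pins down the Schur functor itself, rather than its dual, via the rank-one check $\Gr(1,U) = \PP(U)$, where $\QQ$ has $H^0 = U$. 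Dually, if $\mu$ is a partition with at most $k$ parts, $(0^{n-k}, -\mu_k, \dots, -\mu_1)$ is dominant and is precisely the highest weight of $S^\mu(U)^\vee = S^\mu(U^\vee)$, giving $H^0(S^\mu(\SS^\vee)) = S^\mu(U^\vee)$ with no higher cohomology. (Alternatively the $\SS^\vee$-statement follows from the $\QQ$-statement through the isomorphism $\Gr(k,U) \cong \Gr(n-k,U^\vee)$, under which $\SS^\vee$ becomes the universal quotient bundle.) Finally, for a sequence $\lambda$ with a strictly negative last entry, the associated weight is a weakly decreasing sequence followed by a block of $0$'s, which is no longer weakly decreasing --- hence non-dominant --- and Bott forces $H^0 = 0$; the same applies to $S^\mu(\SS^\vee)$ when $\mu$ has a negative entry; and if $\lambda$ is an honest partition with more than $n-k$ parts then $S^\lambda(\QQ) = 0$ already as a bundle.

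I expect the only real friction to be the convention-matching: fixing the order of the two coordinate blocks, and the highest- versus lowest-weight normalization in Borel--Weil, so that the answers come out as $S^\lambda(U)$ and $S^\mu(U^\vee)$ in this paper's conventions (where $\PP(X)$ is the space of lines in $X$ and $H^0(\OO(1)) = X^\vee$), and not as a $\det$-twist or a dual; the rank-one checks on $\PP(U)$ and $\PP(U^\vee)$ settle this unambiguously. If one prefers an elementary argument avoiding Bott's theorem --- presumably the point of \cite{edidin_grassmannians_2009} --- the alternative is induction along the complete flag variety $\mathrm{Fl}(U) \xrightarrow{\pi} \Gr(k,U)$, realized as an iterated projective bundle: one computes $R\pi_*$ step by step from the projective-bundle formula and the tautological sequences, using the splitting of $S^\lambda$ of $\pi^*\QQ$ (resp.\ of $\pi^*\SS$) along the induced filtration into line bundles. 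There the actual work is the Pieri/Littlewood--Richardson bookkeeping of those splittings, but it is routine.
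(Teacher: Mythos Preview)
The paper does not give its own proof of this proposition: it is stated with an attribution to \cite{edidin_grassmannians_2009} and used as a black box, so there is nothing in the paper to compare your argument against. Your reduction to Borel--Weil--Bott is the standard route and is correct; the weight bookkeeping and the rank-one sanity checks you sketch are exactly what is needed to pin down the conventions, and your handling of the ``negative entry'' case (non-dominant weight forces $H^0=0$) is fine.
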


\begin{remark}
	The Pl\"{u}cker line bundle $\OO_{\Gr(U,k)}(1)$ is $\bigwedge^k \QQ$, or in terms of $\SS$,  this is isomorphic to $\det U \otimes \bigwedge^{n-k} \SS^\vee$. \cref{prop:grass-sec} says that it has sections $\bigwedge^k U$. 
\end{remark}

\section{The Hilbert Scheme} \label{sec:hilb}

When $f = 1 - 2h^2$, then $M(f) = \PP(X)^{[2]}$, the Hilbert scheme parameterizing dimension 0, length 2 subschemes (or rather, their ideal sheaves).

A length 2 subscheme of $\PP(X)$ determines a line, and hence we have a map $\PP(X)^{[2]} \rightarrow \PP(X^\vee)$. A fiber of this map over a point $[\ell]$ is simply the Hilbert scheme of the line $\ell$, which is isomorphic to $\PP(S^2\ell^\vee)$ via the map that takes a degree two polynomial on $\ell$ to its roots. So we can view $\PP(X)^{[2]}$ as a projective bundle over $\PP(X^\vee)$. Let $\LL$ be the (cone over the) universal line over $\PP(X^\vee)$, where $\PP(X^\vee)$ is viewed as the moduli space of lines in $\PP(X)$. So we have $\PP(X)^{[2]} \cong \PP(S^2\LL^\vee)$.

We will be interested in computing the spaces of sections of line bundles on $\PP(S^2\LL^\vee)$. 
The group of line bundles is generated by the Serre twisting sheaf $\OO_{\PP(S^2 \LL^\vee)}(1)$ of the projective bundle, and the pullback of the Serre twisting sheaf from the base $\pi^*\OO_{\PP(X^\vee)}(1)$. 
\begin{lem} \label{lem:Gsec}
	The space of sections \[
	\Gamma_{d,f} := H^0(\PP(S^2 \LL^\vee), \OO_{\PP(S^2 \LL^\vee)}(d) \otimes \pi^*\OO_{\PP(X^\vee)}(f))
	\] is calculated as follows:
	\begin{enumerate}
	
	\item If $f$ is even, and $d \le f \le 2d$, then
	\[
	 \Gamma_{d,f} = \sum_{\substack{\lambda \vdash (f-d) \\ \ell(\lambda)\le2}} S^{2\lambda}(X^\vee).
	\] \label{1a}
	
	\item If  $f$ is odd, and $d+1 \le f \le 2d-1$, then 
	\[
	\Gamma_{d,f} = \sum_{\substack{\lambda \vdash (f-d-1) \\ \ell(\lambda)\le2}} S^{2\lambda + \lp 1,1\rp }(X^\vee).
	\] \label{1b}
	
	\item If $f \ge 2d$, then
	\[
	\Gamma_{d,f} = \sum_{\substack{\lambda \vdash d \\\ell(\lambda)\le2} } S^{2\lambda + \lp f-2d,f-2d\rp }(X^\vee).
	\] \label{1c}
	
	\item If $f < d$, then $\Gamma_{d,f} = 0$. \label{1d}
\end{enumerate}
\end{lem}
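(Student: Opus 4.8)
The plan is to compute the pushforward $\pi_*\bigl(\OO_{\PP(S^2\LL^\vee)}(d)\bigr)$ along the projective bundle $\pi\colon \PP(S^2\LL^\vee)\to\PP(X^\vee)$, twist by $\OO_{\PP(X^\vee)}(f)$, and then take global sections on $\PP(X^\vee)\cong\PP^2$ using \cref{prop:grass-sec} (with $k=1$, i.e.\ the Grassmannian of lines in the three-dimensional space $X^\vee$, so that $\LL$ plays the role of $\SS$ and the quotient has rank $2$). By the standard formula for projective bundles, $\pi_*\OO_{\PP(S^2\LL^\vee)}(d) = \operatorname{Sym}^d(S^2\LL^\vee)^\vee{}^{\vee}$; more carefully, with the conventions fixed in the Notation section ($\pi_*\OO_{\PP(\FF)}(1)=\FF^\vee$), we get $\pi_*\OO_{\PP(S^2\LL^\vee)}(d) = \operatorname{Sym}^d\bigl((S^2\LL^\vee)^\vee\bigr)^{\vee} = \operatorname{Sym}^d(S^2\LL)$ for $d\ge 0$ and $0$ for $d<0$, with all higher pushforwards vanishing. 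Thus $\Gamma_{d,f} = H^0\bigl(\PP(X^\vee),\ \operatorname{Sym}^d(S^2\LL)\otimes\OO_{\PP(X^\vee)}(f)\bigr)$, and since $\OO_{\PP(X^\vee)}(f)$ here is a power of the Pl\"ucker bundle on $\Gr(1,X^\vee)$, this is $H^0$ of a Schur-type bundle built from $\LL$ on the Grassmannian of lines.

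The first genuine computation is the plethysm $\operatorname{Sym}^d(S^2 W)$ for a two-dimensional vector space $W$ (here $W=\LL$, the rank-$2$ bundle). This is the classical decomposition $\operatorname{Sym}^d(S^2 W)=\bigoplus_{j=0}^{\lfloor d/2\rfloor} S^{2d-2j,\,2j}(W)$, i.e.\ a sum of the $GL_2$-irreducibles $S^{2\lambda}(W)$ over partitions $\lambda=(d-j,j)\vdash d$ with at most two parts and both parts even; equivalently $\operatorname{Sym}^d(S^2 W)=\bigoplus_{\lambda\vdash d,\ \ell(\lambda)\le 2}\det(W)^{\,?}\cdot\,$— I will write it uniformly as $\bigoplus_{\lambda\vdash d,\ \ell(\lambda)\le2} S^{2\lambda}(W)$ after accounting for the $\det$ twist coming from $(S^2\LL)$ versus $(S^2\LL^\vee)$. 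Next I would twist each summand by $\OO(f)=(\det\LL^\vee)^{\otimes f}$ (using the remark that the Pl\"ucker bundle on $\Gr(1,X^\vee)$ is $\det\LL^\vee$ up to the constant $\det X^\vee$, which I will track carefully), turning $S^{2\lambda}(\LL)\otimes\OO(f)$ into $S^{2\lambda - (c^2)}(\LL)$ for the appropriate shift $c$ depending on $f$. Finally, \cref{prop:grass-sec} applied to $\SS=\LL$ on $\Gr(1,X^\vee)$ says $H^0\bigl(\Gr(1,X^\vee),S^\nu(\LL^\vee)\bigr)=S^\nu(X^\vee)^\vee=S^\nu(X^{\vee\vee})$— I must be careful whether I land on $\SS$ or $\SS^\vee$; the point is that the only surviving summands are those whose associated sequence $\nu$ is a genuine partition (no negative entries), and for those $H^0$ equals $S^\nu$ of the ambient space, while sequences with a negative entry contribute $0$. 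The four cases (a)--(d) then fall out from a bookkeeping of when $2\lambda$ shifted by the $f$-twist remains a partition with at most two parts: case (d) $f<d$ kills everything (the shift is too negative), case (c) $f\ge 2d$ is where every $\lambda\vdash d$ survives but gets a uniform extra rectangle $(f-2d,f-2d)$, and cases (a),(b) are the intermediate ranges where exactly the $\lambda\vdash(f-d)$ (resp.\ $\lambda\vdash(f-d-1)$ with an extra $(1,1)$ from parity) survive.

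The main obstacle I anticipate is purely one of \emph{conventions and bookkeeping}: getting the duals and the $\det$-twists exactly right so that the final answer is stated in terms of $S^\bullet(X^\vee)$ rather than $S^\bullet(X)$, and correctly identifying which of the two halves of \cref{prop:grass-sec} applies (the $\SS^\vee$ statement, since $\LL$ is the sub-bundle and we want sections in $X^\vee$). In particular one must reconcile: (i) the projective-bundle convention $\PP(\FF)$ with $\pi_*\OO(1)=\FF^\vee$, which forces a dualization in passing from $S^2\LL^\vee$ to its pushforward; (ii) the identification of $\PP(X^\vee)$ with $\Gr(1,X^\vee)$ and hence $\LL$ with the tautological \emph{sub}bundle $\SS$; and (iii) the normalization of $\OO_{\PP(X^\vee)}(1)$ relative to $\det\LL^\vee$. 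Once these are pinned down, the plethysm and the Grassmannian cohomology are standard, and the case analysis is a finite check on which two-part partitions remain nonnegative after the shift. The parity split between cases (a) and (b) is exactly the statement that $\operatorname{Sym}^d(S^2 W)$ only ever produces partitions $2\lambda$ with all parts even, so an odd twist $f$ must borrow a $(1,1)$, forcing $f-d-1$ (rather than $f-d$) to be the quantity that is partitioned.
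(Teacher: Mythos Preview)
Your approach is correct and is essentially the same as the paper's: push forward along $\pi$ to get $\operatorname{Sym}^d(S^2\LL)$, apply the Littlewood plethysm $S^d(S^2\,\cdot)=\bigoplus_{\lambda\vdash d}S^{2\lambda}(\cdot)$, absorb the $\OO_{\PP(X^\vee)}(f)$ twist as a power of $\det\LL^\vee$, and then invoke \cref{prop:grass-sec} on $\Gr(2,X)\cong\PP(X^\vee)$ with $\SS=\LL$, followed by the bookkeeping of which shifted partitions stay nonnegative.

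The one place where the paper is crisper than your outline is the identification you flag as obstacle (iii): the paper simply notes $\LL=\Omega_{\PP(X^\vee)}(1)$ from the Euler sequence, whence $\det\LL=\omega_{\PP(X^\vee)}(2)=\OO_{\PP(X^\vee)}(-1)$ on the nose (no stray $\det X^\vee$ factor), and then uses the rank-$2$ identity $S^2\LL=S^2\LL^\vee\otimes(\det\LL)^{\otimes 2}$ to convert $S^d(S^2\LL)\otimes\OO(f)$ directly into $\bigoplus_{\lambda\vdash d}S^{2\lambda+(f-2d,f-2d)}(\LL^\vee)$. With that in hand, the $\SS^\vee$ half of \cref{prop:grass-sec} applies immediately and yields $S^{\nu}(X^\vee)$ for each surviving $\nu$, which resolves all of your dual/sign worries at once.
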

In the proof, we will need the following, which is due to Littlewood. 
\begin{prop}
	Let $U$ be a vector space or a vector bundle. Then
	\[
	S^{m}(S^2(U)) = \bigoplus_{\lambda \vdash m} S^{2\lambda}(U).
	\]
	where the sum is over partitions $\lambda$ of $m$ with length less than or equal to the dimension of $U$.
\end{prop}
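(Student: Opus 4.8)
The plan is to prove the isomorphism at the level of $GL(U)$-characters, which suffices because, as recalled in \cref{sec:schur-functions}, a polynomial representation is determined by its character. Fix a basis of $U$ and let the maximal torus act through $\operatorname{diag}(x_1,\dots,x_n)$, $n=\dim U$. The weights of the torus on $S^2(U)$ are the products $x_ix_j$ with $1\le i\le j\le n$, so the character of $S^m(S^2U)$ is the complete homogeneous symmetric polynomial $h_m$ evaluated on the alphabet $\{x_ix_j : 1\le i\le j\le n\}$; denote it $\phi_m(x)$. On the other hand, the character of $\bigoplus_{\lambda\vdash m}S^{2\lambda}(U)$ is $\sum_{\lambda\vdash m}s_{2\lambda}(x_1,\dots,x_n)$, where the terms with $\ell(\lambda)>n$ vanish automatically, so the length restriction in the statement is built in. Thus the proposition is equivalent to the symmetric-function identity $\phi_m(x)=\sum_{\lambda\vdash m}s_{2\lambda}(x_1,\dots,x_n)$, i.e.\ to the evaluation of the plethysm $h_m[h_2]$.

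I would prove all of these at once by passing to generating functions. Since $\sum_{m\ge0}h_m(y)=\prod_k(1-y_k)^{-1}$ for any alphabet $y$, summing over $m$ gives
\[
\sum_{m\ge0}\phi_m(x)=\prod_{1\le i\le j\le n}\frac{1}{1-x_ix_j}.
\]
At this point I would invoke the classical identity of Littlewood,
\[
\prod_{1\le i\le j\le n}\frac{1}{1-x_ix_j}=\sum_{\ell(\lambda)\le n}s_{2\lambda}(x_1,\dots,x_n),
\]
the sum running over all partitions $\lambda$ with at most $n$ parts. Comparing the homogeneous components of degree $2m$ on the two sides---and using once more that $s_{2\lambda}=0$ as soon as $\ell(\lambda)>n$---gives exactly $\phi_m(x)=\sum_{\lambda\vdash m}s_{2\lambda}(x)$, hence $S^m(S^2U)\cong\bigoplus_{\lambda\vdash m,\ \ell(\lambda)\le n}S^{2\lambda}(U)$ as $GL(U)$-modules. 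If a self-contained argument for the Littlewood identity is wanted, one can run the symmetric Robinson--Schensted--Knuth correspondence: after the change of variables pairing $x_ix_j$ $(i\le j)$ with symmetric nonnegative-integer matrices having even diagonal, symmetric RSK collapses the weighted count to $\sum_\lambda s_{2\lambda}$; the one delicate point is the bookkeeping on the diagonal entries, and it is precisely that which forces the partitions to have \emph{even parts}.

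The version for a vector bundle $U$ then follows formally: both $S^m\circ S^2$ and $\bigoplus_{\lambda\vdash m}S^{2\lambda}$ are polynomial functors built out of tensor, symmetric, and exterior powers, all of which are compatible with pullback, so a natural isomorphism over all finite-dimensional vector spaces extends to one over all locally free sheaves (for instance via the splitting principle). I expect the genuine obstacle to be the Littlewood identity itself---getting both the even-parts condition and the exact length restriction right---while the reduction to it is a routine manipulation of characters.
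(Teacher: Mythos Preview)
Your argument is correct: reducing to characters, recognizing the character of $S^m(S^2U)$ as $h_m$ on the alphabet $\{x_ix_j:i\le j\}$, packaging all $m$ at once into the product $\prod_{i\le j}(1-x_ix_j)^{-1}$, and then invoking Littlewood's identity is exactly the standard route, and your degree-matching and length-restriction remarks are accurate.

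There is, however, nothing to compare against in the paper. The proposition is stated there without proof, attributed simply to Littlewood; it is used as a black box in the proof of \cref{lem:Gsec}. So you have supplied strictly more than the paper does. One small caution on your optional RSK sketch: the correspondence between the diagonal bookkeeping and the parity condition is easy to get backwards (symmetric RSK most directly relates the trace of the matrix to the number of \emph{odd-length columns} of the shape, which gives the companion identity $\prod_{i<j}(1-x_ix_j)^{-1}=\sum_{\lambda'\ \mathrm{even}}s_\lambda$), so if you do want a fully self-contained bijective proof of the even-\emph{rows} identity you would need to be a bit more careful than your parenthetical suggests. Since you already flag this as the delicate point and your main argument simply cites the identity, this does not affect the validity of your proof.
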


\begin{proof}[Proof of Lemma \ref{lem:Gsec}]
First, from the Euler sequence we see that $\LL = \Omega_{\PP(X^\vee)}(1)$, hence $\det \LL  =\omega_{\PP(X^\vee)} (2)=\OO_{\PP(X^\vee)}(-1)$. We also have $S^2\LL  = S^2\LL^\vee \otimes (\det \LL)^{\otimes 2}$. We then calculate the pushforward:
\begin{align*}
\pi_*(\OO_{\PP(S^2\LL^\vee)}(d) \otimes \pi^*\OO_{\PP(X^\vee)}(f)) &= S^d(S^2 \LL) \otimes \OO_{\PP(X^\vee)}(f)\\
&=S^d(S^2\LL^\vee \otimes (\det \LL)^{\otimes 2}) \otimes \det (\LL^\vee)^f \\
&= S^d(S^2 \LL^\vee) \otimes \det(\LL^\vee)^{f-2d} \\
&= \bigoplus_{\lambda \vdash d} S^{2\lambda + \lp f-2d, f-2d\rp } \LL^\vee
\end{align*} 
Now \eqref{1c} and \eqref{1d} follow from Proposition \ref{prop:grass-sec}.

If $f=2k$ is even  this is equal to
\[
 \bigoplus_{\lambda \vdash d} S^{2(\lambda + \lp k-d, k-d\rp )} \LL^\vee
\]
If $d \le f \le 2d$, then $k-d$ is negative. Notice that if you take the set of partitions of $d$ of length at most 2, add $\lp k-d,k-d\rp $ to each one, and throw away those with negative entries, you get precisely the partitions of $f-d$ of length at most 2. So the \eqref{1a} follows. Part \eqref{1b} follows similarly.
\end{proof}

In order to avoid having to identify the universal bundle on $\PP(S^2 \LL^\vee)$, we recall the identification of the theta bundle on the Hilbert scheme in \cite{bertram_potiers_2016}. The Picard group of the Hilbert scheme of $n$ points is generated by divisors $H_n$ and $\frac{B}{2}$, where $H_n$ is the locus of subschemes meeting a fixed hyperplane, and $B$ is the locus of non-reduced subschemes (in other words, where some of the points have collided).

We then have 
\begin{prop}
	Let $e = (r,c, s)$, where $s$ is chosen so that $\chi(e, (1,0-n)) = 0$. Then the theta divisor on $\PP(X)^{[n]}$ is 
	\[
	\Theta_e = c \cdot H_n - r \frac{B}{2}.
	\]
\end{prop}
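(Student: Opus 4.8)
The goal is to identify the theta divisor $\Theta_e$ on $\PP(X)^{[n]}$, where $e = (r, c, s)$ is a Chern character orthogonal to $(1, 0, -n)$, as the explicit divisor $c \cdot H_n - r\frac{B}{2}$. My plan is to compute the class \eqref{eq:theta-class}, namely $\det(Rp_*(\EE \otimes q^* F))^\vee$, where $\EE$ is the universal ideal sheaf on $\PP(X)^{[n]} \times \PP(X)$, $F$ is a fixed sheaf of Chern character $e$, and $p, q$ are the projections. Since the answer is a divisor class and the Picard group is generated by $H_n$ and $\frac{B}{2}$, it suffices to write $\Theta_e = \alpha H_n + \beta \frac{B}{2}$ and pin down $\alpha, \beta$ by two independent computations, for instance by restricting to two test curves in $\PP(X)^{[n]}$ whose intersection numbers with $H_n$ and $B$ are known.

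\textbf{Key steps.} First I would recall (from \cite{bertram_potiers_2016} or by a Grothendieck--Riemann--Roch computation) that for $F$ of Chern character $e = (r, c, s)$, the determinant line bundle $\lambda_{\EE}(e) := \det(Rp_*(\EE \otimes q^* F))$ depends only on $e$ through a linear expression in the natural generators of $\Pic$, so that $\Theta_e$ is linear in $(r, c, s)$; the orthogonality condition $\chi(e \cdot (1,0,-n)) = 0$ (which fixes $s$ in terms of $r, c, n$) guarantees that the rank of $Rp_*(\EE \otimes q^* F)$ is zero, so the determinant is genuinely a line bundle with a canonical section vanishing on $\Theta_F$. Second, by linearity it is enough to treat the two cases $e = (0, 1, \ast)$ and $e = (1, 0, -n)$ (or $e=(1,0,\ast)$ — but orthogonality forces the second to be exactly $(1,0,-n)$, which pairs to zero with itself and hence gives the trivial bundle, consistent with the coefficient of $H_n$ being $r = 1$ times... — careful: I should instead use $e=(0,1,\ast)$ and one more independent class such as $e = (2, 0, \ast)$ or $e = (1, 1, \ast)$). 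Third, for each test Chern character I would restrict to a test curve: the curve $C_1$ sweeping a point along a line while holding the other $n-1$ points fixed (so $C_1 \cdot H_n = 1$, $C_1 \cdot B = 0$) detects the coefficient of $H_n$, and the curve $C_2$ inside a single fiber of $\PP(X)^{[n]} \to \cdots$ consisting of a length-two subscheme degenerating (so $C_2 \cdot B \neq 0$, $C_2 \cdot H_n = 0$) detects the coefficient of $\frac{B}{2}$; on each such curve $Rp_*(\EE \otimes q^* F)$ becomes computable directly via cohomology and base change, and I read off the degree of the determinant.

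\textbf{Alternative and main obstacle.} Alternatively, rather than test curves, one can compute $c_1(\lambda_{\EE}(e))$ directly by Grothendieck--Riemann--Roch: $\ch(Rp_*(\EE \otimes q^* F)) = p_*(\ch(\EE) \cdot q^* \ch(F) \cdot q^* \todd(\PP^2))$, and then extract the degree-one part, using the known expressions for $\ch(\EE)$ in terms of $H_n$, $B$, and the tautological classes on $\PP(X)^{[n]} \times \PP(X)$. I expect the main obstacle to be bookkeeping: correctly expressing the Chern character of the universal ideal sheaf $\EE$ (equivalently the universal quotient $\OO/\EE$, supported on the universal subscheme $Z \subset \PP(X)^{[n]} \times \PP(X)$) in terms of divisor classes on the Hilbert scheme and then pushing forward cleanly, since $Z$ is not a divisor and its fundamental class involves the exceptional divisor $B$ in a slightly subtle way — this is exactly where the factor of $\frac{1}{2}$ and the sign on $r$ enter. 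Using the test-curve approach sidesteps most of this: it reduces everything to two honest cohomology-and-base-change computations plus the normalization that $\Theta_e$ is linear in $e$, which is why I would organize the proof that way, citing \cite{bertram_potiers_2016} for the linearity and the identification of $\Pic(\PP(X)^{[n]})$.
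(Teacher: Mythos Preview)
The paper does not prove this proposition: it is stated without proof, explicitly recalled from \cite{bertram_potiers_2016} (see the sentence immediately preceding the proposition). So there is no in-paper argument to compare your plan against.

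That said, your outline is the standard route and is essentially what the cited reference does: use that $c_1\bigl(\det Rp_*(\EE\otimes q^*F)^\vee\bigr)$ depends linearly on $\ch(F)=e$, then determine the two coefficients either by Grothendieck--Riemann--Roch or by restricting to two test curves with known intersection numbers against $H_n$ and $B$. Your test-curve choices (moving one point along a line with the others fixed; a pencil in which two points collide) are the usual ones and work. The brief tangle over whether $e=(1,0,-n)$ is an admissible test class is harmless --- in fact $(1,0,-n)$ is \emph{not} orthogonal to $(1,0,-n)$ (the pairing is $1-2n$), and the rank-one orthogonal class is $(1,0,n-1)$; but as you note, any two independent orthogonal classes (say one with $r=0$ and one with $r\neq 0$) suffice. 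There is no genuine gap in the plan.
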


We now need to see how to change from the basis $\left< \OO(H_2), \OO(-\frac B2) \right>$ to the basis $\left<\OO_{\PP(S^2\LL^\vee)}(1), \pi^* \OO_{\PP(X^\vee)}(1) \right>$.

To get information about spaces of sections, we recall a result from \cite{arcara_minimal_2013}. We state it only for the case of $n=2$ points. Let $\Gr_k := \Gr(H^0(\OO_{\PP(X)}(k)),2)$ be the Grassmannian of dimension 2 quotients of $H^0(\OO_{\PP(X)}(k))$. For $k\ge1$, there is a map $\phi_k: \PP(X)^{[2]} \rightarrow \Gr_k$ given by taking a zero dimensional subscheme $Z$ to the quotient $H^0(\OO(k)) \ra H^0(\OO(k)|_Z)$. Let $\OO_{\Gr_k}(1)$ be the Pl\"{u}cker line bundles, and let $D_k = \phi_k^*\OO_{\Gr_k}(1)$.
\begin{thm}[Proposition 3.1, \cite{arcara_minimal_2013}]
 The class of $D_k$ is given by
 \[
 D_k = kH_2 - \frac{B}{2}.
 \]
 
 $D_k$ is very ample for $k \ge 2$.
\end{thm}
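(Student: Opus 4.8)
The plan is to establish the two assertions separately. For the formula, I use that $\Pic(\PP(X)^{[2]}) = \mathbb Z\,H_2 \oplus \mathbb Z\,\tfrac{B}{2}$ (recalled above): a divisor class is pinned down by its intersection numbers against any two curve classes that pair non-degenerately with $\{H_2,\tfrac B2\}$, and I evaluate $D_k$ on the two standard test curves. For very ampleness I show that $\phi_k$ is a closed immersion for $k\ge 2$, so that $D_k$, the pullback of the very ample Pl\"ucker bundle, is very ample.

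Fix distinct lines $\ell,m\subset\PP(X)$ and a point $p\notin\ell\cup m$. Let $C_1\cong\ell$ be the curve $\{\,\{p,x\}: x\in\ell\,\}$ of reduced subschemes with one point pinned at $p$, and let $C_2\cong\PP(T_p)$ be the curve of non-reduced length-$2$ schemes supported at $p$, parametrised by tangent direction. Elementary incidence counts give $H_2\cdot C_1=1$ (the moving point crosses $m$ once), $B\cdot C_1=0$ (the scheme never degenerates since $p\notin\ell$), and $H_2\cdot C_2=0$ (the scheme is supported off $m$); and $B\cdot C_2=-2$, because $C_2$ is a fibre of the Hilbert--Chow morphism $B\to\PP(X)$ and \'etale-locally over $\PP(X)$ the space $\PP(X)^{[2]}$ is the minimal resolution of an $A_1$ surface singularity with $B$ the exceptional $(-2)$-curve. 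Hence the pairing matrix of $\{H_2,\tfrac B2\}$ against $\{C_1,C_2\}$ is $\operatorname{diag}(1,-1)$, and it suffices to compute $D_k\cdot C_1$ and $D_k\cdot C_2$.

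Since $D_k=\phi_k^*\OO_{\Gr_k}(1)=\det(\phi_k^*\QQ)$, I identify the rank-$2$ bundle $\phi_k^*\QQ$ along each test curve and take its determinant. Over $C_1$ the universal subscheme is the disjoint union $\{p\}\sqcup\{x\}$, so
\[
\phi_k^*\QQ|_{C_1}\;\cong\;\big(\OO(k)_p\otimes\OO_\ell\big)\,\oplus\,\OO(k)|_\ell\;\cong\;\OO_{\PP^1}\oplus\OO_{\PP^1}(k),
\]
of determinant degree $k$, so $D_k\cdot C_1=k$. Over $C_2$, writing $Z_v$ for the scheme with tangent direction $v\subset T_p$, the structure-sheaf sequence $0\to\mathfrak m_p/I_{Z_v}\to\OO_{Z_v}\to\OO_p\to 0$ with $\mathfrak m_p/I_{Z_v}\cong v^\vee$, twisted by $\OO(k)$ and globalised over $\PP(T_p)$ (where the tautological line of directions is $\OO_{\PP^1}(-1)$), gives an extension
\[
0\;\to\;\OO_{\PP^1}(1)\otimes\OO(k)_p\;\to\;\phi_k^*\QQ|_{C_2}\;\to\;\OO_{\PP^1}\otimes\OO(k)_p\;\to\;0,
\]
of determinant degree $1$, so $D_k\cdot C_2=1$. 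Writing $D_k=aH_2+b\,\tfrac B2$ and pairing against $C_1,C_2$ forces $a=k$ and $b=-1$, i.e. $D_k=kH_2-\tfrac B2$.

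For very ampleness, since $\phi_k$ is a morphism from a projective variety it suffices to check it is injective on points and unramified for $k\ge 2$. For points: $\phi_k([Z])$ records exactly the subspace $H^0(I_Z(k))\subset H^0(\OO(k))$, of codimension $2$ for every length-$2$ $Z$ and $k\ge 1$; if $Z\ne Z'$ then $H^0(I_Z(k))=H^0(I_{Z'}(k))$ would place this subspace inside $H^0(I_{Z\cup Z'}(k))$, but $Z\cup Z'$ has length $\ge 3$ and contains a length-$3$ subscheme, which imposes $3$ independent conditions on $|\OO(k)|$ for $k\ge 2$, forcing codimension $\ge 3$ -- a contradiction. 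For unramifiedness: the differential of $\phi_k$ at $[Z]$ carries $\psi\in\Hom(I_Z,\OO_Z)$ to the map induced on global sections by $\psi\otimes\OO(k)\colon I_Z(k)\to\OO_Z(k)$; since every length-$2$ subscheme of $\PP^2$ has $2$-regular ideal, $I_Z(k)$ is globally generated for $k\ge 2$, so $H^0(I_Z(k))\otimes\OO\twoheadrightarrow I_Z(k)$, and then $H^0(\psi\otimes\OO(k))=0$ forces $\psi\otimes\OO(k)=0$, hence $\psi=0$. Thus $\phi_k$ is a closed immersion and $D_k$ is very ample. The steps I expect to demand the most care are the $(-2)$-curve input for $B\cdot C_2$, the correct identification of $\phi_k^*\QQ$ along the non-reduced test curve $C_2$ (the twist $\OO_{\PP^1}(1)$ there is precisely what produces the $-\tfrac B2$ in the formula), and, for the second assertion, the uniform global-generation bound via $2$-regularity of $I_Z$.
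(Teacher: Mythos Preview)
The paper does not prove this statement: it is quoted verbatim from \cite{arcara_minimal_2013} and used as a black box. So there is no ``paper's own proof'' to compare against. That said, your argument is correct and is essentially the standard one (and indeed close to what \cite{arcara_minimal_2013} does for the class formula): test against the two curves $C_1$ (one point fixed, one moving on a line) and $C_2$ (a fibre of $B\to\PP(X)$), identify $\phi_k^*\QQ$ on each, and read off the coefficients. Your computation of $B\cdot C_2=-2$ via the local $A_1$-resolution picture is right, as is the filtration of $\phi_k^*\QQ|_{C_2}$ with graded pieces $\OO_{\PP^1}(1)$ and $\OO_{\PP^1}$ coming from $\mathfrak m_p/I_{Z_v}\cong v^\vee$.

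For very ampleness your argument is also fine. One small point worth making explicit in the injectivity step: when $Z\cup Z'$ has length $3$ you should note that \emph{every} length-$3$ subscheme of $\PP^2$ (including the fat point $\operatorname{Spec}\OO/\mathfrak m_p^2$ and three collinear points) imposes independent conditions on $|\OO(2)|$; you assert this but it deserves a one-line check. The unramifiedness argument via $2$-regularity of $I_Z$ is clean and correct: $H^1(I_Z(1))=0$ because any length-$2$ scheme lies on a unique line, and $H^2(I_Z)=0$ from the ideal sequence, so $I_Z(k)$ is globally generated for $k\ge 2$ and the differential is injective as you say.
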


First, we see that $\phi_1$ is just the map giving the projective bundle $\PP(S^2 \LL^\vee)$. We conclude that $\pi^*\OO_{\PP(X^\vee)}(1)$ corresponds to $\OO(H_2 - \frac{B}{2})$. 

 A section $\sigma$ of the Pl\"{u}cker bundle on $\Gr(H^0(\OO(2)),2)$ can be given by picking $s_1$, $s_2 \in S^2 V^\vee$. Then, at any  dimension 2 quotient $q: H^0(\OO(2)) \ra Q$, the section $\sigma$ evaluates to $q(s_1) \wedge q(s_2) \in \bigwedge^2 Q$. Let us examine the vanishing locus of $\sigma$ when pulled back via $\phi_2$ to $\PP(S^2 \LL^\vee)$. Let $\ell \in \PP(X^\vee)$, and $f \in S^2\ell^\vee$ a point in the fiber over $\ell$. We see that the corresponding 2 dimensional quotient in the Grassmannian is $S^2\ell^\vee/\left<f\right>$. The wedge $s_1 \wedge s_2$ vanishes in the quotient if and only if  $s_1|_\ell$ and $s_2|_\ell$ are linearly dependent mod $f$. So we must have either $a s_1|_\ell + b s_2|_\ell =f$ or $a s_1|_\ell + b s_2|_\ell =0$ for some $a,b \in \CC$. In the former case, the vanishing locus is the hyperplane $\operatorname{span}(s_1|_\ell, s_2|_\ell) \subset S^2\ell^\vee$. In the later, the section vanishes on the whole fiber.
 
 Let us now view $S^2 X^\vee$ as the space of symmetric bilinear forms on $X$. Let $x,y,z$ be a basis for $X^\vee$. Pick $s_1 = x \otimes x$ and $s_2 = y \otimes y$. For any $\ell$ defined by an equation of the special form $ax+by$, we see that 
 \[
 a^2 (x|_\ell \otimes x|_\ell) = ax|_\ell \otimes ax|_\ell = (-by|_\ell) \otimes (-by|_\ell) = b^2 (y|_\ell \otimes y|_\ell)
 \]
 so the section vanishes on the whole fiber over $\ell$. For any other $\ell$, $x|_\ell \otimes x|_\ell$ and $y|_\ell \otimes y|_\ell$ are independent in $S^2 \ell^\vee$, so we get a hyperplane.

 Let $L = ax + by +cz \in X^\vee$, and let $\ell$ be the corresponding line. Associate to $L$ the point $p_L:=(0, -c, b) \otimes (-c^2,0,ac) \in S^2 \ell$ (here we view $S^2\ell$ as a quotient of $\ell \otimes \ell$). If $ c \neq 0$, then $x|_\ell \otimes x|_\ell$ and $y|_\ell \otimes y|_\ell$ are independent. Notice that both  $x|_\ell \otimes x|_\ell$ and $y|_\ell \otimes y|_\ell$ vanish on $p_L$, so they span the hyperplane defined by $p_L$. On the other hand, if $c=0$, then $p_L = 0$ and every point of $S^2\ell^\vee$ vanishes on $p_L$. So the association $L \mapsto p_L$ describes the same set as the vanishing locus of $\phi_2^*\sigma$.
 
 The association $L \mapsto p_L$ is 3-homogeneous, and generically it gives a hyperplane in the fiber. Hence it corresponds to the line bundle $\OO_{\PP(S^2\LL^\vee)}(1) \otimes \pi^* \OO_{\PP(X^\vee)}(3)$. 
 
Now, by linear algebra, the map $(c,r) \mapsto (d,f)$ so that $\OO(cH_2 - r \frac B2)$ corresponds to $\OO_{\PP(S^2\LL^\vee)}(d) \otimes \pi^* \OO_{\PP(X^\vee)}(f)$ is given by the matrix
 \[
  \begin{bmatrix}1 & -1 \\ 2 & -1\end{bmatrix}
 \]
So the theta bundle $(2m+1)h_2 - (m+1)\frac{B}{2}$ corresponds to $\OO_{\PP}(m) \otimes \pi^* \OO_{\PP(X^\vee)}(3m+1)$. Now we invoke Lemma \ref{lem:Gsec}, and conclude that the space of sections of the theta bundle is 
\[
(S^{3m+1,m+1} + S^{3m-1,m+3} + S^{3m-3,m+5} + \dots + S^{2\lceil \frac m2\rceil +m +1,2\lfloor\frac m2\rfloor+m+1}) (X^\vee).
\]
thus proving the first part of \cref{thm-sec}.

\section{Resolutions by Exceptional Bundles} \label{sec:beil}
This section gives motivation for our our choice of moduli space $M(e)$ in the next section. In \cref{sec:relationship}, we will use the results here to argue that $M(e)$ is birational to the  moduli space $M^{ss}(e)$.  We omit a discussion of Gieseker stability here, as it can be found in many places and is not essential to this paper. 

We do need to recall the resolution by exceptional sheaves from \cite{coskun_effective_2014}.   A stable vector bundle $E$ on $\PP^2$ is an \emph{exceptional bundle} if $\Ext^1(E,E)=0$. The \emph{slope} of a vector bundle $E$ on $\PP^2$ is given by $\mu(E) = c_1(E)/\operatorname{rank}(E)$ (where $c_1(E)$ is viewed as an integer). For a rational number $\alpha$, there is at most one exceptional bundle of slope $\alpha$. If there is such a bundle, we call it $E_\alpha$ and say that $\alpha$ is an \emph{exceptional slope}.  For example, all integers are exceptional slopes since the line bundles $\OO(n)$ are exceptional bundles.

Let $r_\alpha$ be the rank of $E_\alpha$, which is equal to the denominator of $\alpha$. Let $\Delta_\alpha = \tfrac{1}{2}(1-\tfrac{1}{r_\alpha^2})$ be the discriminant of $E_\alpha$. The set of exceptional slopes $\mathcal{X}$ is in bijection with the dyadic integers via a function $\varepsilon \colon \mathbb{Z}[\tfrac{1}{2}] \to \mathcal{X}$ defined inductively by $\varepsilon(n)=n$ for $n \in \mathbb{Z}$ and by setting
\[
\varepsilon\left( \frac{2p+1}{2^{q+1}}\right) = \varepsilon\left( \frac{p}{2^q}\right). \varepsilon\left( \frac{p+1}{2^q}\right),
\]
where the dot operation on exceptional slopes is defined by $\alpha.\beta = \tfrac{\alpha+\beta}{2} + \tfrac{\Delta_\beta - \Delta_\alpha}{3+\alpha-\beta}$. Each dyadic integer can be written uniquely as $\frac{2p+1}{2^{q+1}}$, so the $p$ and $q$ in the equation are uniquely determined, and we call the equation the \emph{standard decomposition} of the exceptional slope $\varepsilon\left( \tfrac{2p+1}{2^{q+1}}\right)$.

Let $g \in H^*(\PP^2,\mathbb Q)$ be a Chern character. To find the exceptional slopes for resolving general sheaves in $M^{ss}(g)$, one needs the \emph{corresponding exceptional slope} $\gamma$ of $g$. This is obtained by first computing
\[
\mu_0 = -\tfrac{3}{2}-\mu + \sqrt{\tfrac{5}{4}+\mu^2-\tfrac{2\ch_2(g)}{r}},
\]
where $r$ is the rank of $g$ and $\mu := c_1(g)/r$ is the slope. Then $\gamma$ is the unique exceptional slope satisfying $|\mu_0-\gamma| < x_{\gamma}$, where $x_{\gamma} = \tfrac{3}{2}-\sqrt{\tfrac{9}{4}-\tfrac{1}{r_\gamma^2}}$.

\begin{prop}[\cite{coskun_effective_2014}]\label{beil-res} Let $g$ be a Chern character, let $\gamma$ be the corresponding exceptional slope to $g$, and let $\gamma = \alpha.\beta$ be the standard decomposition of $\gamma$. If $\chi(g \cdot \ch E_\gamma) \ge 0$, then the general $G \in M(g)$ has a resolution
		\[
		0 \to E_{-\alpha-3}^{m_1} \to E_{-\beta}^{m_2} \oplus E_{-\gamma}^{m_3} \to G \to 0,
		\]
		where $m_1 = -\chi(G \otimes E_\alpha)$, $m_2 = -\chi(G \otimes E_{\alpha.\gamma})$, $m_3 = \chi(G \otimes E_\gamma)$.
\end{prop}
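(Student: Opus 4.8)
The plan is to realise the asserted resolution as a generalised Beilinson resolution with respect to the exceptional collection singled out by the standard decomposition $\gamma = \alpha.\beta$, to collapse it using a generic cohomology-vanishing statement, and then to extract the multiplicities from the Euler pairing. I would begin by recalling the relevant piece of the theory of exceptional bundles on $\PP^2$ (Dr\'ezet, Gorodentsev, Rudakov): the fact that $\gamma = \alpha.\beta$ is a standard decomposition records that $E_\alpha$, $E_\beta$, $E_\gamma$ are linked by the mutation operations, and correspondingly that $E_{-\alpha-3} = E_\alpha^\vee \otimes \omega_{\PP^2}$, together with $E_{-\beta}$ and $E_{-\gamma}$, form --- in a suitable order, after the appropriate shift along the helix --- a full exceptional collection of $D^b(\PP^2)$ whose members are pairwise Euler-orthogonal in the pattern prescribed by the triad. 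Since such a collection generates $D^b(\PP^2)$, an arbitrary coherent sheaf $G$ sits in a Beilinson-type complex with terms built from $E_{-\alpha-3}$, $E_{-\beta}$, $E_{-\gamma}$, the multiplicities being expressible through $\chi(G\otimes E_\alpha)$, $\chi(G\otimes E_{\alpha.\gamma})$ and $\chi(G\otimes E_\gamma)$ by means of the dual collection.

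The heart of the argument is the generic collapse: for $G$ general in $M(g)$ this complex must reduce to the two-term complex $0 \to E_{-\alpha-3}^{m_1} \to E_{-\beta}^{m_2}\oplus E_{-\gamma}^{m_3} \to 0$ whose only cohomology is $G$ in degree $0$, which then reads as the exact sequence in the statement. This is precisely where the hypothesis that $\gamma$ is the \emph{corresponding} exceptional slope of $g$ is used: the inequality $|\mu_0 - \gamma| < x_\gamma$ is the numerical condition under which Dr\'ezet and Le Potier's description of the cohomology of a general semistable sheaf on $\PP^2$ forces all the ``wrong'' groups $\Ext^k(G, E_\bullet)$ to vanish on a dense open subset of $M(g)$, so that the Beilinson monad degenerates. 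The extra hypothesis $\chi(g\cdot\ch E_\gamma)\ge 0$ is exactly the statement that $m_3 = \chi(G\otimes E_\gamma) \ge 0$, i.e.\ that the summand $E_{-\gamma}$ belongs on the right of the resolution; if it were negative the two sides of the sequence would exchange roles.

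With the resolution known to have the displayed shape, I would apply $\chi(-\otimes E_\alpha)$, $\chi(-\otimes E_{\alpha.\gamma})$ and $\chi(-\otimes E_\gamma)$ to the short exact sequence. By the orthogonality relations above --- together with $\chi(E_{-\gamma}\otimes E_\gamma) = \dim\Hom(E_\gamma,E_\gamma) = 1$ and the analogous $\chi(E_{-\alpha-3}\otimes E_\alpha) = 1$, which follows from Serre duality since $E_\alpha$ is exceptional --- each pairing annihilates two of the three resolving bundles and leaves $m_1 = -\chi(G\otimes E_\alpha)$, $m_2 = -\chi(G\otimes E_{\alpha.\gamma})$ and $m_3 = \chi(G\otimes E_\gamma)$. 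The concurrent vanishing of $\Hom$ and $\Ext^2$ between the resolving bundles and $E_\alpha, E_{\alpha.\gamma}, E_\gamma$ (again dictated by the exceptional-collection structure) ensures these are equalities of ranks, not merely of alternating sums.

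The step I expect to be the main obstacle is the generic collapse. It is not formal: it relies on the full Dr\'ezet--Le Potier machinery for the cohomology of a general semistable sheaf on $\PP^2$ and on the precise arithmetic of the functions $\mu_0$, $x_\gamma$ and $\Delta_\gamma$ entering the definition of the corresponding exceptional slope. Concretely one must first produce a single sheaf with the expected cohomology --- typically by checking that the ``expected'' resolution is balanced and that the general cokernel of a general map $E_{-\alpha-3}^{m_1}\to E_{-\beta}^{m_2}\oplus E_{-\gamma}^{m_3}$ is semistable with Chern character $g$ --- and then propagate the vanishing to a dense open subset of $M(g)$ by semicontinuity. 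The bookkeeping that pins down which of $E_{-\beta}$, $E_{-\gamma}$, $E_{-\alpha-3}$ lands on which side of the resolution, and the sign conventions in the $m_i$, is the secondary place where care is needed; both are carried out in \cite{coskun_effective_2014}.
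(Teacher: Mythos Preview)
The paper does not prove this proposition: it is quoted verbatim from \cite{coskun_effective_2014} and used as a black box, so there is no ``paper's own proof'' to compare against. Your sketch is a reasonable outline of the argument one finds in that reference --- Beilinson-type resolution with respect to the exceptional triad determined by $\gamma=\alpha.\beta$, generic collapse via the Dr\'ezet--Le Potier height-zero theory, and extraction of the $m_i$ from the Euler pairing --- and it correctly identifies the generic collapse as the substantive step. Since the present paper only \emph{applies} the proposition (computing $\gamma$, checking $\chi(g\cdot\ch E_\gamma)\ge 0$, and reading off $m_1,m_2,m_3$ for the specific class $e$), no proof is expected here.
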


 We want to apply \cref{beil-res} with $g = e=(m+1,2m+1, -2m-\frac12)$. We claim that the corresponding exceptional slope is $\gamma = -\frac12$. We have $x_{-\frac12} = \frac32 - \sqrt{2} \approx 0.08579$.

The inequality $|\mu_0 - \gamma| < x_{-\frac12}$ can be verified by a straightforward computation for any small values of $m \ge 1$. So we focus on the cases $m \gg 1$.

We first observe that $\mu = \frac{2m+1}{m+1}$ and $-\frac{2\ch_2(g)}{r} = \frac{2(2m + \frac12)}{m+1}$ are both increasing functions for $m \ge 1$, with limits $2$ and $4$, respectively. Thus, for any $m \ge N \ge 1$, we have
\[
-\frac32 - 2 + \sqrt{\frac54 + \left(\frac{2N+1}{N+1}\right)^2 + \frac{2(2N + \frac12)}{N+1}} \le \mu_0(m) \le -\frac32 - \frac{2N+1}{N+1} + \sqrt{\frac54 + 4 + 4}
\]
When $N = 22$, this inequality gives
\[
  -0.50876 < \mu_0 < -0.41514
\] 
so we see that the associated exceptional slope is $-\frac{1}{2}$ for all $m \ge 22$, which just leaves just 21 cases that can easily be checked by a computer.

Hence $\alpha = -1$, $\gamma = -\frac12$, and $\beta = 0$. So $E_{-\alpha-3} = \OO(-2)$, $E_{-\gamma} = T(-1)$, and $E_{-\beta} = \OO$. 
A calculation with the Hirzebruch-Riemann-Roch theorem shows that $\chi(e \cdot \ch E_{-\frac12}) =1 >0$. Further, one calculates $m_1$, $m_2$, and $m_3$ to obtain:
\begin{prop} \label{gen-res}
	Let $n=2m-1$. A general semistable sheaf $ E$ with Chern character $e$ has a resolution
	\begin{equation}
	 0 \ra \OO(-2)^m \ra T(-1) \oplus \OO^{n} \ra  E \ra 0 \tag{$\star$} \label{eq:Eres}
	\end{equation}
\end{prop}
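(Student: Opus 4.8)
The plan is to obtain $(\star)$ as a direct application of \cref{beil-res} to $g=e$, using the computations already made above. We have checked that the exceptional slope corresponding to $e$ is $\gamma=-\tfrac12$, with standard decomposition $\gamma=\alpha.\beta$ for $\alpha=-1$ and $\beta=0$, and that $\chi(e\cdot\ch E_{-1/2})=1\ge 0$; so the hypotheses of \cref{beil-res} hold, and a general semistable sheaf $E$ of Chern character $e$ sits in an exact sequence
\[
0 \to E_{-\alpha-3}^{m_1} \to E_{-\beta}^{m_2}\oplus E_{-\gamma}^{m_3} \to E \to 0 .
\]
The first step is to name the three exceptional bundles. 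Since $-\alpha-3=-2$, $-\beta=0$, and $-\gamma=\tfrac12$, these are $E_{-2}=\OO(-2)$, $E_{0}=\OO$, and $E_{1/2}=T_{\PP^2}(-1)$, the last being the unique exceptional bundle of slope $\tfrac12$, which has rank $2$.

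It remains to pin down the multiplicities $m_1,m_2,m_3$. One could evaluate the formulas of \cref{beil-res} by Hirzebruch--Riemann--Roch, but for $m_2=-\chi(E\otimes E_{\alpha.\gamma})$ this would first require identifying $E_{\alpha.\gamma}=E_{-3/5}$ and its Chern character, so I would instead use additivity of $\ch$ in the sequence above. From $c_1(T_{\PP^2})=3h$ and $c_2(T_{\PP^2})=3h^2$ one gets $\ch T_{\PP^2}=(2,3,\tfrac32)$, hence $\ch T_{\PP^2}(-1)=(2,1,-\tfrac12)$; together with $\ch\OO(-2)=(1,-2,2)$ and $\ch\OO=(1,0,0)$ these three classes are linearly independent in $H^*(\PP^2,\mathbb Q)$. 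Consequently the identity $e=m_2\ch\OO+m_3\ch T_{\PP^2}(-1)-m_1\ch\OO(-2)$ has a unique solution, and matching ranks, first Chern classes, and second Chern characters gives a $3\times 3$ linear system whose solution is $m_1=m$, $m_3=1$, $m_2=2m-1=n$. Substituting back yields precisely $(\star)$.

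All of the genuine work --- fixing $\gamma=-\tfrac12$ as the exceptional slope attached to $e$, where the particular choice of $e$ and the range $m\ge 1$ enter, and where genericity of $E$ is used via \cref{beil-res} --- has already been carried out. So I do not expect a real obstacle here: the remaining steps are recalling the standard fact that $E_{1/2}=T_{\PP^2}(-1)$ on $\PP^2$ and solving a small linear system, both routine. As a cross-check, the direct computations $m_1=-\chi(E\otimes\OO(-1))$ and $m_3=\chi(E\otimes T_{\PP^2}(-1))$ via Hirzebruch--Riemann--Roch will agree with the values above, the second reproducing the value $\chi(e\cdot\ch E_{-1/2})=1$ noted earlier.
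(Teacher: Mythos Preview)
Your argument is correct and follows the same route as the paper: apply \cref{beil-res} with the already-verified data $\gamma=-\tfrac12$, $\alpha=-1$, $\beta=0$, and $\chi(e\cdot\ch E_{-1/2})=1\ge0$, identify the three exceptional bundles, and compute the exponents. The only minor difference is that the paper simply says ``one calculates $m_1$, $m_2$, $m_3$'' via the Hirzebruch--Riemann--Roch formulas in \cref{beil-res}, whereas you determine them by additivity of $\ch$ and linear independence of $\ch\OO$, $\ch T(-1)$, $\ch\OO(-2)$; this is a pleasant shortcut that sidesteps identifying $E_{\alpha.\gamma}=E_{-3/5}$, but it is not a genuinely different approach.
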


We will refer to such a resolution as a \ref{eq:Eres}-resolution. We will also need a slight strengthening of \cref{gen-res}:

\begin{prop} \label{gen-gen}
	A general semistable sheaf $E$ with Chern character $e$ has a general \ref{eq:Eres}-resolution.
	
	More precisely, for any proper  closed subset $Z \subset M^{ss}(e)$, the set 
	\[
	\left\{\phi \in 	\Hom(\OO(-2)^m, T(-1) \oplus \OO^{n}): [\ker \phi] \in Z \right\}
	\]
	is contained in a proper (Zariski) closed subset of $\Hom(\OO(-2)^m, T(-1) \oplus \OO^{2m-1})$.
\end{prop}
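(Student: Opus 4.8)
The plan is to assemble all $(\star)$-resolutions into a single flat family over a dense open subset of the affine space $\mathbb H:=\Hom(\OO(-2)^{m},\,T(-1)\oplus\OO^{n})$ (with $n=2m-1$) and then deduce the statement from \cref{gen-res} by a standard dominant-map argument. The geometric content is already in \cref{gen-res}; \cref{gen-gen} is the routine upgrade from ``the general $E$ admits such a resolution'' to ``the general resolution datum $\phi$ produces a general $E$'', the displayed set in the statement being the precise form of the first sentence.

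First I would cut out the open locus of $\mathbb H$ on which the construction behaves well. Let $\Phi$ denote the universal homomorphism on $\mathbb H\times\PP^{2}$. Since $\OO(-2)^{m}$ is torsion free, a homomorphism $\phi$ is injective as a map of sheaves exactly when it is injective at the generic point of $\PP^{2}$, i.e.\ when the rank of $\phi$ there equals $m=\operatorname{rank}\OO(-2)^{m}$; equivalently, $\operatorname{rank}(\operatorname{coker}\phi)$ attains its minimal value $m+1$. As the rank of a cokernel is upper semicontinuous, this is an open condition, defining $\mathbb H^{\circ}\subseteq\mathbb H$. Over $\mathbb H^{\circ}$ the sheaf $\operatorname{coker}\phi$ — the sheaf $E$ appearing in $(\star)$ — fits in $0\to\OO(-2)^{m}\to T(-1)\oplus\OO^{n}\to\operatorname{coker}\phi\to 0$, so it has the Hilbert polynomial prescribed by $e$, whence $\operatorname{coker}\Phi$ is flat over $\mathbb H^{\circ}$. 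I would let $U\subseteq\mathbb H^{\circ}$ be the further open locus where $\operatorname{coker}\phi$ is Gieseker semistable (openness of semistability in flat families). By \cref{gen-res} the general $E\in M^{ss}(e)$ has a resolution $(\star)$, and the injection $\OO(-2)^{m}\hookrightarrow T(-1)\oplus\OO^{n}$ in such a resolution is an element of $U$, so $U\neq\emptyset$.

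I would then observe that the flat family $\operatorname{coker}\Phi|_{U\times\PP^{2}}$ of Gieseker semistable sheaves of Chern character $e$ induces, since $M^{ss}(e)$ corepresents the moduli functor, a morphism $q\colon U\to M^{ss}(e)$. By \cref{gen-res} again, every $E$ in a dense open subset of $M^{ss}(e)$ has a resolution $(\star)$ and so lies in the image of $q$; thus $q$ is dominant. Now, for a proper closed subset $Z\subsetneq M^{ss}(e)$: if $\phi\notin U$ the sheaf in $(\star)$ attached to $\phi$ is either not of Chern character $e$ or not semistable, and so defines no point of $M^{ss}(e)$, whence the set in question equals $q^{-1}(Z)$; since $\mathbb H$ is an affine space the open set $U$ is irreducible, and a dominant morphism from an irreducible variety pulls a proper closed subset back to a proper closed subset, so $q^{-1}(Z)$ is a proper closed subset of $U$, and therefore its closure is a proper closed subset of $\mathbb H$. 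That is the claim.

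I do not expect a serious obstacle: the whole geometric input is \cref{gen-res}, after which \cref{gen-gen} is just the elementary fact that a general element of an irreducible parameter space lies outside any prescribed proper closed subset of the target of a dominant map. The only steps needing care are the set-up — verifying that injectivity of $\phi$ is an open condition, so that $\operatorname{coker}\Phi$ is flat and $q$ is an honest morphism — and recalling that $M^{ss}(e)$ is irreducible, so that ``general'' in \cref{gen-res} is unambiguous and dominance of $q$ yields the stated conclusion.
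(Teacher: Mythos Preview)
Your argument is correct and follows essentially the same route as the paper's proof: open up the locus $U$ of injective $\phi$ with semistable cokernel, obtain a morphism $U\to M^{ss}(e)$ sending $\phi$ to its cokernel, and observe that the preimage of a proper closed $Z$ is proper closed in $U$, hence has proper closure in the whole $\Hom$-space. The paper is terser---it cites the proof of \cite[Proposition~5.3]{coskun_effective_2014} directly for the nonemptiness of $U$ rather than deducing it from \cref{gen-res}, and it does not spell out flatness or the corepresentability argument---but the logic is the same. (Note also that the displayed set in the statement should read $[\operatorname{coker}\phi]$ rather than $[\ker\phi]$; you have interpreted it correctly.)
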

\begin{proof}
	In the proof of Proposition 5.3 in \cite{coskun_effective_2014}, we saw that a general map $\OO(-2)^m \ra T(-1) \oplus \OO^{2m-1}$ is injective and has a stable cokernel. Let $U \subset \Hom(\OO(-2)^m , T(-1) \oplus \OO^{2m-1})$ be a Zaraski open set consisting of such maps. There is a morphism $C:U \ra M^{ss}(e)$ taking a map to its cokernel. Then $C^{-1}(Z)$ is a closed subset of $U$ and hence it closure is a proper closed subset of $\Hom(\OO(-2)^m , T(-1) \oplus \OO^{2m-1})$.
\end{proof}

\section{The Higher Rank Space} \label{sec:kron} 
\subsection{The parameter space} \label{sec:the-mod}
We set $n=2m-1$, as in the previous section.
For our moduli space, we will use set of all coherent sheaves $E$ that fit in an exact sequence
\begin{equation}
0 \ra T(-1) \rightarrow E \rightarrow F \rightarrow \ra 0 \label{eq:Eext}
\end{equation}
where $F$ is the cokernel\footnote{This $F$ is \emph{not} the same as the $F$ that is orthogonal to $E$, as in the introduction.} of an injective map $\OO(-2)^m \ra \OO^{n}$. We remark that for a general \ref{eq:Eres}-resolution, the map $\OO(-2)^m \ra \OO^{n}$ will be injective.
Then one can form the diagram
\[
\xymatrix{
&&T(-1) \ar[r]^{=} \ar[d] & T(-1) \ar[d] & \\
0 \ar[r] & \OO(-2)^m \ar[r] \ar[d] & T(-1) \oplus \OO^n \ar[r] \ar[d] & E \ar[r] \ar[d] &0 \\
0 \ar[r] & \OO(-2)^m \ar[r] &  \OO^n \ar[r] & F \ar[r] &0
}
\] where the right column gives an extension of the form \eqref{eq:Eext}. Thus, by \cref{gen-gen}, a general semistable $E$ appears in some exact sequence \eqref{eq:Eext}.

For brevity, we put $W:= S^2 X = H^0(\PP(X), \OO(2))^\vee$. 
We parameterize injective maps $\OO(-2)^{m} \rightarrow \OO^{n}$, up to a change of basis in the source, by $\Gr := \Gr(m, \CC^n \otimes W^\vee)$, the Grassmannian of $m$ dimensional subspaces of $\CC^n \otimes W^\vee$. Given a dimension $m$ subspace $B \subset \CC^n \otimes W^\vee$, we get an injective map $\OO(-2) \otimes B \ra \OO^n$ with cokernel $F_B$. The space of extensions $E$ of $F_B$ by $T(-1)$ form a bundle over this Grassmannian. We next identify this bundle. 

Let 
\[
0 \ra \SS \ra \CC^n \otimes W^\vee 
\]
be the   universal subspace on the Grassmannian. The universal version of the map $\OO(-2) \otimes B \ra \OO^n$ is a map

\[
\OO_{\PP(X)}(-2) \boxtimes \SS  \rightarrow  \CC^n \otimes \OO
\]
on $\PP^2 \times \Gr$. Let $\FF$ be the cokernel. Its derived dual $\FF^\vee$ is given by the complex
\begin{equation}
\FF^\vee \cong [\OO \otimes (\CC^n)^\vee \ra \OO_{\PP(X)}(2) \boxtimes \SS^\vee] \label{eq:Fd}
\end{equation}

Let $p$ be the projection $\PP^2 \times \Gr \ra \Gr$. The bundle we want is
\[
\GG := R^1p_*(\FF^\vee  \otimes T(-1)).
\]
By \cref{ext-jump} (which we will prove later), there is a dense open subset $\UU$ of $\Gr$ where the dimension $H^1(\PP(X), F_B^\vee \otimes T(-1))$  is constant and equal to the expected dimension $-\chi(F_B^\vee \otimes T(-1))$. Then, by a cohomology and base change theorem, $\GG$ is a vector bundle on $\UU$, and its fibers are
\[
\GG_B = H^1(\PP(X), F_B^\vee \otimes T(-1)) \cong \Ext^1(F_B, T(-1)).
\]
as desired. So we will use $\PP(\GG) \ra \UU$ as our moduli space, modulo the choice of basis for $\CC^n$.

We can obtain a nice description of $\GG$ as follows. Tensoring \eqref{eq:Fd} with $ T(-1)$, we get that $T(-1) \otimes \FF^\vee$ is quasi-isomorphic to
\[
[T(-1) \otimes (\CC^n)^\vee \ra T(1) \boxtimes \SS^\vee]
\]
Since $T(-1)$ and $T(1)$ have no higher cohomology, we can simply apply $p_*$ to the complex to see that $Rp_*(  T(-1) \otimes \FF^\vee)$ on $\Gr$ is quasi-isomorphic to:
\begin{equation}
[(\CC^{n})^\vee \otimes H^0(T_{\PP(X)}(-1)) \rightarrow \SS^\vee \otimes H^0(T_{\PP(X)}(1))]. \label{eq:G}
\end{equation}
Then $R^1p_*(T(-1) \otimes \FF^\vee) = H^1(Rp_*(T(-1) \otimes \FF^\vee ))$ is equal to the cokernel of the above complex.

\subsection{Relationship between \texorpdfstring{$\PP(\GG)$ and $M^{ss}(e)$}{P(G) and Mss(e)}} \label{sec:relationship}
$\PP(\GG)$ carries a family of sheaves $E$ fitting into extensions \eqref{eq:Eext}. (We will describe this family in the next section.) From this family, we get a (perhaps partially defined) map $\phi: \PP(\GG) \ra M^{ss}(e)$. As discussed at the beginning of \cref{sec:the-mod}, a general semistable $E$ fits into a an extension \eqref{eq:Eext}. Furthermore, again by \cref{gen-gen}, the $F$ in that extension in general, and so lies in $\UU$. We conclude that $\phi$ is a dominant map.

We now describe the fibers of $\phi$. Let $E$ be a semistable bundle fitting into a sequence \eqref{eq:Eext}. First, one can check that for any $F$, $\Hom(T(-1),F)=0$. Hence, if we apply $\Hom(T(-1), -)$ to the sequence \eqref{eq:Eext}, we see that $\Hom(T(-1), E) \cong \Hom(T(-1),T(-1)) \cong \CC$. That is, there is a unique (up to scaling) map $T(-1) \ra E$. The isomorphism class of the cokernel $F$ does not depend on the scaling. So we see that the fiber of $\phi$ over $E$ is the set of all points $B$ in the Grassmannian $\Gr$  so that the corresponding $F_B$ is isomorphic to $F$. The following lemma shows that this set is an $SL(n)$-orbit, where the action of $SL(n)$ corresponds to the choice of basis for $\CC^n$. 

\begin{lem}
	Let $B, B' \subset \CC^n \otimes W^\vee$ be dimension $m$ subspaces, with  corresponding sequences: 
	\[
	\xymatrix{
		0 \ar[r] &B \otimes \OO(-2) \ar[r] &\CC^n \otimes \OO \ar[r]^f &F_B \ar[r] &0 \\
		0 \ar[r] &B' \otimes \OO(-2) \ar[r] &\CC^n \otimes \OO \ar[r]^{f'} &F_{B'} \ar[r] &0
	}
	\]
	Then the $F_B$ and $F_{B'}$ are isomorphic if and only if $B' = g(B)$ for some $g \in SL(n)$.
\end{lem}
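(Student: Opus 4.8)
The plan is to show that isomorphism of the cokernels $F_B$ and $F_{B'}$ is equivalent to the existence of an isomorphism of the two short exact sequences restricting to the identity (up to the $SL(n)$-action) on the middle term $\CC^n \otimes \OO$. The key observation is that the two-term complex $B \otimes \OO(-2) \to \CC^n \otimes \OO$ is (quasi-isomorphic to) a resolution of $F_B$, and in fact it should be the \emph{minimal} such resolution in an appropriate sense, so that it is canonically recovered from $F_B$. Concretely, I would first verify the elementary cohomological facts: $\Hom(\OO(-2), \OO) = W^\vee$ (degree-two forms), $\Hom(\OO, \OO(-2)) = 0$, $\Ext^i(\OO(-2),\OO) = 0$ for $i>0$, and $H^0(F_B(-2))$ has dimension $n$ while $H^0(F_B(j)) = 0$ for $j < 0$ sufficiently negative, together with $\Ext^1(F_B, \OO) = 0$. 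These are immediate from the defining sequence twisted appropriately and the cohomology of line bundles on $\PP^2$.

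The main step is then to argue functoriality and uniqueness of the resolution. Given an isomorphism $\psi\colon F_B \xrightarrow{\sim} F_{B'}$, I would lift it: since $\Ext^1(B\otimes\OO(-2),\, \ker f') = 0$ — here $\ker f' = B' \otimes \OO(-2)$, and $\Ext^1(\OO(-2),\OO(-2)) = \Ext^1(\OO,\OO) = 0$ — the surjection $\CC^n \otimes \OO \twoheadrightarrow F_B$ lifts $\psi \circ f$ to a map $g\colon \CC^n \otimes \OO \to \CC^n \otimes \OO$ making the square with $f, f'$ commute. Such a $g$ is an element of $\Hom(\CC^n\otimes\OO, \CC^n\otimes\OO) = \mathrm{End}(\CC^n)$, i.e.\ an $n\times n$ matrix; the five lemma (or a direct diagram chase) shows $g$ is an isomorphism because $\psi$ is, and it restricts to an isomorphism $B\otimes\OO(-2) \xrightarrow{\sim} B'\otimes\OO(-2)$, which at the level of global sections of the $(-2)$-twist is exactly the statement $B' = g(B) \subset \CC^n \otimes W^\vee$ after identifying $H^0((B\otimes\OO(-2))(2)) = B\otimes W^\vee \cap (\text{image})$, i.e.\ $B$ itself. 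A priori $g \in GL(n)$; to get $g \in SL(n)$ one either rescales (the $SL(n)$-orbit of $B$ equals its $GL(n)$-orbit since scalars act trivially on an $m$-dimensional subspace only up to the overall scalar — more precisely $GL(n) = SL(n)\cdot \CC^\times$ and $\CC^\times$ scaling sends $B$ to $B$), so the $GL(n)$-orbit and $SL(n)$-orbit of $B$ in $\Gr$ coincide. The converse direction is trivial: if $B' = g(B)$ for $g \in SL(n) \subset GL(n) = \mathrm{Aut}(\CC^n\otimes\OO)$, then $g$ carries the first sequence to the second, inducing $F_B \cong F_{B'}$.

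I expect the main obstacle to be the lifting and uniqueness part: making precise that the given two-term complex is rigid enough that \emph{any} isomorphism of cokernels is induced by an automorphism of $\CC^n \otimes \OO$, rather than just by some abstract quasi-isomorphism of complexes. This requires the vanishing $\Hom(\OO,\OO(-2)) = 0$ (so there is no ambiguity by homotopies in the wrong direction) and $\Ext^1(\OO(-2),\OO(-2)) = 0$ (so the lift exists), both of which hold but should be stated. A secondary subtlety is the passage $GL(n) \rightsquigarrow SL(n)$, which I would handle in one line by noting that the center $\CC^\times \subset GL(n)$ acts trivially on the Grassmannian $\Gr(m, \CC^n \otimes W^\vee)$, so the two orbits of $B$ agree. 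Everything else — the needed $\mathrm{Ext}$ computations and the five-lemma argument for bijectivity of $g$ — is routine.
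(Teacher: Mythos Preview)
Your approach is the same as the paper's: lift the isomorphism $\psi\colon F_B \to F_{B'}$ to a map $g$ on $\CC^n\otimes\OO$, show $g$ is invertible, and read off $g(B)=B'$; the $GL(n)\rightsquigarrow SL(n)$ step is handled identically. Two small corrections. First, the vanishing that produces the lift is $\Ext^1(\CC^n\otimes\OO,\,B'\otimes\OO(-2))=H^1(\OO(-2))^{\oplus n\cdot m}=0$, not $\Ext^1(\OO(-2),\OO(-2))$. Second, and more substantively, the five lemma does \emph{not} give that $g$ is an isomorphism from $\psi$ alone: you would need two of the three vertical maps already known to be isomorphisms. The paper's argument here is the actual content of the lemma: by the snake lemma $\ker g \cong \ker\bigl(B\otimes\OO(-2)\to B'\otimes\OO(-2)\bigr)$, but the former is of the form $\OO^a$ and the latter of the form $\OO(-2)^b$, so both must be zero (and likewise for cokernels). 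Alternatively, since $\Hom(\OO,\OO(-2))=0$ the lift of $\psi$ is \emph{unique}, so lifting $\psi^{-1}$ produces a two-sided inverse to $g$; you in fact list this vanishing, so you have the ingredients, but the step should be made explicit rather than attributed to the five lemma.
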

\begin{proof} 
	Let $h$ be an isomorphism between $F_B$ and $F_{B'}$. Apply $\Hom(\CC^n \otimes \OO, -)$ to the sequence for $B'$. We have  $\Ext^i(\CC^n \otimes \OO, B'\otimes \OO(-2)) = 0$ for $i=0,1$, hence $\Hom(\CC^n\otimes \OO, \CC^n\otimes \OO) \cong \Hom(\CC^n\otimes \OO, F_{B'})$. Hence the map $h \circ f:\CC^n \otimes \OO \ra F_{B'}$ must be equal to $f' \circ \phi$ for a unique $\phi \in \Hom(\CC^n\otimes \OO, \CC^n\otimes \OO)$. Furthermore, we claim that $\phi$ is an isomorphism. Indeed, by the snake lemma, the kernel of $\phi$ is isomorphic to the kernel of the induced map $\psi:B \otimes \OO(-2) \ra B' \otimes \OO(-2)$. But the kernel of $\phi$ must be of the form $\OO^a$ for some $a$, while the kernel of $\psi$ must be of the form $\OO(-2)^b$ for some $b$. It follows that both $\phi$ and $\psi$ are isomorphisms. Hence we can view $\phi$ as an element of $GL(n)$, and $\phi(B) = B'$. The condition $\phi(B) = B'$ doesn't depend on a scaling factor, so we may find an appropriate $g \in SL(n)$.
	
	The other direction is straightforward.
\end{proof}
We have now seen that $\phi$ is a dominant rational map whose fibers are precisely $SL(n)$-orbits.  So we say that $\PP(\GG)/SL(n)$ is birational to $M^{ss}(e)$.
 
Instead of working directly on the quotient $\PP(\GG)/SL(n)$, we will considering equivariant bundles on $\PP(\GG)$ with their $SL(n)$-invariant sections.

\subsection{Identifying the theta bundle} \label{sec:id-theta-kron}
Label various projections as in the diagram.
\[
\xymatrix{ \PP(\GG) \ar[d]^-\pi & \ar[l]_-p  \PP(\GG) \times \PP(X) \ar[r]^-q \ar[d]^-{\pi'} & \PP(X) \ar[d]^= \\
	\Gr & \ar[l]_-{p'} \Gr \times \PP(X) \ar[r]^-{q'} & \PP(X)
}
\]

The Picard group of $\PP(\GG)$ is generated by $\OO_{\PP(\GG)}(1)$ and the pullback of the Pl\"{u}cker line bundle $p^*\OO_{\Gr}(1)$. Instead of the Pl\"{u}cker line bundle, it will be more convenient to use the bundle $p^*\det \SS^\vee$. These bundles are isomorphic, but have different actions:
\[
 \OO_{\Gr}(1) \cong \det \SS^\vee \otimes \det(\CC^n \otimes W^\vee).
\]

 As mentioned in the introduction, the theta bundle is
\[
\det^\vee(Rp_*(\EE \otimes q^*I_2  )),
\]
where $\EE$ is the family on $\PP(\GG)$ and $I_2$ is an ideal sheaf of 2 points. We first need to describe $\EE$.

In general, given sheaves $A$ and $B$ on a variety $Y$, the universal extension $\EE$ on $\PP \Ext^1(A,B) \times Y$ is an extension
\[
0 \ra \pi^*B \rightarrow \EE \rightarrow\pi^*A(-1) \ra 0  .
\]
(See, for example \cite{huybrechts_geometry_2010}, Example 2.1.12, although we have twisted our family differently.)

Applying a relative version of this to our case, we have the universal family $\EE$ of $E$'s on $\PP(\GG) \times \PP(X)$ fitting into a sequence
\[
0 \rightarrow q^*T(-1) \rightarrow \EE \rightarrow (\pi')^*\FF \otimes p^*\OO_{\PP(\GG)}(-1) \rightarrow 0.
\]

We tensor with $q^*I_2$, and push down to obtain a triangle on $\PP(\GG)$:
\[
\dots \rightarrow R\Gamma_*(T(-1) \otimes I_2) \otimes \OO \rightarrow Rp_*( \EE \otimes q^*I_2) \rightarrow Rp_*(q^*I_2 \otimes (\pi')^*\FF) \otimes\OO_{\PP(\GG)}(-1) \ra \dots
\]

Since the left object is a complex of trivial bundles, it has trivial determinant.  Hence we conclude 
\begin{align*}
	\det^\vee(Rp_*(\EE \otimes q^*I_2)) &= \det^\vee\left(Rp_*( q^*I_2 \otimes (\pi')^*\FF) \otimes \OO_{\PP(\GG)}(-1)\right)  \\
	&= \det^\vee\left(Rp_*[p^*\pi^*\SS \otimes q^*I_2(-2) \rightarrow \CC^n \otimes q^*I_2]\right)\otimes \OO_{\PP(\GG)}(-1)\\
	&=  \OO_{\PP(\GG)}(n\chi(I_2) - m\chi(I_2(-2)) \otimes \pi^*(\det\SS)^{\chi(I_2(-2))} \\
	&= \OO_{\PP(\GG)}(1) \otimes \pi^*(\det \SS^\vee)^2
\end{align*}

\subsection{Computing sections}
Unfortunately, we will only be able to directly compute a subspace of the sections that are in the kernel. We conjecture that these are all of them.

We calculated the theta bundle to be $\OO_{\PP(\GG)}(1) \otimes \pi^*(\det \SS^\vee)^2$ in \cref{sec:id-theta-kron}. We have
\begin{align*}
	 H^0(\PP(\GG), \OO_{\PP(\GG)}(1) \otimes \pi^*(\det \SS^\vee)^2) &= H^0(\UU, \pi_*(\OO_{\PP(\GG)}(1) \otimes \pi^*(\det \SS^\vee)^2)) \\
	 &= H^0(\Gr, \GG^\vee \otimes (\det \SS^\vee)^2 )
\end{align*}
The first equality is a standard fact about sections and pushforwards, and the second follows from the projection formula, the formula $\pi_*(\OO_{\PP(\GG)}(1)) = \GG^\vee$, and the fact that the complement of $\UU$ in $\Gr$ has codimension at least two (which we will prove in Lemma \ref{ext-jump}).

We saw from \eqref{eq:G} that $\GG$ fits into a sequence
\[
(\CC^{n})^\vee \otimes H^0(T_{\PP(X)}(-1)) \rightarrow \SS^\vee \otimes H^0(T_{\PP(X)}(1)) \ra  \GG \ra 0
\] 
(exact only on the right). Hence we see that  $\GG^\vee \otimes (\det \SS^\vee)^2$ is the kernel of 
\[
 \SS \otimes (\det \SS^\vee)^2 \otimes  H^0(T_{\PP(X)}(1))^\vee \rightarrow (\det \SS^\vee)^2 \otimes \CC^{n} \otimes H^0(T_{\PP(X)}(-1))^\vee
\]
One can check $\SS \otimes (\det \SS^\vee)^2 =  S^{2^{m-1}, 1} \SS^\vee$. We can now apply \cref{prop:grass-sec}, and we see that $H^0(\Gr,\GG^\vee \otimes \OO_{\Gr}(2))$ is the kernel of
\begin{equation}
 S^{2^{m-1},1}( (\CC^{n})^\vee \otimes W) \otimes H^0(\PP^2,T(1))^\vee \rightarrow S^{2^m}((\CC^{n})^\vee \otimes W) \otimes \CC^{n} \otimes H^0(\PP^2, T(-1))^\vee   \label{eq:sec-are-kernel}
\end{equation}

Next, we want to to take the $SL(n)$ invariant parts.

\begin{lem} \label{sln-inv}
	\mbox{ }
	\begin{enumerate}[label=(\alph*)]
	\item 
	\[
	S^{2^{m-1},1}( (\CC^{n})^\vee \otimes W)^{SL(n)} = S^{m,m-1}(W)
	\]	
	\item
	\[
	(S^{2^m}( (\CC^{n})^\vee \otimes W) \otimes \CC^n)^{SL(n)} =S^{m+1,m-1}(W) \oplus S^{m,m-1,1}(W)
	\]
	\end{enumerate}
\end{lem}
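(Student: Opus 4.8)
The plan is to compute these $SL(n)$-invariants by passing to characters and using the Schur function machinery set up in Section \ref{sec:schur-functions}. The key identity is the classical fact that $SL(n)$-invariants inside a $GL(n)\times GL(W)$-representation are extracted by picking out, in the decomposition via the $GL(n)$-factor, exactly the summands $S^\mu(\CC^n)$ with $\mu$ a rectangle $(c^n)$ (i.e.\ a power of the determinant, which restricts to the trivial $SL(n)$-representation), and remembering the $GL(W)$-partner of each such summand. So the first step is to apply the Cauchy-type plethysm/Littlewood-Richardson decomposition to $S^{2^{m-1},1}\big((\CC^n)^\vee \otimes W\big)$ and to $S^{2^m}\big((\CC^n)^\vee\otimes W\big)\otimes \CC^n$ as $GL(\CC^n)\times GL(W)$-representations. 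For a partition $\nu$, one has $S^\nu(A\otimes B) = \bigoplus_{\alpha,\beta} (g^\nu_{\alpha\beta}) \, S^\alpha A \otimes S^\beta B$ where $g^\nu_{\alpha\beta}$ are Kronecker coefficients; here $\nu$ is a two-row partition ($2^{m-1},1$) or a rectangle ($2^m$), which keeps the combinatorics manageable, and I would record which $\alpha = (c^n)$ appear and with which $\beta$.

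Concretely, for part (a): $S^{2^{m-1},1}$ has $|\nu| = 2(m-1)+1 = 2m-1$, so any $S^\alpha(\CC^n)^\vee$ appearing has $|\alpha| = 2m-1$, and for $\alpha$ to be a rectangle $(c^n)$ we need $n \mid 2m-1$; since $n = 2m-1$ this forces $c=1$, i.e.\ $\alpha = (1^n)$, and $S^{1^n}(\CC^n)^\vee = (\det)^\vee$ is $SL(n)$-trivial. Thus the invariant part is $g^{(2^{m-1},1)}_{(1^n),\beta}\, S^\beta W$ summed over $\beta\vdash 2m-1$, and the Kronecker coefficient $g^{\nu}_{(1^n),\beta}$ is $1$ if $\beta = \nu' = $ conjugate of $\nu$ (since tensoring with $\det$ conjugates), and $0$ otherwise. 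The conjugate of $(2^{m-1},1)$ is $(m, m-1)$, giving part (a). For part (b): $S^{2^m}$ has $|\nu| = 2m$, so $S^\alpha(\CC^n)^\vee \otimes \CC^n$ contributes an $SL(n)$-invariant when $\alpha$, after tensoring the $GL(n)$-character by the standard character (i.e.\ adding a box in all possible ways, or using $S^\alpha \otimes \CC^n = \bigoplus_{\alpha^+} S^{\alpha^+}$), produces a rectangle $(c^n)$. Writing it the other way, $S^{2^m}((\CC^n)^\vee\otimes W)\otimes \CC^n$ contains $S^\alpha(\CC^n)^\vee\otimes S^\beta W\otimes \CC^n$; the $\CC^n$ can be absorbed into $(S^\alpha(\CC^n)^\vee)^\vee$-side, so we need $\alpha$ such that $(\CC^n)^{\otimes}$-twist of $S^\alpha$ contains $(\det)^{\pm}$, i.e.\ $\alpha$ is a rectangle with a box removed, $\alpha = (c^n) \setminus \square$; with $|\alpha| = 2m$ and $n = 2m-1$ the only possibility is $c=1$ giving $\alpha = (1^{n-1}) = (1^{2m-2})$. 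Then the invariant part is $\sum_\beta g^{(2^m)}_{(1^{2m-2}),\beta}\, S^\beta W$, and I would compute these Kronecker coefficients — here $(2^m)$ is a rectangle and one of the indices is a near-column, so there is a clean formula (the relevant $g$ equals the Littlewood--Richardson-type count which evaluates to $1$ for $\beta\in\{(m+1,m-1),(m,m-1,1)\}$ and $0$ otherwise).

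The main obstacle I anticipate is correctly pinning down those Kronecker coefficients $g^{(2^m)}_{(1^{2m-2}),\beta}$ — Kronecker coefficients are notoriously hard in general, but in this case one index is essentially a column $(1^{2m-2})$, so $g^{(2^m)}_{(1^{2m-2}),\beta}$ equals the multiplicity of $S^\beta$ in $S^{2^m}(\CC^n)^\vee$ after conjugating (tensoring with a near-determinant), which reduces to the plethysm $S^{2^m}$ of a tensor product restricted appropriately, or directly to a small Littlewood--Richardson computation. I would carry this out by the monomial-symmetric-function-to-Schur algorithm recalled in Section \ref{sec:schur-functions}: expand the relevant character as a sum of $m_\alpha$, then peel off Schur functions from the top weight down. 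As a sanity check, I would verify the total dimensions on both sides for small $m$ (e.g.\ $m = 1, 2, 3$) by direct computation. An alternative, perhaps cleaner, route for (a) avoiding plethysm: use $S^{2^{m-1},1}(A\otimes B) = \bigwedge$- and $S$-pieces via the known rule for $S^{2^{k},1}$ (a hook-plus-column shape) decomposing as a signed combination of $\bigwedge^\bullet(A\otimes B)$'s and $S^\bullet$'s, and then apply the ordinary Cauchy formula $\bigwedge^N(A\otimes B) = \bigoplus_{\lambda\vdash N} S^\lambda A\otimes S^{\lambda'}B$ to isolate the $\det$-summand directly; this makes the conjugation $(2^{m-1},1)\mapsto(m,m-1)$ transparent and is the approach I would write up.
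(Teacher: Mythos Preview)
Your approach to part (a) is correct and in fact slightly cleaner than the paper's. Both arguments use the conjugation property $g_{\mu,\nu,\lambda}=g_{\mu',\nu',\lambda}$ of Kronecker coefficients; you invoke the standard fact that $(1^n)$ is the sign representation (so $g_{(1^n),\beta,\lambda}=\delta_{\beta,\lambda'}$) directly, whereas the paper re-derives the special case $g_{(n),\nu,\lambda}=\delta_{\nu,\lambda}$ by a short tableau argument and then conjugates. Either way one lands on $\beta=(2^{m-1},1)'=(m,m-1)$.

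Part (b), however, contains a genuine slip. When you pass from $S^\alpha\bigl((\CC^n)^\vee\bigr)\otimes\CC^n$ to the condition ``$\alpha$ plus a box is a rectangle'', you have silently dropped the dual. The correct statement is that $\bigl(S^\alpha((\CC^n)^\vee)\otimes\CC^n\bigr)^{SL(n)}\neq 0$ iff $S^\alpha(\CC^n)\cong\CC^n\otimes(\det)^c$ as $GL(n)$-representations, i.e.\ $\alpha=(c+1,c^{n-1})$ is a rectangle with a box \emph{added}, not removed. With $|\alpha|=2m$ and $n=2m-1$ this forces $c=1$ and $\alpha=(2,1^{n-1})$, not $(1^{n-1})$. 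Your choice $\alpha=(1^{n-1})$ has size $2m-2$, so the Kronecker coefficient $g^{(2^m)}_{(1^{2m-2}),\beta}$ you write down is not even defined (the three partitions must have equal size). Once you correct $\alpha$ to $(2,1^{n-1})$, the computation to be done is exactly the one the paper carries out: $g_{(2,1^{n-1}),\beta,(2^m)}=g_{(n,1),\beta',(2^m)}$ by conjugation, and the paper extracts these by computing the coefficients of $m_{(n+1)}(x)$ and $m_{(n,1)}(x)$ in $s_{(2^m)}(xw)$ via the tableau description and then converting to the Schur basis. Your sketch (``reduces to a small LR computation'', ``peel off Schur functions from the top weight down'') is the right shape but you will need to actually carry out that two-term expansion; the answer $\beta\in\{(m+1,m-1),(m,m-1,1)\}$ does not drop out without it.
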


\begin{proof} 
We need to decompose the left hand sides as $GL(\CC^n) \times GL(W)$ modules, and then look at the parts where the $\CC^n$ part is just a power of the determinant.

In general, $S^\lambda(\CC^n \otimes W)$ can be decomposed as a $GL(\CC^n) \times GL(W)$ module:
\[
S^\lambda(\CC^n \otimes W) = \bigoplus_{\mu,\nu} \left(S^\mu(\CC^n) \otimes S^\nu(W)\right)^{\oplus g_{\mu,\nu,\lambda}}
\]
The numbers $g_{\mu,\nu,\lambda}$ are known as Kronecker coefficients, and it remains an important open problem to find a formula or combinatorial interpretation for them. 

However, one can compute them as follows using Schur functions (see \cref{sec:schur-functions}).
Let $x=(x_1, \dots, x_n)$ and $w=(w_1, \dots, w_6)$ be variables corresponding to $\CC^n$ and $W$, respectively. Write $xw = (x_1w_1, x_1w_2, \dots, x_nw_1, \dots, x_nw_6)$. Then, expand $s_\lambda(xw)$, and  collect the coefficients of monomials in the $x$-variables. Each of these coefficients is a symmetric function in $w$-variables, so it can be expressed as a linear combination of Schur functions. Also, by symmetry, $x$-monomials that differ by a permutation of the variables have the same coefficient, so we have 
\begin{align}
s_\lambda(xw) &= \sum_\alpha m_\alpha(x) \left(\sum_\nu d^{\alpha,\nu}_\lambda s_\nu(w) \right) \label{eq:mn}\\
	 &= \sum_\nu \left( \sum_\alpha d^{\alpha,\nu}_\lambda m_\alpha(x)\right) s_\nu(w)
\end{align}
Here $m_\alpha$ is the monomial symmetric function.
Now, each sum $\sum_\alpha d^{\alpha,\nu}_\lambda m_\alpha(x)$ is a symmetric polynomial in $x$, so it may be written as a sum of Schur functors, which gives the Kronecker coefficients:
\begin{align*}
	s_\lambda(xw) &= \sum_\nu \left( \sum_\mu g_{\mu,\nu,\lambda} s_\mu(x)\right) s_\nu(w) \\
	&= \sum_{\nu,\mu} g_{\mu,\nu,\lambda} s_\mu(x)s_\nu(w).
\end{align*}

For (a), let $\lambda = \lp 2^{m-1},1\rp $. We need to show that $g_{1^n, \nu, \lambda} = 1$ for $\nu = \lp m,m-1\rp $ and $0$ for all other $\nu$.

By $\mu'$ we denote the conjugate  (or transpose) of $\mu$. A nice property of the Kronecker coefficients is that $g_{\mu,\nu,\lambda} = g_{\mu',\nu',\lambda}$ \cite{pak_complexity_2017}. So we will instead check that $g_{n, \nu, \lambda} = 1$ for $\nu = \lambda$ and $0$ for all other $\nu$. (Here $n$ is the length one partition $\lp n\rp $.) The advantage is that it is easier to extract a highest weight than a lowest weight.

We now compute the coefficient of $m_n(x)$ in \eqref{eq:mn}. Consider a semistandard Young tableau of shape $\lambda$ filled with entries from $xw$, so that every box has an $x_1$ in it. It is easy to see that the sum of the content over all such tableaux is $x_1^n s_\lambda(w)$. It follows that 
\begin{align*}
	s_\lambda(xw) &= m_n(x)s_\lambda(w) + \lw(x) \\
	&= s_n(x)s_\lambda(w) + \lw(x)
\end{align*}
where $\lw(x)$ indicate terms of lower weight in $x$.

For (b), it is an exercise with the LR rule to show that  $S^{2,1^{n-1}}(\CC^n) \otimes (\CC^n)^\vee = S^{1^n}(\CC^n)$, and $S^\lambda(\CC^n) \otimes (\CC^n)^\vee$ does not contain a summand $S^{1^n}(\CC^n)$ for any other $\lambda$.  From this it follows that (b) is equivalent to the calculation of $g_{\lp 2,1^{n-1}\rp ,\lp m+1,m-1\rp,2^m } = 1$, $g_{\lp 2,1^{n-1}\rp ,\lp m,m-1,1\rp,2^m } =1$, and $g_{\lp 2,1^{n-1}\rp , \nu,2^m} = 0$ for any other $\nu$. Again, we use the conjugate property of the Kronecker coefficients to see that this is equivalent to $g_{\lp n,1\rp ,\lp 2^{m-1},1,1\rp , 2^m} = 1$, $g_{\lp n,1\rp , \lp 3,2^{m-2},1\rp , 2^m} = 1$, and $g_{\lp n,1\rp , \nu, 2^m} = 0$ for any other $\nu$.



The coefficient of $m_{n+1}(x)$ is $s_{2^m}(w)$, by the same argument as in part (a).  To find the coefficient of $m_{n,1}(x)$, note that any semistandard Young tableau of shape $2^m$ with $x$ part of the content equal to $x_1^nx_2$ must have the $x_2$ in the lower right corner. To fill in the $w$'s, notice that we can pick any semistandard Young tableau filled with $w$'s of shape $\lp 2^{m-1},1\rp $ and insert them in the boxes with $x_1$ in them, and put any $w$ in the box with $x_2$. It follows that the sum of the content of such tableaux is $s_{2^{m-1},1}(w)s_1(w)$. So we have:
\begin{align*}
	s_{2^m}(xw) = &m_{n+1}(x)s_{2^m}(w) + m_{n,1}(x)s_{2^{m-1},1}(w)s_1(w) + \lw(x) \\
				= &m_{n+1}(x)s_{2^m}(w) + m_{n,1}(x)\left[s_{2^m}(w) + s_{2^{m-1},1,1}(w) + s_{3,2^{m-2},1}(w)\right] + \lw(x)\\
				= &s_{2^m}(w)\left[m_{n+1}(x) + m_{n,1}(x) \right] + s_{2^{m-1},1,1}(w)m_{n,1}(x)  + \\ &s_{3,2^{m-2},1}(w)m_{n,1}(x)   + \lw(x) \\
				= &s_{2^m}(w)\left[s_{n+1}(x) + 0s_{n,1}(x) \right] + s_{2^{m-1},1,1}(w)s_{n,1}(x) + \\ &s_{3,2^{m-2},1}(w)s_{n,1}(x)  + \lw(x)
\end{align*}
where the $\lw(x)$ represents terms of weight lower than $\lp n,1\rp $ in $x$. The desired result follows.
\end{proof}

It is an easy computation with the Euler sequence to see that $H^0(\PP(X),T(-1))^\vee = X^\vee$ and $H^0(\PP(X),T(1))^\vee= X^\vee \otimes W - X$.

We formally subtract the right hand side of \eqref{eq:sec-are-kernel} from the left, obtaining:
\[
S^{2^{m-1},1}(\CC^{n} \otimes W) \otimes H^0(\PP^2,T(1))^\vee - S^{2^m}(\CC^{n} \otimes W) \otimes (\CC^{n})^\vee \otimes H^0(\PP^2, T(-1))^\vee
\] 
Now, taking $SL(n)$invariants and using \cref{sln-inv}, we obtain: 
\begin{align}
	=& S^{m,m-1}(W) \cdot (X^\vee \cdot W - X) - (S^{m+1,m-1}(W) + S^{m,m-1,1}(W))\cdot X^\vee \nonumber \\
	=& \left(S^{m+1,m-1}(W)  +  S^{m,m}(W)  + S^{m,m-1,1}(W)\right) \cdot X^\vee - S^{m,m-1}(W)\cdot X - \nonumber \\
	&\;\;\;\;\;\;\;\left(S^{m+1,m-1}(W) + S^{m,m-1,1}(W)\right) \cdot X^\vee  \nonumber \\
	=& S^{m,m}(W) \cdot X^\vee - S^{m,m-1}(W)\cdot X \label{eq:sec-calc}
\end{align}
If we express the last line above as a sum of Schur functors of $X$, then it follows that the summands with positive coefficients must be contained in the kernel of the map \eqref{eq:sec-are-kernel}, which kernel is the sections of the theta bundle.

In order to eliminate the dual, we recall that $X^\vee \otimes \det X = S^{1,1}(X)$ and multiply  \eqref{eq:sec-calc} by $\operatorname{det}(X)=S^{1,1,1}(X)$ and obtain
\[
S^{m,m}(W) \cdot S^{1,1}(X) - S^{m,m-1}(W) \cdot S^{2,1,1}(X)
\]

It follows easily from the Littlewood-Richardson rule that if $\lambda$ has $k$ non-zero parts and $\nu$ has fewer than $k$ non-zero parts, then $c^\nu_{\lambda, \mu} = 0$ (for any $\mu$). Thus we see that there is no summand of the form $S^{a,b}(X)$ in $S^{m,m-1}(W) \cdot S^{2,1,1}(X)$.

Hence, in order to prove   of \cref{thm-sec}\ref{thm-sec-kron}, we only need to check that $S^{m,m}(W) \cdot S^{1,1}(X)$ contains the claimed summands. It is an exercise with the Littlewood-Richardson rule to see that $S^{a,b}(X) \cdot S^{1,1}(X)$ contains a summand  $S^{a+1,b+1}(X)$. Thus, \cref{thm-sec}\ref{thm-sec-kron} follows from the next lemma.

\begin{lem} \label{summand}
For $0 \le k \le \frac m2$, there is a summand of $S^{3m-2k,m+2k}(X)$ in $S^{m,m}(W)=S^{m,m}(S^2(X))$.
\end{lem}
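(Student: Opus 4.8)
The plan is to compute the $GL(X)$-character of $S^{m,m}(S^2X)$ by a symmetric-function manipulation, to reduce the multiplicity of $S^{3m-2k,\,m+2k}(X)$ in it to a difference of two sums of Littlewood--Richardson coefficients, and then to evaluate that difference using the two-row Littlewood--Richardson rule. I would work with symmetric functions in three variables (so $s_\lambda=0$ whenever $\ell(\lambda)>3$), which is harmless since $\rho:=(3m-2k,\,m+2k)$ has only two parts.

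First I would record the character. The Jacobi--Trudi identity $s_{(m,m)} = h_m^2 - h_{m+1}h_{m-1}$ (with $h_k = s_{(k)}$), which holds as an identity in the representation ring $R(GL(V))$ for every $V$, gives
\[
[S^{m,m}(V)] = [S^m(V)]^2 - [S^{m+1}(V)]\cdot[S^{m-1}(V)].
\]
Taking $V = S^2X$, restricting to $R(GL(X))$, and using the Littlewood decomposition $S^k(S^2X) = \bigoplus_{\lambda\vdash k}S^{2\lambda}(X)$ recalled above, I obtain $\ch S^{m,m}(S^2X) = (\sum_{\mu\vdash m}s_{2\mu})^{2} - (\sum_{\alpha\vdash m+1}s_{2\alpha})(\sum_{\beta\vdash m-1}s_{2\beta})$. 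Therefore the multiplicity of $S^\rho(X)$ in $S^{m,m}(S^2X)$ equals $A-B$, where $A = \sum_{\mu,\nu\vdash m}c^{\rho}_{2\mu,2\nu}$ and $B = \sum_{\alpha\vdash m+1,\ \beta\vdash m-1}c^{\rho}_{2\alpha,2\beta}$; because $\rho$ has two parts, only the terms with $\mu,\nu$ (resp. $\alpha,\beta$) of length at most two actually contribute.

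Next I would apply the two-row Littlewood--Richardson rule. When $\rho,\sigma,\tau$ all have at most two parts, any Littlewood--Richardson filling of $\rho/\sigma$ has first row all $1$'s and second row of the form $1\cdots1\,2\cdots2$; for fixed $\tau$ this filling is forced, so $c^{\rho}_{\sigma,\tau}\in\{0,1\}$, and a direct check shows $c^{\rho}_{\sigma,\tau}=1$ precisely when $|\sigma|+|\tau| = |\rho|$, $\sigma_2+\tau_2\le\rho_2$, $\sigma_1+\tau_2\ge\rho_2$, and $\sigma_2+\tau_1\ge\rho_2$. Parametrizing a two-part partition by its first part and feeding in these inequalities, $A$ becomes the number of integer points $(u,v)$ with $u,v\in[\lceil m/2\rceil,m]$, $|u-v|\le m/2-k$, $u+v\ge 3m/2-k$; and $B$, after the substitution $\alpha_1\mapsto\alpha_1-1$, becomes the number of integer points $(u,v)$ with $u\in[\lceil(m-1)/2\rceil,m]$, $v\in[\lceil(m-1)/2\rceil,m-1]$, $|u-v|\le m/2-k$, $u+v\ge 3m/2-k-1$.

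Finally I would compare the two regions by partitioning each count according to the value of $u+v$. The part of the $B$-region with $u+v\ge 3m/2-k$ coincides with the part of the $A$-region with $v\le m-1$ --- here one uses that $u+v\ge 3m/2-k$ together with $|u-v|\le m/2-k$ already forces $u,v\ge\lceil m/2\rceil$ --- so $A-B$ reduces to the number of points of the $A$-region with $v=m$ minus the number of points of the $B$-region on the minimal line $u+v=3m/2-k-1$. A short count evaluates these as $\lfloor m/2\rfloor-k+1$ and $\lfloor m/2\rfloor-k$, whence $A-B=1$, and in particular the multiplicity is positive (in fact exactly $1$), which is the claim. I expect this last step to be the main obstacle: it is entirely elementary, but comparing the two polygonal regions and carrying out the two boundary counts requires some care about the parity of $m$ and about the boundary case $k=\lfloor m/2\rfloor$.
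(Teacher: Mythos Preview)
Your argument is correct and follows the same overall architecture as the paper's proof: both reduce the multiplicity of $S^{3m-2k,m+2k}(X)$ in $S^{m,m}(S^2X)$ to the difference
\[
\sum_{\mu,\nu\vdash m} c^{\rho}_{2\mu,2\nu}\;-\;\sum_{\alpha\vdash m+1,\ \beta\vdash m-1} c^{\rho}_{2\alpha,2\beta},
\]
and then evaluate this difference using the explicit two-row Littlewood--Richardson rule. The differences are in how each step is carried out. For the first step, the paper invokes a formula of de Boeck and Paget (their Theorem~3.1) for the decomposition of $S^{a,b}(S^2X)$; you instead derive the same expression directly from the Jacobi--Trudi identity $s_{(m,m)}=h_m^2-h_{m+1}h_{m-1}$ combined with Littlewood's plethysm $S^k(S^2X)=\bigoplus_{\lambda\vdash k}S^{2\lambda}(X)$, which makes the argument self-contained. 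For the second step, the paper sets up a partial bijection $(\alpha,\beta)\mapsto(\alpha^+,\beta^-)$ between the two sums and counts the terms on each side that fail to cancel ($k+1$ on the left, $k$ on the right); you instead translate both sums into lattice-point counts over explicit polygonal regions in the $(u,v)$-plane and compare the regions directly, peeling off the line $v=m$ from $A$ and the minimal $u+v$-line from $B$. Your count $(\lfloor m/2\rfloor-k+1)-(\lfloor m/2\rfloor-k)=1$ matches the paper's $(k+1)-k=1$. Both endgames are elementary; yours is a bit more uniform across parities once the regions are set up, while the paper's bijective argument makes the cancellation mechanism more transparent.
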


\begin{remark}
The proof of \cref{summand} will involve some work with the Littlewood-Richardson rule. From now on, we abbreviate ``Littlewood-Richardson" as LR. We the reader to any standard reference, for example \cite{leeuwen_littlewood_1999}, for definitions. 

Consider a skew diagram $\lambda / \mu$, where $\lambda$ (and thus also $\mu$) has two or fewer rows. 
The coefficient $c^\lambda_{\mu,\nu}$ will of course be zero unless $\ell(\nu) \le 2$ and $|\nu| + |\mu| = |\lambda|$. Given such a partition $\nu$, there is at most one LR tableau with shape $\lambda / \mu$: put 
$\nu_2$ 2's on the right of the second row, and 1's elsewhere. This will produce a valid LR tableau if and only if 
\begin{enumerate}[label=(\alph*)]
	\item The number of 2's is less than or equal to the number one 1's on the first row, or in other words: $\nu_2 \le \lambda_1 -\mu_1$. \label{few2}
	\item  We don't get two 1's in a column, in other words $\mu_1+\nu_2 \ge \lambda_2$. \label{no1}
\end{enumerate}

\end{remark}

\begin{proof}[Proof of \cref{summand}]
In Theorem 3.1 of \cite{boeck_decompositions_2014}, de Boeck and Paget give a formula to compute the multiplicity of $S^\lambda(X)$ in $S^{a,b}(S^2 X)$. Applied to our situation, the formula says that for any $\lambda$, the coefficient of $S^\lambda(X)$ is 
\begin{equation}
 \sum_{\alpha, \beta} c_{2\alpha,2\beta}^\lambda - \sum_{\gamma,\delta} c_{2\gamma,2\delta}^\lambda. \label{eq:bp}
\end{equation}
where the first the sum is over $\alpha, \beta$ partitions of $m$, and the second sum is over all $\gamma$ partitions of $m+1$ and $\delta$ partitions of $m-1$. We will apply the formula when $\lambda = \lp 3m-2k,m + 2k\rp $.

 Given a partition $\alpha=\lp \alpha_1,\alpha_2\rp $, let $\alpha^+ = \lp \alpha_1+1,\alpha_2\rp $, and $\alpha^-=\lp \alpha_1-1,\alpha_2\rp $. Of course $\alpha^-$ is not always defined.

There is a partial correspondence between the set of pairs of  partitions $(\alpha, \beta)$ and the set of pairs $(\gamma, \delta)$, given by $(\alpha, \beta) \mapsto (\alpha^+, \beta^-)$. In many cases, we will see that $c_{2\alpha,2\beta}^\lambda = c_{2\alpha^+,2\beta^-}^\lambda$, giving some cancellation in \eqref{eq:bp}. We now quantify the failure of this cancellation. For simplicity of notation, we do the argument first for $m$ even.


Suppose that $c_{2\alpha,2\beta}^\lambda =1$, so there exists an LR tableau of skew shape $\lambda / 2\alpha$ and weight $2\beta$. We can attempt to construct a LR tableau of skew shape $\lambda / 2\alpha^+$ and weight $2\beta^-$ by simply deleting 2 boxes (if possible) from the left side of the top row. Condition \ref{no1} (no column of ones) for the new tableau is automatic, and the new tableau will fail condition \ref{few2} (not too many 2's) if and only if $2\beta_2=\lambda_1-2\alpha_1$ in the original tableau. This also covers the case where there are no boxes in the first row, that is when $2\alpha_1=\lambda_1$. (The situation $2\beta_2+1=\lambda_1-2\alpha_1$ doesn't occur due to parity.)
 For a fixed $k$, the possibilities are $\alpha = \lp m-t,t\rp $ and $\beta = \lp m-\beta_2, \frac12\lambda_1-\alpha_1\rp  = \lp \frac{m}{2} + (k-t),\frac{m}{2}-(k-t)\rp $ for $t$ from $0,1, \dots k$. (When $t=k$, this covers the cases where $\beta=\lp \frac m2, \frac m2\rp $ so that $\beta^-$ is not defined.) So we see that that there are $k+1$ summands from the first sum of \eqref{eq:bp} that are not canceled out in the second sum via this correspondence. 
 
 Now suppose that $c_{2\gamma,2\delta}^\lambda=1$, so there exists an LR tableau of skew shape $\lambda / 2\gamma$ and weight $2\delta$. We can attempt to construct an LR tableau of skew shape $\lambda / 2\gamma^-$ and weight $2\delta^+$ by adding two boxes onto the the left side of the top row and putting 1's in them. Condition \ref{few2} (not too many 2's) is automatic, but condition \ref{no1} (no column of ones) will fail if and only if in the original tableau $2\gamma_1 + 2\delta_2 = \lambda_2$. For a fixed $k$, we get $\delta=\lp m-1-t,t\rp $, $\gamma=\lp \frac m2 + (k-t), \frac m2 + 1 - (k - t)\rp $ for $t=0,1, \dots, (k-1)$. So we see that there are $k$ summands from the second sum in \eqref{eq:bp} that are not canceled out in the first sum.
 
 Putting the previous two paragraphs together, we see that the coefficient of $S^\lambda(X)$ is 1, as desired.
 
 The case when $m$ is odd can be handled in the same way. In this case, the things that don't cancel from the left sum in \eqref{eq:bp} satisfy $2\beta_2 + 1 =\lambda_1-2\alpha_1$, giving $\alpha=\lp m-t,t\rp $ and $\beta = \lp \frac{m+1}{2} + (k-t),\frac{m-1}{2} - (k-t)\rp $ for $t=0, \dots, k$. The terms that don't cancel from the right sum in \eqref{eq:bp} are those that satisfy $2\gamma_1 + 2\delta_2 = \lambda_2+1$, giving $\delta=\lp m-1-t,t\rp $ and $\gamma=\lp \frac{m+1}{2} +(k-t), \frac{m-1}{2} + 1 - (k-t)\rp $ for $t=0,\dots,(k-1)$.
\end{proof}


\section{Quot schemes and injectivity} \label{sec:quot}

\subsection{Finite Quot schemes} The finite Quot scheme idea comes originally from \cite{marian_level-rank_2007}, where is was used to prove strange duality for curves. Bertram, Goller, and the author applied this method to surfaces in \cite{bertram_potiers_2016}.  The idea is as follows.   

Let $e$ and $f$ be candidates for strange duality, as in the introduction. For $(a,b,c) \in H^*(\PP^2,\QQ)$, define $(a,b,c)^\vee = (a,-b,c)$ so that $\ch(E^\vee) = \ch(E)^\vee$. Let $v = e^\vee + f$ and suppose 
$V$ is a coherent sheaf with $\ch(V) = v$. 
Then each element of the Grothendieck quot scheme $\Quot(V,f)$ of coherent sheaf quotients of $V$ of Chern character $f$ corresponds to an exact sequence
\begin{equation}
0 \rightarrow \hat E \rightarrow V \rightarrow F \rightarrow 0 \label{eq:evf}
\end{equation}
where $\ch F = f$, $\ch \hat E = e^\vee$, and $\chi(\hat E^\vee \otimes F) = \chi(e \cdot f) = 0$ is the expected dimension of $\Quot(V,f)$. 

Now suppose a sufficiently general $V$ may be chosen so that $\Quot(V,f)$ is finite and reduced, and so that each quotient
\begin{equation}
0 \rightarrow \hat E_i \rightarrow V \rightarrow F_i \rightarrow 0 \label{eq:evfi}
\end{equation}
has the property that $F_i$ is torsion free and $\hat E_i^\vee$ and $F_i$ correspond to points in the respective moduli spaces. ($\hat E_i$ is locally free since it is the kernel of a surjective map from a locally free sheaf to a torsion free sheaf.) The finite and reduced condition implies that $\Hom(\hat E_i,F_i) = 0$, which says that $\Theta_{F_i}([\hat E_i^\vee])$ does not vanish. Furthermore, by composing maps from \eqref{eq:evfi}  we see that $\hom(\hat E_i, F_j) \neq 0$ for $i \neq j$ (use the torsion free condition here).  This shows that $\Theta_{F_i}([\hat E_j^\vee])$ does vanish. Hence the ``matrix'' $\Theta_{F_i}([\hat E_j^\vee])$ is diagonal with non-zero entries on the diagonal, and we conclude that the sections $\Theta_{F_i}$ are linearly independent. As we observed below \eqref{eq:sd} in the introduction, a section $\Theta_{F}$ is in the image of $\operatorname{SD}_{e,f}$ for any $F$. So the rank of $SD_{e,f}$ is at least the number of points in the Quot scheme. In particular, if the length of the Quot scheme is equal to the dimension of $H^0\big(M_S(e^{\vee}),{\mathcal O}(\Theta_f)\big)$, we can conclude that $\operatorname{SD}_{e,f}$ is surjective, and if the length of the Quot scheme is equal to the dimension of $H^0\big(M_S(f),{\mathcal O}(\Theta_{e^{\vee}})\big)^*$, we can conclude that $\operatorname{SD}_{e,f}$ is injective.

There are two main difficulties in using the Quot scheme method. First, one must be able to check that both the kernels and quotients appearing in sequences \eqref{eq:evfi} are points of your moduli spaces. Addressing the question of quotients, we have the even stronger,
\begin{thm}[Theorem B, \cite{bertram_potiers_2016}] \label{thm:B}
	Suppose $k\ge 1$, $r \ge 2$, and $d\gg 0$. Let $e = (r, d, s)$ and $f = 1 - k \cdot h^2$, where $s$ is determined by the condition $\chi(e \cdot f) = 0$. Let $v = e^\vee + f$.
	
	 Let $V$ be a general stable vector bundle on $\PP^2$ with $\ch(V)=v$ as above. Then the quot scheme $\Quot(V,(1,0,-k))$ is finite and reduced, and each quotient is an idea sheaf of a reduced subscheme.
\end{thm}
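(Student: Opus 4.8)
The plan is to realize a general such $V$ as an extension $0 \to \hat E \to V \to F \to 0$, to package all such extension data into one incidence scheme, to match its dimension against $\dim M(v)$, and then to use generic smoothness in characteristic zero to promote ``finite'' to ``finite and reduced''. Note first that, since $f = 1 - kh^2$, a torsion-free quotient of Chern character $f$ is the ideal sheaf $I_Z$ of a length-$k$ subscheme $Z$, and the kernel is locally free of Chern character $v - f = e^\vee$; since the expected dimension $\chi(\hat E^\vee \otimes F) = \chi(e\cdot f)$ is $0$, showing $\Quot(V,f)$ is finite and reduced amounts to showing $\Hom(\hat E, F) = 0$ at every quotient (this kills the tangent space and forces the point to be isolated).

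The first and hardest step is to establish, for $d \gg 0$, the \emph{building blocks}: for a general stable $\hat E$ of character $e^\vee$ and $k$ general reduced points $Z$, the groups $\Hom(\hat E, I_Z)$, $\Ext^2(\hat E, I_Z)$, $\Hom(I_Z, \hat E)$, $\Ext^2(I_Z, \hat E)$ all vanish — the $\Ext^2$'s by Serre duality together with the fact that $\mu(\hat E) = -d/r$ is very negative — so that $\ext^1(I_Z, \hat E) = -\chi(I_Z^\vee \otimes \hat E)$; and, moreover, a general extension of $I_Z$ by $\hat E$ is a \emph{stable, locally free} sheaf $V$ with $\ch V = v$, while conversely a general stable bundle of character $v$ arises from such an extension with stable kernel. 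Local freeness of the general extension is a Serre-construction-type statement (for $d \gg 0$ the extension class can be chosen non-degenerate at each point of $Z$); stability is a Harder--Narasimhan argument, any destabilizing subsheaf of $V$ being controlled by the slopes of $\hat E$ and $I_Z$ once $d$ is large.

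Granting the building blocks, I would let $\mathcal Q$ be the incidence scheme parametrizing $(V, [V \twoheadrightarrow F])$ with $V$ and its kernel stable. Over a dense open it fibers over $M(e^\vee) \times (\PP^2)^{[k]}$ with fibre $\PP\Ext^1(I_Z, \hat E)$, so that $\dim \mathcal Q = \dim M(e^\vee) + 2k + \ext^1(I_Z,\hat E) - 1$. Using $\dim M(\bullet) = 1 - \chi(\bullet, \bullet)$, the relation $[V] = [\hat E] + [F]$ in $K$-theory, and the orthogonality $\chi(e \cdot f) = 0$, a short computation gives $\dim \mathcal Q = \dim M(v)$. Since the forgetful map $\mathcal Q \to M(v)$ is dominant (by the building blocks) with source and target of equal dimension, its general fibre is finite; generic smoothness in characteristic $0$ makes it finite and reduced; and, again by the building-block genericity, that fibre equals $\Quot(V,f)$, which also yields $\Hom(\hat E_i, F_i) = 0$ for every $i$.

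It remains to see that \emph{every} quotient of a general $V$ is $I_Z$ with $Z$ reduced, and I would handle this with two further dimension counts of the same shape: a quotient with torsion would factor through a quotient of strictly smaller $\ch_2$, whose associated incidence variety has dimension $< \dim M(v)$ (the relevant Euler characteristic has become negative) and so does not dominate $M(v)$; and a quotient $I_Z$ with $Z$ non-reduced would embed in $I_{Z'}$ for a shorter reduced $Z'$, again a strictly lower-dimensional incidence condition on $V$. \textbf{The main obstacle} is the building-block step: proving the four vanishings uniformly in $d$ and — more seriously — proving both that the general extension of $I_Z$ by $\hat E$ is a stable bundle and that the forgetful map $\mathcal Q \to M(v)$ is dominant (that a general stable bundle of character $v$ really does admit a stable sub-bundle of character $e^\vee$ with torsion-free quotient). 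That is where one must actually use slope inequalities, Serre duality, and the geometry of the moduli space (such as Bogomolov-type bounds) for $d \gg 0$; once these are in hand, everything else is dimension bookkeeping driven by the single input $\chi(e \cdot f) = 0$.
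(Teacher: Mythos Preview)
This theorem is not proved in the paper under review; it is quoted verbatim from \cite{bertram_potiers_2016} (note the attribution in the theorem heading), and the present paper only \emph{applies} it. So there is no ``paper's own proof'' to compare against here. What the paper does tell us, in Section~6.2, is that the proof in \cite{bertram_potiers_2016} ``uses what is there called a $\dagger$-resolution of $V$ in an essential way'' --- i.e.\ an explicit two-term resolution of $V$ by sums of line bundles, which converts the analysis of surjections $V \twoheadrightarrow I_Z$ into concrete linear-algebra conditions on matrices of forms (cf.\ the diagrams \eqref{eq:quot} and the matrix \eqref{eq:ABmat} later in this paper, which imitate that style of argument for the kernels).

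Your sketch takes a genuinely different, more moduli-theoretic route: an incidence correspondence $\mathcal Q \to M(v)$ plus a dimension count and generic smoothness. The overall shape is reasonable, and your dimension identity $\dim\mathcal Q = \dim M(v)$ does follow from $\chi(e\cdot f)=0$ once the Ext-groups behave. But the step you yourself flag as the ``main obstacle'' is not a detail --- it is essentially the whole theorem. Specifically: (i) showing that a general extension of $I_Z$ by a general stable $\hat E$ is a \emph{stable} bundle, and (ii) showing that the forgetful map is \emph{dominant}, i.e.\ that a general stable $V$ actually admits such a quotient, are not consequences of slope inequalities and Serre duality alone; in the cited source these are handled precisely via the explicit resolution, not by abstract HN arguments. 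Likewise, your final paragraph's plan to exclude torsion quotients and non-reduced $Z$ by ``strictly lower-dimensional incidence conditions'' needs more than asserted: one must control $\ext^1(I_{Z'},\hat E)$ (or the analogous group) uniformly over the bad locus to bound the dimension of the corresponding incidence variety, and the jump in $\ext^1$ can in principle offset the drop in the base dimension. Again, this is where the resolution-based approach in \cite{bertram_potiers_2016} provides the missing leverage.
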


Secondly, we must be able to compare the cardinality of the Quot scheme to the dimensions of the spaces of sections. In \cite{bertram_potiers_2016} an expected size of the Quot scheme was computed using multiple point formulas from \cite{marangell_general_2010} and \cite{berczi_thom_2012}. It is difficult to address the question of when this count is correct, since the multiple point formulas require a genericity  that cannot be verified for algebraic maps. In addition, the computation of these numbers is slow, and the formulas are limited to $k \le 7$. 

In \cite{johnson_universal_2017}, the present author suggested a way to interpret the expected size of the Quot scheme as a Chern number of a certain tautological vector bundle on $(\PP^2)^{[k]}$, and conjectured that these numbers matched the Euler characteristics of the appropriate theta bundles on $(\PP^2)^{[k]}$. In fact, this conjecture is relevant to all surfaces. This conjecture has now been verified computationally up to $k=11$. (We only need $k=2$ for this paper.) 

In \cite{goller_rank1_2019}, Goller and Lin proved that, under suitable hypothesis on $V$, this Chern number does in fact compute the number of points on the Quot scheme. Combining this with \cref{thm:B} and the computational verification of the conjecture from \cite{johnson_universal_2017}, they obtained:
\begin{thm}[Theorem 1.2, \cite{goller_rank1_2019}] \label{thm:goller-lin}
	Assume $1 \le k \le 11$, $r \ge 2$, and $d\gg 0$. Let $V$ be a general stable sheaf as in \cref{thm:B}. 
	
	Then
	\[
	\#\Quot(V,k) = h^0((\PP^2)^{[k]}, \OO(\Theta_e))
	\]
\end{thm}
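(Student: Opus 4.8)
The plan is to deduce the equality from three ingredients, following \cite{goller_rank1_2019}; write $\Quot(V,k) := \Quot(V,(1,0,-k))$. By \cref{thm:B}, for $d \gg 0$ and a general stable $V$ with $\ch V = v = e^\vee + f$, the scheme $\Quot(V,k)$ is finite and reduced and every quotient is the ideal sheaf $I_Z$ of a reduced length-$k$ subscheme $Z \subset \PP^2$, so that $\#\Quot(V,k)$ is literally the length of this reduced zero-dimensional scheme. The three steps are: (i) rewrite this length as a Chern number $\int_{(\PP^2)^{[k]}} c_{2k}(\mathcal{W})$ for a suitable tautological bundle $\mathcal{W}$ on the Hilbert scheme; (ii) invoke the computational verification --- valid for $1 \le k \le 11$, hence in particular for the case $k = 2$ needed here --- of the conjecture of \cite{johnson_universal_2017} to identify this Chern number with $\chi\bigl((\PP^2)^{[k]}, \OO(\Theta_e)\bigr)$; (iii) use $d \gg 0$ to replace $\chi$ by $h^0$.

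For step (i), giving a surjection $V \twoheadrightarrow I_Z$ with $Z$ of length $k$ is the same datum as a nonzero morphism $\phi \colon V \to \OO$, taken up to scalar, whose image is an ideal sheaf $I_{Z(\phi)}$ with $Z(\phi) \subset \PP^2$ punctual of length $k$; here $Z(\phi)$ is the scheme-theoretic zero locus of $\phi$ regarded as a section of $V^\vee$. Since $d \gg 0$ one has $H^{>0}(\PP^2, V^\vee) = 0$, and Riemann--Roch gives $h^0(\PP^2, V^\vee) = 1 + (r-1)k$. On $(\PP^2)^{[k]}$, with universal subscheme $\mathcal{Z}$, form the rank $(r+1)k$ tautological bundle $\mathcal{W}$ obtained by pushing forward the restriction of $V^\vee$ to $\mathcal{Z}$, together with the evaluation map of vector bundles
\[
\mathrm{ev} \colon H^0(\PP^2, V^\vee) \otimes \OO_{(\PP^2)^{[k]}} \longrightarrow \mathcal{W},
\]
restricting a global section to each length-$k$ subscheme. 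A point $Z$ is hit by the finite morphism $\Quot(V,k) \to (\PP^2)^{[k]}$, $[V \to I_Z] \mapsto Z$, precisely when some nonzero section of $V^\vee$ restricts to zero on $Z$ and acquires no further zeros; thus, set-theoretically, $\Quot(V,k)$ is the locus where $\mathrm{ev}$ fails to be injective, whose expected codimension is $(r+1)k - (1 + (r-1)k) + 1 = 2k = \dim (\PP^2)^{[k]}$. The technical heart of \cite{goller_rank1_2019} is to show that for $V$ as in \cref{thm:B} this degeneracy locus is, as a scheme, exactly the reduced $\Quot(V,k)$ --- so, in particular, that the excess locus on which a section of $V^\vee$ vanishes on something strictly larger than a length-$k$ punctual scheme is empty --- so that the Thom--Porteous formula gives $\#\Quot(V,k) = \int_{(\PP^2)^{[k]}} c_{2k}(\mathcal{W})$.

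For step (ii), the Chern number just obtained is by construction the ``expected size of the Quot scheme'' of \cite{johnson_universal_2017}, and the conjecture there --- verified by machine computation for $1 \le k \le 11$ --- asserts that it equals $\chi\bigl((\PP^2)^{[k]}, \OO(\Theta_e)\bigr)$. For step (iii), recall from \cref{sec:hilb} that $\OO(\Theta_e) = \OO\bigl(d\,H_k - r\,\tfrac{B}{2}\bigr)$; for $d \gg 0$ with $r$ and $k$ fixed, this line bundle, and also its twist by $\omega_{(\PP^2)^{[k]}}^{-1}$ (since $\omega_{(\PP^2)^{[k]}}$ is a fixed line bundle), lies in the ample cone of $(\PP^2)^{[k]}$; Kodaira vanishing then yields $H^{>0}\bigl((\PP^2)^{[k]}, \OO(\Theta_e)\bigr) = 0$, so $\chi = h^0$ for the theta bundle. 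Chaining (i)--(iii) gives the claimed equality $\#\Quot(V,k) = h^0\bigl((\PP^2)^{[k]}, \OO(\Theta_e)\bigr)$.

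The main obstacle is step (i) --- the identification, due to \cite{goller_rank1_2019}, of the degeneracy locus of $\mathrm{ev}$ with the reduced Quot scheme. One must simultaneously rule out every excess component (no section of $V^\vee$ annihilated by $\mathrm{ev}_Z$ may have extra zeros, for any $Z$) and show that the degeneracy locus is reduced; the latter reduces to the same first-order computation that proves $\Quot(V,k)$ itself reduced in \cref{thm:B}, and it is exactly here that the genericity of $V$ and the deformation theory of the kernel--quotient pairs $(\hat E_i, F_i)$ of \eqref{eq:evfi} enter. The computer verification of the conjecture of \cite{johnson_universal_2017} for $k \le 11$ is a second, independent ingredient, which we take as established.
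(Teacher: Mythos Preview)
This theorem is quoted from \cite{goller_rank1_2019}; the paper does not give its own proof, only the two-sentence outline immediately preceding the statement. Your sketch is a faithful and correct expansion of precisely that outline: the Chern-number interpretation on $(\PP^2)^{[k]}$ from \cite{johnson_universal_2017}, Goller--Lin's identification of that Chern number with $\#\Quot(V,k)$, and the computational verification of the conjecture for $k\le 11$. Your step (iii), passing from $\chi$ to $h^0$ via vanishing for $d\gg 0$, is the one point not made explicit in the paper's discussion, but it is needed and your argument is the standard one.
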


In order to use this strategy to prove \cref{thm:inj} and show that our strange duality map is injective for the cases in this paper, there are two tasks remaining, which we address in the following two sections.

\subsection{The condition \texorpdfstring{$d \gg 0$}{d>>0}.} First, we must verify that the first Chern class $d$ of $V$ is sufficiently large to be able to apply  \cref{thm:B}. \cref{thm:goller-lin} needs this hypothesis only to apply \cref{thm:B}.

We first remark on a discrepancy in notation from \cite{bertram_potiers_2016}: the roles of $E$ and $\hat E$ have been reversed, and what we call $e$ here corresponds to $e^\vee$ there.

The proof of  \cref{thm:B} in \cite{bertram_potiers_2016} uses what is there called a \ref{dag}-resolution of $V$ in an essential way. We can show that a general $V$ with our invariants has a \ref{dag}-resolution only when $m \le 5$, hence the restriction on \cref{thm:inj}.
\begin{lem}
	Let $v = e^\vee + f$, where $e=(m+1, 2m+1, -2m-\frac12)$, $f=(1,0,-2)$, and $m \le 5$.
	Then the general semistable $V$ with $\ch(V) = v$ has a general \ref{dag}-resolution:
	\begin{equation}
	0 \rightarrow V \rightarrow \OO(1)^3 \oplus \OO^{2m+1} \rightarrow \OO(2)^{m+2} \rightarrow 0 \tag{$\dagger$} \label{dag}
	\end{equation}
\end{lem}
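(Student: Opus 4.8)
The plan is to obtain the $\dagger$-resolution of $V$ by dualizing a Coskun--Huizenga--Woolf resolution of $V^\vee$. A Hirzebruch--Riemann--Roch computation gives $v = e^\vee + f = (m+2,\,-(2m+1),\,-2m-\tfrac52)$, so $w := v^\vee = (m+2,\,2m+1,\,-2m-\tfrac52)$. The crucial point is that for $1 \le m \le 5$ the corresponding exceptional slope of $w$ is the \emph{integer} $\gamma = 0$: running the computation of $\mu_0$ from \cref{sec:beil} for the Chern character $w$, and noting that $r_0 = 1$ so the relevant radius is $x_0 = \tfrac32 - \sqrt{\tfrac94-1} = \tfrac{3-\sqrt5}{2} \approx 0.382$, one checks in the five cases $m = 1,\dots,5$ that $|\mu_0| < x_0$ (there $\mu_0$ ranges over roughly $(-0.38,\,-0.29)$). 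For $m = 6$ one instead finds $\mu_0 \approx -0.384 > x_0$, so the corresponding slope is no longer $0$; this is precisely the source of the hypothesis $m \le 5$.

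Since $0$ is an integer I will use the decomposition $n = (n-1).(n+1)$ in place of a standard decomposition, so $\alpha = -1$, $\beta = 1$, giving $E_{-\alpha-3} = \OO(-2)$, $E_{-\beta} = \OO(-1)$ and $E_{-\gamma} = E_0 = \OO$. Since $\chi(w\cdot\ch\OO) = \chi(w) = 2m+1 > 0$, the construction of \cite{coskun_effective_2014} (in its form for integer exceptional slopes) shows that the general $G \in M^{ss}(w)$ fits into an exact sequence
\[
0 \to \OO(-2)^{m_1} \to \OO(-1)^{m_2} \oplus \OO^{m_3} \to G \to 0 .
\]
Matching Chern characters on the two sides forces $m_1 = m+2$, $m_2 = 3$, $m_3 = 2m+1$ (consistently, the sequence itself gives $h^1(G) = h^2(G) = 0$, so that $m_3 = h^0(G) = \chi(w)$). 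Moreover the general such $G$ is a stable vector bundle (again by \cite{coskun_effective_2014}), so this sequence dualizes to
\[
0 \to G^\vee \to \OO(1)^3 \oplus \OO^{2m+1} \to \OO(2)^{m+2} \to 0
\]
with $\ch(G^\vee) = w^\vee = v$. As $G$ varies over a dense open subset of $M^{ss}(w)$ consisting of stable bundles, the dual $V := G^\vee$ varies over a dense open subset of $M^{ss}(v)$, so the general semistable $V$ has a $\dagger$-resolution.

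To upgrade this to a \emph{general} $\dagger$-resolution I would argue as in \cref{gen-gen}. Let $H := \Hom\bigl(\OO(1)^3 \oplus \OO^{2m+1},\, \OO(2)^{m+2}\bigr)$. The condition that $\phi \in H$ be surjective and that $\ker\phi$ be a stable vector bundle with $\ch(\ker\phi) = v$ is Zariski open, and it is nonempty by the previous paragraph; since $H$ is irreducible this locus $U$ is dense, and on $U$ the assignment $\phi \mapsto [\ker\phi]$ is a morphism $U \to M^{ss}(v)$ with dense image. Consequently, for any proper closed $Z \subset M^{ss}(v)$ the preimage is a proper closed subset of $U$, whose closure in $H$ is a proper closed subset. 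This is exactly the statement that the general semistable $V$ has a general $\dagger$-resolution.

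The one delicate point is invoking \cref{beil-res} for the integer exceptional slope $\gamma = 0$: the version recalled above is stated for dyadic (non-integer) slopes, so I would appeal to the analogous result of \cite{coskun_effective_2014} for integer exceptional slopes, or equivalently rerun their argument with the decomposition $0 = (-1).1$ and the exceptional collection $(\OO(-2),\OO(-1),\OO)$. The remaining ingredients --- the five-case computation of $\mu_0$, the Chern-character bookkeeping that pins down $m_1, m_2, m_3$, and the standard ``general map has locally free kernel and dominates the moduli space'' argument --- are routine.
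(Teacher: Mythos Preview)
Your approach is essentially the same as the paper's: dualize, apply the Coskun--Huizenga--Woolf resolution to $v^\vee$ case by case for $1\le m\le 5$, then dualize back and argue generality as in \cref{gen-gen}. Your stated numerical range for $\mu_0$ is slightly off (for $m=1$ one gets $\mu_0\approx -0.21$, not $-0.29$), but the conclusion $|\mu_0|<x_0$ for $m\le 5$ and failure at $m=6$ is correct; you also rightly flag the integer-slope subtlety $\gamma=0$, which the paper's statement of \cref{beil-res} does not literally cover and which the paper itself glosses over.
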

\begin{proof}
	A general $V$ is locally free, and $V^\vee$ is a general semistable sheaf of Chern character $v^\vee$ (see \cite{huybrechts_geometry_2010}, Lemma 9.2.1). Apply  \cref{beil-res} to $v^\vee$. There are only 5 cases, so checking the numerical conditions is routine and we see that $V^\vee$ has a ``dual \ref{dag}-resolution"
	\[
	0 \ra \OO(-2)^{m+2} \ra \OO(-1)^3 \oplus \OO^{2m+1} \ra  V^\vee \ra 0.
	\] 
	As in \cref{gen-gen}, we can also conclude that this resolution is general. Then its dual gives \eqref{dag} as desired.
\end{proof}

In Lemma 5.7 of \cite{bertram_potiers_2016}, the dimension count requires a numerical condition that is satisfied for $d \gg 0$. Plugging in our numbers, the condition is equivalent to $1(1+ (m+1) + 1) < 3 + (2m+1)$, so we are ok.

In Section 5.3 \cite{bertram_potiers_2016}, $e$ and $f$ are verified to be candidates for strange duality if $d \gg 0$. The condition is used in Lemma 5.8 of \cite{bertram_potiers_2016} which says that a general semistable $E$ and a general ideal sheaf $I_Z$ of two points satisfy $h^0(E \otimes I_Z) = 0$. Examining the proof, we see the conclusion holds when $d > j\operatorname{rank}(E)$, where $\chi(\OO_{\PP^2}(j)) \ge k$ (where $f=(1,0,-k)$). In our case, $k=2$, so we can take $j=1$, and the requirement is $2m+1 > m+1$. 

Section 5.4 of of \cite{bertram_potiers_2016} constructs a necessary $V$ with an exact sequence 
\[
 0 \ra \hat E \ra V \ra I_2 \ra 0
\]
so that $h^0(\hat E \otimes I_2) = 0$. Our \cref{subs-of-V-gen} shows that a general stable $V$ has general kernels and quotients, so the desired result follows from the previous paragraph. 

These are all the requirements on $d \gg 0$.

\subsection{Kernels are points of  \texorpdfstring{$\PP(\GG)$}{P(G)}}  \label{sec:res}

Secondly, we need to worry about whether the (duals of the) kernels appearing in the Quot scheme are actually in our moduli space $\PP(\GG)$.  This will be addressed by the following theorem.

\begin{thm} \label{kernel-in-moduli}
	Suppose $V$ has a general \ref{dag}-resolution. 
	If we have any short exact sequence
	\[
	0 \ra \hat E \ra V \ra I_2 \ra 0,
	\]  
	where $I_2$ is an ideal sheaf of 2 points, then $\hat E$ is locally free, and its dual $E:=\hat E^\vee$ fits into a short exact sequence \eqref{eq:Eext}, and the $F$ appearing there satisfies $0=\Ext^0(F,T(-1)) = \Ext^2(F,T(-1))$.
\end{thm}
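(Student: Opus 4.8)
The plan is to work with the general $\dagger$-resolution of $V$ and carefully chase the short exact sequence $0 \to \hat E \to V \to I_2 \to 0$. First I would establish local freeness of $\hat E$: since $V$ is locally free and $I_2$ is torsion-free (being an ideal sheaf), the kernel $\hat E$ is a second-syzygy sheaf on a smooth surface, hence reflexive, hence locally free on $\PP^2$. Dualizing the sequence gives $0 \to \OO \to E \to \hat E^\vee \to \EE xt^1(I_2, \OO) \to 0$, and since $\EE xt^1(I_2,\OO)$ is the structure sheaf $\OO_Z$ of the length-$2$ scheme (up to the standard identification), I would need to track how this torsion contribution interacts with the $\dagger$-resolution of $V$. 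The cleaner route is probably to dualize the whole diagram built from the $\dagger$-resolution of $V$ together with the quotient $V \twoheadrightarrow I_2$, producing a resolution of $E = \hat E^\vee$ of the shape $0 \to \OO(-2)^{m+2} \to \OO(-1)^3 \oplus \OO^{2m+1} \to E \to (\text{torsion}) \to 0$ or similar, and then simplify.

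The core combinatorial/homological step is to massage this resolution into the form $0 \to \OO(-2)^m \to T(-1) \oplus \OO^n \to E \to 0$ of \eqref{eq:Eres} (with $n = 2m-1$), which is exactly what is needed to produce the extension \eqref{eq:Eext}. The key identity to exploit is the Euler sequence $0 \to \OO(-1) \to \OO^3 \to T(-1) \to 0$ (equivalently $0 \to \OO \to \OO(1)^3 \to T \to 0$ twisted), which lets one trade three copies of $\OO(-1)$ plus a copy of $\OO$ (or the dual configuration) for a copy of $T(-1)$ up to the relevant cancellations. I would use this to rewrite the middle term $\OO(-1)^3 \oplus \OO^{2m+1}$: the three $\OO(-1)$'s together with three of the $\OO$'s assemble (via the dual Euler sequence and the fact that the map is general) into $T(-1)$ plus leftover trivial summands, reducing the count from $2m+1$ trivial summands to $2m+1-3+1 = 2m-1 = n$, and simultaneously reducing $\OO(-2)^{m+2}$ to $\OO(-2)^m$ after accounting for the two points and the Euler relation. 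Genericity of the $\dagger$-resolution (invoked via the analogue of \cref{gen-gen}) is what guarantees that the relevant sub-maps have maximal rank so that these cancellations are clean and no unexpected torsion or kernel appears; this is where I expect the main obstacle to lie, since one must verify that a general such diagram really does split off the Euler sequence in the required way rather than some degenerate configuration.

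Once $E$ sits in \eqref{eq:Eext} with cokernel $F$ the cokernel of an injective $\OO(-2)^m \to \OO^n$, the vanishing $\Ext^0(F,T(-1)) = \Ext^2(F,T(-1)) = 0$ follows by applying $\Hom(-, T(-1))$ to $0 \to \OO(-2)^m \to \OO^n \to F \to 0$. For $\Ext^0$: $\Hom(F, T(-1)) \hookrightarrow \Hom(\OO^n, T(-1)) = H^0(T(-1))^{\oplus n} = 0$ since $T(-1) = \Omega^\vee(-1)$ has no sections (immediate from the Euler sequence, as noted in the text). For $\Ext^2$: by Serre duality $\Ext^2(F,T(-1)) \cong \Hom(T(-1), F \otimes \omega)^\vee = \Hom(T(-1), F(-3))^\vee$, and one checks this vanishes because $F(-3)$ has no map from $T(-1)$ — equivalently, from the long exact sequence, $\Ext^2(F,T(-1))$ is a quotient of $\Ext^2(\OO^n, T(-1)) = H^2(T(-1))^{\oplus n}$, which vanishes (again Euler sequence plus Bott/Serre duality: $H^2(\PP^2, T(-1)) = H^0(\PP^2, \Omega(-2))^\vee = 0$). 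So the only real work is the resolution-rewriting; the $\Ext$ vanishings are a short cohomology computation with the Euler sequence.
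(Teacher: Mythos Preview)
Your argument for the $\Ext^0$ vanishing is wrong: $T(-1)$ \emph{does} have global sections. From the Euler sequence $0 \to \OO(-1) \to \OO^3 \to T(-1) \to 0$ one gets $H^0(T(-1)) \cong \CC^3$ (indeed the paper records $H^0(T(-1)) = X$). Hence $\Hom(\OO^n, T(-1)) \cong X^{\oplus n}$ is far from zero, and the map
\[
\Hom(\OO^n, T(-1)) \longrightarrow \Hom(\OO(-2)^m, T(-1))
\]
coming from $0 \to \OO(-2)^m \to \OO^n \to F \to 0$ is a map $X^{\oplus n} \to H^0(T(1))^{\oplus m}$ between spaces of dimensions $3n$ and $15m$ whose injectivity depends on the particular $F$. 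This is exactly why the paper needs \cref{ext-jump}: it shows the injectivity fails only on a high-codimension locus of the Grassmannian, and then \cref{subs-of-V-gen} is used to ensure that the $F$'s arising from a general $V$ avoid that locus. Your $\Ext^2$ computation is fine (and matches the paper), but the $\Ext^0$ part needs this genericity argument; it is not a formal consequence of the resolution.

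For the resolution step, your plan to dualize the $\dagger$-diagram and ``trade'' three $\OO(-1)$'s for a $T(-1)$ via the Euler sequence is morally the right manipulation, but the bookkeeping is more involved than you suggest. The paper works on the $\hat E$ side first: it extends $V \twoheadrightarrow I_2$ to a map of resolutions, studies the induced kernel $K \subset \OO(2)^{m+2}$, proves (using genericity of the $\dagger$-resolution) that $H^1(K(-2))=0$ so that $K$ has a clean resolution $0 \to \OO^2 \to \OO(1)^4 \oplus \OO(2)^m \to K \to 0$, and only then splices everything together and cancels via the Euler sequence to reach the $(\star^\vee)$-resolution $0 \to \hat E \to \Omega(1) \oplus \OO^{2m-1} \to \OO(2)^m \to 0$. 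The step you flag as ``where I expect the main obstacle to lie'' is genuine: showing that the relevant submaps have full rank is the content of the lemma on $H^1(K(-2))$, which requires an honest dimension count over the parameter space of $\dagger$-resolutions, not just an appeal to genericity.
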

Combined with \cref{thm:B} and \ref{thm:goller-lin} and the discussion of the finite Quot scheme method above, this proves \cref{thm:inj}. We give the proof of \cref{kernel-in-moduli} at the end of this section after proving the necessary lemmas.

\begin{lem}
	Suppose $V$ has a \ref{dag}-resolution. Then any surjective map $V \rightarrow I_2$, where $I_2$ is an ideal sheaf of two distinct points, extends to a commutative diagram, where all the vertical arrows are surjective.
	\begin{equation}
	\xymatrix{
		0 \ar[r] & V \ar[r] \ar[d] & \OO(1)^3 \oplus \OO^{2m+1} \ar[r]^-\psi \ar@{-->}[d] & \OO(2)^{m+2} \ar[r] \ar@{-->}[d] & 0 \\ 0 \ar[r] & I_2 \ar[r]& \OO \ar[r] & \OO_2 \ar[r] & 0
	} \label{eq:quot}
	\end{equation}
\end{lem}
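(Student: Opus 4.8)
The plan is to lift the surjection $V \to I_2$ along the $(\dagger)$-resolution in two stages, using the vanishing of suitable $\Ext$ groups coming from the exceptional bundles $\OO$, $\OO(1)$, $\OO(2)$. First I would note that we have two short exact sequences in play: the $(\dagger)$-resolution $0 \to V \to \OO(1)^3 \oplus \OO^{2m+1} \xrightarrow{\psi} \OO(2)^{m+2} \to 0$, and the standard sequence $0 \to I_2 \to \OO \to \OO_2 \to 0$ for the length-$2$ subscheme. I want to construct the middle vertical arrow $\OO(1)^3 \oplus \OO^{2m+1} \dashrightarrow \OO$ making the left square commute, and then the right vertical arrow $\OO(2)^{m+2} \dashrightarrow \OO_2$ is forced by the universal property of cokernels; one then checks surjectivity of all three verticals by the snake lemma / a diagram chase.

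The key step is producing the middle vertical map. The composite $V \to I_2 \hookrightarrow \OO$ is a map out of $V$, and I want to extend it over the inclusion $V \hookrightarrow \OO(1)^3 \oplus \OO^{2m+1}$, i.e. I want the connecting map $\Hom(\OO(1)^3 \oplus \OO^{2m+1}, \OO) \to \Hom(V,\OO)$ to be surjective. Applying $\Hom(-,\OO)$ to the $(\dagger)$-resolution, the obstruction to this surjectivity lies in $\Ext^1(\OO(2)^{m+2}, \OO) = H^1(\PP^2,\OO(-2))^{\oplus(m+2)}$, which vanishes. So the middle arrow $\beta\colon \OO(1)^3\oplus\OO^{2m+1} \to \OO$ exists with $\beta|_V$ equal to the given $V\to\OO$. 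Since $V \to I_2 \to \OO$ factors through $I_2$, and $I_2 \to \OO$ has image the ideal of the two points, I then need that $\beta$ itself is surjective onto $\OO$: this should follow because $\beta$ restricted to $V$ already surjects onto $I_2$, and adding the rest of $\OO(1)^3\oplus\OO^{2m+1}$ can only enlarge the image; more carefully, one argues $\beta$ is nonzero at every point (the image sheaf is an ideal sheaf containing $I_2$, hence is $I_2$ or $\OO$; genericity of the $(\dagger)$-resolution, or a direct local computation at the two points of $Z$, rules out degeneration — this is where the ``general $(\dagger)$-resolution'' hypothesis is used). Given $\beta$ surjective, the induced map $\OO(2)^{m+2} = \operatorname{coker}(V \to \OO(1)^3\oplus\OO^{2m+1}) \to \operatorname{coker}(I_2 \to \OO) = \OO_2$ is automatically surjective, and then the left vertical $V \to I_2$ is surjective by hypothesis, so all three verticals are surjective as claimed.

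The main obstacle I anticipate is the surjectivity of $\beta$ (equivalently, ruling out that the lift of $V\to I_2\hookrightarrow\OO$ extends only to a map whose image is strictly smaller than $\OO$, or that the right-hand induced map fails surjectivity at a point of $Z$). The vanishing $\Ext^1(\OO(2),\OO)=0$ is immediate and gives existence of the lift for free; the content is the ``general resolution'' input, presumably ensuring that the composite $V \hookrightarrow \OO(1)^3\oplus\OO^{2m+1} \xrightarrow{\text{(lift)}} \OO$ meets the two points of $Z$ transversally enough that the cokernel on the bottom row is exactly $\OO_2$ rather than a longer scheme or a thickening. I would handle this with a local analysis at each point of $Z$, using that $I_2$ locally looks like the ideal $(x,y)$ (for distinct reduced points) and that a general $\beta$ restricts on $V$ to a surjection onto that ideal; surjectivity of $\beta$ onto $\OO$ then amounts to $\beta$ being a unit somewhere near each point, which holds automatically since even $V \to I_2 \to \OO$ has full-dimensional image locally. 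Everything else — the $\Ext$ vanishings, the snake lemma chase, and reading off that all verticals are surjective — is routine homological algebra on $\PP^2$.
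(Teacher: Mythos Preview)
Your construction of the middle vertical arrow via the vanishing $\Ext^1(\OO(2)^{m+2},\OO)=0$ is fine and matches the paper. The gap is in your surjectivity argument for $\beta\colon \OO(1)^3\oplus\OO^{2m+1}\to\OO$. You appeal to a ``general $(\dagger)$-resolution'' hypothesis, but the lemma does not assume genericity --- only that a $(\dagger)$-resolution exists. Your local argument also does not work: at a point $p$ of the length-$2$ scheme $Z$, the composite $V\to I_2\hookrightarrow\OO$ lands in the maximal ideal, so it is \emph{not} a unit there, and hence does not by itself force $\beta$ to be surjective at $p$. (Likewise, the image of $\beta$ is an ideal containing $I_2$, but such ideals are $I_2$, $I_{p_1}$, $I_{p_2}$, or $\OO$ --- not just the two endpoints you list.)

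The paper closes this with a one-line observation you missed: $\Hom(\OO(1),\OO)=H^0(\OO(-1))=0$, so $\beta$ vanishes on the $\OO(1)^3$ summand and is entirely determined by a map $\OO^{2m+1}\to\OO$, i.e.\ by a row vector in $\CC^{2m+1}$. Since $\beta|_V$ is nonzero, this vector is nonzero, and any nonzero constant map $\OO^{2m+1}\to\OO$ is surjective. Surjectivity of the right vertical arrow then follows from the snake lemma. No genericity and no local analysis are needed.
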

\begin{proof}
	From the long exact sequence obtained by applying $\Hom(-,\OO)$ to the resolution of $V$, we get $\Hom(\OO(1)^3 \oplus \OO^{2m+1}, \OO) \cong \Hom(V, \OO)$. Hence the composition $V \rightarrow I_2 \rightarrow \OO$ lifts to a map $\OO(1)^3 \oplus \OO^{2m+1} \rightarrow \OO$. The map $\OO(2)^{m+2} \rightarrow \OO_2$ is induced by the first two.
	
	There are no non-zero maps $\OO(1)^3 \ra \OO$, so it follows that $\OO^{2m+1} \rightarrow \OO$ must be non-zero, which implies that it is surjective. Then the rightmost vertical map is then surjective by the snake lemma. 
\end{proof}

Let 
\begin{equation}
	0 \ra E \ra \OO(1)^3 \oplus \OO^{2m} \ra K \ra 0 \label{eq:seq-ker}
\end{equation}
 be the sequence of kernels in the diagram \eqref{eq:quot}.

\begin{lem}
	Assume $H^1(K(-2))=0$. Then $K$ has a resolution
	\begin{equation}
	0 \ra \OO^2 \rightarrow \OO(1)^4 \oplus \OO(2)^m \rightarrow K \rightarrow 0 \label{eq:Kres}
	\end{equation}
\end{lem}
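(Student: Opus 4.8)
The plan is to read off from \eqref{eq:quot} the short exact sequence defining $K$, twist it down so that the hypothesis $H^1(K(-2))=0$ becomes a statement about a single linear map, use that to put the surjection onto the length-two sheaf into a normal form, and finally resolve by a direct sum of Koszul complexes. Throughout I use that in \eqref{eq:quot} the sheaf $I_2$ is the ideal sheaf of two distinct points $p_1\neq p_2$, so that $\OO_2=\OO_{p_1}\oplus\OO_{p_2}$.

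First I would observe that $K$ is, by construction, the kernel of the (surjective) third vertical arrow of \eqref{eq:quot}, so there is a short exact sequence $0\to K\to\OO(2)^{m+2}\to\OO_2\to 0$. Twisting by $\OO(-2)$ and using $\OO_2(-2)\cong\OO_2$ gives $0\to K(-2)\to\OO^{m+2}\xrightarrow{\phi}\OO_2\to 0$. Since $H^{>0}(\OO^{m+2})=0$, the long exact sequence in cohomology identifies $H^1(K(-2))$ with $\operatorname{coker}\bigl(H^0(\phi)\colon\CC^{m+2}\to H^0(\OO_2)\bigr)$, where $H^0(\OO_2)=\CC^2$; thus the hypothesis is exactly that $H^0(\phi)$ is surjective.

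Next I would decompose $\phi=(\phi_1,\phi_2)$ along $\OO_2=\OO_{p_1}\oplus\OO_{p_2}$. Each $\phi_i\colon\OO^{m+2}\to\OO_{p_i}$ is surjective (being a composite of $\phi$ with a projection), hence corresponds via $\Hom(\OO^{m+2},\OO_{p_i})\cong\CC^{m+2}$ to a nonzero vector $c^i$, and $H^0(\phi)$ is the linear map with rows $c^1,c^2$. So surjectivity of $H^0(\phi)$ is the linear independence of $c^1$ and $c^2$. Choosing a basis of $\CC^{m+2}$ in which $c^1=e_1$ and $c^2=e_2$ — that is, applying a suitable automorphism of $\OO^{m+2}$ — the map becomes $(s_1,\dots,s_{m+2})\mapsto(s_1(p_1),s_2(p_2))$, whence $K(-2)\cong I_{p_1}\oplus I_{p_2}\oplus\OO^m$, with $I_{p_j}$ the ideal sheaf of $p_j$.

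Finally I would resolve: realizing each $p_j$ as the transverse intersection of two distinct lines gives the Koszul resolution $0\to\OO(-2)\to\OO(-1)^2\to I_{p_j}\to 0$; the direct sum of these two resolutions with the trivial resolution of $\OO^m$, twisted back up by $\OO(2)$, is $0\to\OO^2\to\OO(1)^4\oplus\OO(2)^m\to K\to 0$, which is \eqref{eq:Kres}. The only real content is the third paragraph: the point of the hypothesis $H^1(K(-2))=0$ is that the two linear functionals through which $\phi$ factors over the two points are independent, equivalently that up to change of basis the surjection $\OO(2)^{m+2}\to\OO_2$ uses a separate summand for each point; without it one instead finds $K\cong I_{\{p_1,p_2\}}(2)\oplus\OO(2)^{m+1}$, whose minimal free resolution has the wrong shape. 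Everything else is standard cohomology and Koszul bookkeeping.
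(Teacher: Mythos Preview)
Your proof is correct and takes a genuinely different route from the paper's. The paper invokes a general Beilinson-type criterion (Proposition~5.5 of \cite{bertram_potiers_2016}) applied to $K(-2)$: it verifies the vanishings $H^0(K(-3))=H^2(K(-3))=0$ from the defining sequence, uses the hypothesis $H^1(K(-2))=0$, and checks the injectivity of the multiplication map $\Hom(\OO(-1),\OO)\otimes H^0(K(-2))\to H^0(K(-1))$ via the Euler sequence and the vanishing $H^0(\Omega(1))=0$; the exponents are then read off the Chern character. You instead identify $K(-2)$ explicitly: the hypothesis says the two evaluation functionals $c^1,c^2$ are independent, so after an automorphism of $\OO^{m+2}$ the surjection splits coordinatewise and $K(-2)\cong I_{p_1}\oplus I_{p_2}\oplus\OO^m$, which you resolve by summing two Koszul complexes. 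Your argument is more elementary and more transparent about what the hypothesis actually buys (it separates the two points into distinct summands); the paper's argument has the advantage that it does not rely on the points being reduced or distinct, only on the cohomological input, and it fits into the systematic framework used elsewhere in \cite{bertram_potiers_2016}. Both are valid; yours does use the reducedness of the length-two scheme, which is available here from the lemma preceding \eqref{eq:quot}.
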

\begin{proof}
	This will follow from applying Proposition 5.5 in \cite{bertram_potiers_2016} to $K(-2)$. To use Proposition 5.5, we need to check that $0=H^0(K(-3)) = H^1(K(-2)) = H^2(K(-3))$ and that the natural map $\Hom(\OO(-1), \OO) \otimes H^0(K(-2)) \ra H^0(K(-1))$ is injective. Twisting the defining sequence
	\[
	0 \ra K \ra \OO(2)^{m+2} \ra \OO_2 \ra 0,
	\]
	one easily computes that $0=H^0(K(-3)) = H^2(K(-3))$.   
	
	Tensor the Euler sequence $0 \ra \Omega \ra \Hom(\OO(-1),\OO) \otimes \OO(-1) \ra \OO \ra 0$ with $K(-1)$ and take sections. The desired injectivity is now equivalent to $H^0(\Omega \otimes K(-1))=0$. Since $\Omega \otimes K(-1)$ is a subsheaf of $\Omega(1)^{m+2}$, we are done.
 
 	The exponents $2$, $4$, and $m$ are determined by the Chern character of $K$. 
\end{proof}

\begin{lem}
	If $V$ has a general \ref{dag}-resolution, then it does not admit a diagram \eqref{eq:quot} with $H^1(K(-2)) >0$.
\end{lem}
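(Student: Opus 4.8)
The plan is to show that the condition $H^1(K(-2)) > 0$ forces $V$ into a positive-codimension subset of the parameter space of $\dagger$-resolutions, contradicting the hypothesis that $V$ has a \emph{general} $\dagger$-resolution. Concretely, I would stratify the family of diagrams \eqref{eq:quot} by the locus where $H^1(K(-2))$ jumps, and bound its dimension against the dimension of the space of all $\dagger$-resolutions together with the choice of the surjection $V \ra I_2$ and the two points.

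First I would unwind what $H^1(K(-2)) > 0$ means. Twisting the defining sequence $0 \ra K \ra \OO(2)^{m+2} \ra \OO_2 \ra 0$ by $\OO(-2)$ gives $0 \ra K(-2) \ra \OO^{m+2} \ra \OO_2 \ra 0$, so taking cohomology and using $H^1(\OO^{m+2})=0$ we get $H^1(K(-2)) = \operatorname{coker}(H^0(\OO^{m+2}) \ra H^0(\OO_2))$. Thus $H^1(K(-2)) > 0$ exactly when the evaluation map $\CC^{m+2} = H^0(\OO^{m+2}) \ra H^0(\OO_2) = \CC^2$ at the two chosen points fails to be surjective — i.e. the $(m+2)$-tuple of functions (equivalently, the map $\OO(2)^{m+2} \ra \OO_2$, which is the rightmost vertical map of \eqref{eq:quot}, pulled back from the surjection $\psi$) does not separate the two points $Z$. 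But this rightmost map is determined by $\psi$ and the chosen surjection $\OO^{2m+1} \ra \OO$ lifting $V \ra I_2$; tracing through the previous lemma, $H^1(K(-2))>0$ translates into a closed condition on the pair (choice of $\psi$ up to the relevant automorphisms, choice of lift). I would make this precise: the relevant data is the composite $\OO(2)^{m+2} \xrightarrow{} \OO \ra \OO_Z$, and its failure to be surjective is a determinantal (hence closed, positive-codimension) condition on that data as $Z$ and the lift vary.

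Next I would set up the incidence variety. Let $\mathcal{D}$ be the (irreducible) parameter space of general $\dagger$-resolutions, which by the lemma preceding dominates $M^{ss}(v)$, and over it form the space $\mathcal{I}$ of tuples $(V \to I_2, Z)$ with $Z$ a length-$2$ subscheme and the surjection lifted to a diagram \eqref{eq:quot}. The projection $\mathcal{I} \ra \mathcal{D}$ has fibers of a fixed dimension (the $\Quot$-type fiber of surjections $V \ra I_2$). On $\mathcal{I}$ the condition $H^1(K(-2))>0$ cuts out a closed subset $\mathcal{I}'$; by the determinantal description above, each fiber of $\mathcal{I}' \ra \mathcal{D}$ over a general point — or rather, I want the image of $\mathcal{I}'$ in $\mathcal{D}$ — must be proper. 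The cleanest route is: show that for a \emph{general} $V$ in $\mathcal{D}$, and any diagram \eqref{eq:quot}, the rightmost map $\OO(2)^{m+2} \ra \OO_2$ is surjective, by exhibiting a single $V$ (e.g. a sufficiently generic one built directly from a generic $\dagger$-resolution) and a dimension count showing the bad locus in the fiber over it is empty, or by a semicontinuity argument reducing to one explicit example. Since $\mathcal{D}$ is irreducible, a nonempty open condition verified at one point holds generically.

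The main obstacle I expect is the bookkeeping in the second step: carefully identifying exactly which piece of data the map $\OO(2)^{m+2}\ra\OO_2$ depends on, and getting a clean enough handle on it to run a genericity/dimension argument rather than an explicit construction. In particular one must be careful that varying $Z$ is allowed (the two points are part of the data of the Quot element, not fixed), so the statement is really that \emph{no} $Z$ and no compatible lift can produce $H^1(K(-2))>0$ when $V$ is general — this is where one wants to invoke that a general $V$ has a general $\dagger$-resolution, so that the composite $\OO(2)^{m+2}\to\OO$ is ``as surjective as possible'' and hence its restriction to any two distinct points is surjective. A subsidiary point to handle with care is the case where $Z$ is non-reduced, though one may be able to reduce to reduced $Z$ since $\Quot(V,(1,0,-2))$ generically consists of ideal sheaves of reduced subschemes by \cref{thm:B}. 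Once these are pinned down, the conclusion follows: the bad locus maps to a proper closed subset of $\mathcal{D}$, so a $V$ with a general $\dagger$-resolution avoids it.
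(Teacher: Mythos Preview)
Your overall strategy matches the paper's: translate $H^1(K(-2))>0$ into failure of surjectivity of the evaluation $\CC^{m+2}\to H^0(\OO_Z)$, set up an incidence variety over the space of $\dagger$-resolution data, and argue the bad locus projects to positive codimension. Your cohomological translation is correct.

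The genuine gap is exactly where you flag the ``main obstacle'': you have not found a way to make the dimension count tractable, and the missing idea is not just bookkeeping. The difficulty is that the induced map $\OO(2)^{m+2}\to\OO_2$ is not freely chosen---it is constrained by commutativity of \eqref{eq:quot}---so one must parametrize \emph{which} such maps can arise from \emph{some} surjection $V\to I_2$ and then impose non-surjectivity on sections. The paper does this by writing $\psi$ as a pair of matrices $A$ (linear forms) and $B$ (quadratic forms), and the fiberwise map $\OO(2)^{m+2}\to\OO_2$ as row vectors $c_1,c_2\in\CC^{m+2}$. Commutativity forces $c_iA(v_i)=0$ and $c_1B(v_1)=c_2B(v_2)$; non-surjectivity on sections forces $c_1,c_2$ proportional. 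After absorbing the scalar into the lift $v_2$, the existence of such $c_i$ becomes exactly rank-deficiency of the single block matrix $[A(v_1)\mid A(v_2)\mid B(v_1)-B(v_2)]$. This determinantal locus has computable codimension, the evaluation $(A,B,v_1,v_2)\mapsto\text{(this matrix)}$ has equidimensional fibers, and after projecting away the $6$-dimensional $(v_1,v_2)$ one still has codimension $\ge m+1$ in the space of $(A,B)$. Without this reformulation your incidence variety has no handle for a count. Your alternative of exhibiting one explicit $V$ does not avoid the work, since checking that \emph{no} $Z$ yields a bad $K$ for a fixed $V$ is a statement over a $4$-parameter family and needs the same analysis; and invoking \cref{thm:B} to restrict to reduced $Z$ is potentially circular, as the present lemma is part of verifying the hypotheses under which \cref{thm:B} applies in this numerical range.
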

\begin{proof}
	The condition $H^1(K(-2))>0$ is equivalent to the condition that  the map  $f:\OO^{m+2} \ra \OO_2$ obtained by twisting the rightmost vertical map in  \eqref{eq:quot}  is not surjective on sections.
	
	Let $A$ be a $3 \times (m+2)$ matrix of linear forms and $B$ be a $(2m+1) \times (m+2)$ matrix of quadratic forms representing the map $\psi$ in \eqref{eq:quot}. Let $p_1$, $p_2$ be distinct points in $\PP(X)$. We can identify the fibers $(\OO(1)^3 \oplus \OO^{2m+1})_{p_i} \cong \CC^3 \oplus \CC^{2m+1}$ and $(\OO(2)^{m+2})_{p_i} \cong \CC^{m+2}$ by picking lifts $v_i \in X$ of $p_i$. Then the map $\psi$ restricted to the fiber over $p_i$  is given by $A(v_i)$ and $B(v_i)$. We can represent the map  $f$ by two row vectors $c_1$, $c_2 \in \CC^{m+2}$ (which give the maps $ \CC^{m+2} \cong (\OO(2)^{m+2})_{p_i} \ra \OO_{p_i} \cong \CC$). Commutativity of the diagram \eqref{eq:quot} imposes the requirements that $c_iA(v_i)  = 0$ and $c_1B(v_1) = c_2B(v_2)$.
	
	If furthermore, we would like $f$ to fail to be surjective on sections, we must have additionally $c_1 = dc_2$ for some $d \in \CC$. Given $p_1$ and $p_2$, such $c_i$ exist if and only if there are lifts $v_i$ so that the matrix
	\begin{equation}
	\left[A(v_1)\;|\; A(v_2)\;|\;B(v_1) - B(v_2) \right] \label{eq:ABmat}
	\end{equation}
	is not full rank. Indeed, if it is not full rank, let $c_1=c_2$ be a left solution to the homogeneous equation. On the other hand, if we have $c_iA(v_i)=0$  and $c_1B(v_1) = dc_1B(v_2)$, then let $v_1'=v_1$ and $v_2' = v_2/\sqrt{d}$, so $c_1B(v_1')=c_1B(v_2')$. Then $c_1$ is a left solution.
	
	Next, we claim that the set of bad pairs $A$, $B$, that is, those such that there exists $v_1$, $v_2$ so that \eqref{eq:ABmat} is not full rank, has positive codimension in the space of all $A$ and $B$.
	
	Consider the maps
	\[
	\xymatrix{
	M(X^\vee, 3,m+2) \times M(S^2X^\vee, 2m+2,m+2) \times (X \times X \setminus \Delta) \ar[r]^-{e} \ar[d]_{p}  &M(\CC, 2m+8, m+2) \\ 	M(X^\vee, 3,m+2) \times M(S^2X^\vee, 2m+2,m+2)
}
	\]
	Here, $M(R,a,b)$ means the space of $a \times b$ matrices with entries in $R$, $\Delta$ is the diagonal in $X \times X$, and the map $e$ takes $(A, B,v_1,v_2)$ to \eqref{eq:ABmat}. Let $Z \subset M(\CC, 2m+8, m+2) $ be  the space of rank deficient matrices, which has codimension $m+7$. Then the space of bad $A$ and $B$ is precisely $p(e^{-1}(Z))$. Now, we check that the fibers of $e$ are all of the same dimension. Indeed, the fiber over a point $[\bar A_1\;|\;\bar A_2\;|\;D]$ consists of $(A,B,v_1,v_2)$ such that $v_1 \neq v_2$, the entry $A_{jk}$ is a linear form satisfying $A_{jk}(v_i) = (\bar A_i)_{jk}$, and $B_{jk}$ is a quadratic form satisfying $B_{jk}(v_1) = (\bar B_1)_{jk}$ and $B_{jk}(v_2) = (\bar B_1-D)_{jk}$ for some $\bar B_1 \in M(\CC, 2m+2,m+2)$. Since $v_1, v_2, \bar B_1$ have no restrictions (except the closed condition $v_1 \neq v_2$), and fixing the values of a linear or quadratic form at two distinct points always imposes independent conditions, we see that the fibers are all of the same dimension as claimed. 

	It follows then that the codimension of $e^{-1}(Z)$ is also $m+7$. Since $p$ has relative dimension $6$, the image $p(e^{-1}(Z))$ has codimension at least $m+1$.
\end{proof}

\begin{lem} \label{ker-res}
	If $V$ has a general \ref{dag}-resolution, then for any ideal sheaf of 2 points $I_2$ with a sequence
	\[
	0 \ra \hat E \ra V \ra I_2 \ra 0,
	\] $\hat E$  has a \ref{res-d}-resolution:
	\begin{equation}
	0 \rightarrow \hat E \rightarrow \Omega(1) \oplus \OO^{2m-1} \rightarrow \OO(2)^m \rightarrow 0. \tag{$\star^\vee$} \label{res-d}
	\end{equation}
\end{lem}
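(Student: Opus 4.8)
The plan is to splice the kernel sequence \eqref{eq:seq-ker} together with the resolution \eqref{eq:Kres} of $K$ and then minimize the resulting presentation of $\hat E$. The preceding lemmas supply everything for the setup: since $V$ has a general \ref{dag}-resolution, any surjection $V \to I_2$ fits into \eqref{eq:quot} with surjective vertical maps, the snake lemma produces \eqref{eq:seq-ker}, i.e. $0 \to \hat E \to \OO(1)^3 \oplus \OO^{2m} \to K \to 0$, and $H^1(K(-2)) = 0$, so $K$ admits the resolution \eqref{eq:Kres}, $0 \to \OO^2 \to \OO(1)^4 \oplus \OO(2)^m \to K \to 0$. First I would take the fibre product $P := (\OO(1)^3 \oplus \OO^{2m}) \times_K (\OO(1)^4 \oplus \OO(2)^m)$ of the two surjections onto $K$. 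It fits into $0 \to \OO^2 \to P \to \OO(1)^3 \oplus \OO^{2m} \to 0$, which splits since $\Ext^1(\OO(1)^3 \oplus \OO^{2m},\OO^2)=0$, so $P \cong \OO(1)^3 \oplus \OO^{2m+2}$; and it fits into $0 \to \hat E \to P \to \OO(1)^4 \oplus \OO(2)^m \to 0$. Combining these gives a presentation
\[
0 \to \hat E \to \OO(1)^3 \oplus \OO^{2m+2} \xrightarrow{\ \Phi\ } \OO(1)^4 \oplus \OO(2)^m \to 0,
\]
and in particular $\hat E$ is locally free, being the kernel of a surjection of locally free sheaves on a smooth surface.

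The crucial point is to cancel the three superfluous copies of $\OO(1)$. Let $C \colon \OO(1)^3 \to \OO(1)^4$ be the block of scalar entries of $\Phi$; I claim $\operatorname{rank} C = 3$. Dualizing the presentation (legitimate since $\hat E$ is locally free) gives $0 \to \OO(-1)^4 \oplus \OO(-2)^m \to \OO(-1)^3 \oplus \OO^{2m+2} \to E \to 0$, where $E := \hat E^\vee$ and the scalar block of the differential is $C^\vee$. Elementary row and column operations (with scalar and linear-form coefficients) split off $r := \operatorname{rank} C$ copies of the trivial subcomplex $\OO(-1) \xrightarrow{\sim} \OO(-1)$ and leave a presentation $0 \to \OO(-1)^{4-r} \oplus \OO(-2)^m \to \OO(-1)^{3-r} \oplus \OO^{2m+2} \to E \to 0$ with no scalar entries in the differential, hence the minimal free resolution of $E$. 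As graded Betti numbers are invariants of $E$, the number $3-r$ of degree-$(-1)$ generators must equal $\dim H^0(E(-1)) = \dim \Hom(\OO(1),\hat E)$. But $\hat E \subseteq V$ and $V$ is semistable with $c_1(V)<0$, so $\Hom(\OO(1),\hat E)\subseteq \Hom(\OO(1),V)=0$, forcing $r = 3$.

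Knowing $\operatorname{rank} C = 3$, I would change bases in the source and target so that $C$ is the inclusion of the first three summands $\OO(1)^3 \hookrightarrow \OO(1)^4$, and use these units to clear the remaining entries in the corresponding rows and columns; this splits off an isomorphism $\OO(1)^3 \xrightarrow{\sim} \OO(1)^3$ and yields $0 \to \hat E \to \OO^{2m+2} \xrightarrow{(L,Q)} \OO(1)\oplus\OO(2)^m \to 0$ with $L$ linear and $Q$ quadratic. Surjectivity forces $L\colon\OO^{2m+2}\to\OO(1)$ to be nowhere zero, hence surjective, so its $2m+2$ linear forms span $X^\vee$; after one more change of basis in $\OO^{2m+2}$ the Euler sequence identifies $\ker L$ with $\Omega(1)\oplus\OO^{2m-1}$. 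Since $\hat E = \ker(L,Q)\subseteq \ker L$, we get $\hat E = \ker h$ with $h$ the restriction of $Q$ to $\Omega(1)\oplus\OO^{2m-1}$, and a snake-lemma chase comparing $0\to\Omega(1)\oplus\OO^{2m-1}\to\OO^{2m+2}\xrightarrow{L}\OO(1)\to 0$ with $0\to\OO(2)^m\to\OO(1)\oplus\OO(2)^m\to\OO(1)\to 0$ (the induced map on cokernels being the identity) shows $h$ is surjective. This is exactly the \ref{res-d}-resolution. The main obstacle is the middle step: the cancellation bookkeeping, and especially identifying $\operatorname{rank} C$ with a graded Betti number of $E$ so that the vanishing $\Hom(\OO(1),\hat E)=0$ pins it down; the remaining manipulations are routine homological algebra.
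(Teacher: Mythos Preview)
Your strategy is the same as the paper's: build the intermediate presentation
\[
0 \to \hat E \to \OO(1)^3 \oplus \OO^{2m+2} \xrightarrow{\Phi} \OO(1)^4 \oplus \OO(2)^m \to 0
\]
by splicing the kernel sequence with the resolution of $K$ (your fibre-product phrasing is exactly the paper's lifting via $\Ext^1(\OO(1)^3\oplus\OO^{2m},\OO^2)=0$), then strip off the $\OO(1)$'s and use the Euler sequence. Your endgame, once $\operatorname{rank} C=3$ is granted, is clean and correct; it is a tidy rewriting of the paper's second diagram.

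The genuine gap is in your argument that $\operatorname{rank} C = 3$. Two things go wrong. First, the bookkeeping: the summand $\OO(-1)$ in the dual resolution is $\widetilde{S(-1)}$, hence a degree-\emph{one} generator, and in any case $H^0(E(-1))=H^0(\hat E^\vee(-1))=\Hom(\hat E,\OO(-1))$, not $\Hom(\OO(1),\hat E)$. Second, and more seriously, even the corrected vanishing does not pin down $r$. Apply $\Hom(\OO(1),-)$ to the minimized presentation
\[
0 \to \hat E \to \OO(1)^{3-r}\oplus\OO^{2m+2} \to \OO(1)^{4-r}\oplus\OO(2)^m \to 0
\]
(scalar block zero). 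Since $\Hom(\OO(1),\OO)=0$, this gives
\[
\Hom(\OO(1),\hat E)\;=\;\ker\bigl(\CC^{3-r}\longrightarrow \CC^{4-r}\oplus (X^\vee)^m\bigr)\;=\;\ker\bigl(\CC^{3-r}\longrightarrow (X^\vee)^m\bigr),
\]
the last map recording the linear block $\OO(1)^{3-r}\to\OO(2)^m$. Thus $\Hom(\OO(1),\hat E)=0$ (which does follow from $\hat E\subset V$) only says that this linear block is injective on $\CC^{3-r}$; it does \emph{not} force $3-r=0$. Equivalently, after cancelling the scalar units, a leftover $\OO(1)$ in the source can map nontrivially into $\OO(2)^m$ by linear forms, so it need not contribute a copy of $\OO(1)$ inside $\hat E$. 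The paper's own diagram chase is terse on exactly this point (its bottom isomorphism $\OO(1)^3\oplus\OO(1)\xrightarrow{\sim}\OO(1)^4$ tacitly uses that $b'$ has rank $3$), so the issue is real; but your proposed justification does not close it.
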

\begin{proof}
	We first claim that $\hat E$ has a resolution
	\[
	0 \ra \hat  E \ra \OO(1)^3 \oplus \OO^{2m+2} \rightarrow \OO(1)^4 \oplus \OO(2)^m \rightarrow 0.
	\]
	Apply $\Hom( \OO(1)^3 \oplus \OO^{2m}, -)$ to the resolution \eqref{eq:Kres} for $K$. Since $\Ext^1(\OO(1)^3 \oplus \OO^{2m},\OO^2) = 0$, we see that the map $\OO(1)^3 \oplus \OO^{2m} \ra K$ from \eqref{eq:seq-ker} factors  through the last map in of the $K$ resolution \eqref{eq:Kres}. That is, we get a diagram:
	\[
	\xymatrix{
	0 \ar[r] & \hat E \ar[r] \ar[d]^{a} & \OO(1)^3 \oplus \OO^{2m} \ar[r] \ar[d]^{b} & K \ar[r] \ar[d] & 0 \\
	0 \ar[r] & \OO^2 \ar[r] & \OO(1)^4 \oplus \OO(2)^m  \ar[r] &           K \ar[r]          & 0
	}
	\]
	The maps for the claimed resolution come from this diagram.
	
	Now, consider the map $b': \OO(1)^3 \ra \OO(1)^4$ coming from the map $b$ above, and write $\OO(1)^f$ for its image and $\OO(1)^{4-f}$ for its cokernel.
	
	Now consider the diagram, with the middle two columns exact.
	\[
	\xymatrix{ 
		&0 \ar[r]&\ar[r] \OO(1)^3 \ar[r]^{b' \oplus 0} \ar[d] & \OO(1)^f \oplus \OO(2)^m  \ar[d]^{c}&  \\
		0 \ar[r] & \hat E \ar[r] & \OO(1)^3 \oplus  \OO^{2m+2} \ar[r] \ar[d] & \OO(1)^4 \oplus \OO(2)^m \ar[r] \ar[d]&0 \\ && \OO^{2m+2} \ar@{-->}[r]^d & \OO(1)^{4-f}
	}
	\]
	 By the snake lemma, it follows that the bottom map $d$ is surjective. Pick a subsheaf $\OO(1) \subset \OO(1)^{4-f}$, and a a subsheaf $\OO^3 \subset \OO^{2m+2}$ so that the restriction of $d$ is surjective onto the chosen $\OO(1)$.
	
	Now consider the diagram:
	\[
	\xymatrix{ 
		0 \ar@{-->}[r] & \hat E \ar@{-->}[r] \ar@{-->}[d]^-{\sim}& \Omega(1) \oplus \OO^{2m-1} \ar@{-->}[r] \ar@{-->}[d] & \OO(2)^m \ar@{-->}[r] \ar@{-->}[d] & 0 \\
		0 \ar[r] & \hat E \ar[r] & \OO(1)^3 \oplus \OO^3 \oplus \OO^{2m-1} \ar[r] \ar[d] & \OO(1)^4 \oplus \OO(2)^m \ar[r] \ar[d]&0 \\ && \OO(1)^3 \ar[r]^-{\sim} \oplus \OO(1) & \OO(1)^4}
	\]
	where the $\OO^3$ is the subspace we picked above. Taking kernels, we obtain the dotted lines and the desired resolution.
\end{proof}

We need to check that if we pick $V$ generally, then we can ensure that the kernels $\hat E$ are general.

\begin{lem} \label{subs-of-V-gen}
	A general semistable $V$ has general $I_2$ quotients and kernels with general resolutions. 
	
	More precisely, let $U \subset \Hom(\OO^n \oplus \Omega(1), \OO(2)^m)$ be the (open) subset of surjective maps. Let $Z \subset U \times \PP(X)^{[2]}$ be a closed subset that respects isomorphism of kernels, that is, if $\ker \phi \cong \ker \phi'$, then for each $I_2\in \PP(V)^{[2]}$, either $\phi, \phi' \in Z|_{I_2}$ or $\phi, \phi' \notin Z|_{I_2}$.
	
	Then a general semistable $V$ has no short exact sequence
	\[
	 0 \ra \hat E \ra V \ra I_2 \ra 0
	\]
	with $\hat E = \ker \phi$ for some $(\phi,I_2) \in Z$.   
\end{lem}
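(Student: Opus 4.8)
First I would mimic the proof of \cref{gen-gen}, working over the space of \ref{dag}-resolutions of $V$ rather than over $M^{ss}(v)$ directly, and finish with a dimension count on a relative Quot scheme. Call a semistable sheaf $V$ with $\ch V = v$ \emph{bad} if it fits into some exact sequence $0 \to \hat E \to V \to I_2 \to 0$ with $\hat E = \ker\phi$ for a pair $(\phi, I_2) \in Z$; this is an isomorphism invariant of $V$. Let $\mathcal{P}$ be the irreducible, locally closed, nonempty subvariety of $\Hom(\OO(1)^3 \oplus \OO^{2m+1}, \OO(2)^{m+2})$ of surjections $\psi$ with semistable kernel $V_\psi := \ker\psi$, and let $\mathcal{P}^\circ \subseteq \mathcal{P}$ be the dense open locus where $V_\psi$ has a general \ref{dag}-resolution (dense by the lemma above on general \ref{dag}-resolutions); the map $\psi \mapsto [V_\psi]$ from $\mathcal{P}^\circ$ to $M^{ss}(v)$ is dominant. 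Since ``bad'' is an isomorphism invariant, the set $\mathcal{P}^\circ_{\mathrm{bad}} := \{\psi : V_\psi \text{ bad}\}$ is a union of fibres of this map; so to prove the lemma it suffices to show that $\mathcal{P}^\circ_{\mathrm{bad}}$ lies in a \emph{proper} closed subset of $\mathcal{P}^\circ$, as then its image is not dense in $M^{ss}(v)$, and a general semistable $V$ --- all of whose \ref{dag}-resolutions lie in $\mathcal{P}^\circ$ --- lies outside that image and is therefore not bad.

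Next I would form the relative Grothendieck Quot scheme $\mathbf{Q}^\circ \to \mathcal{P}^\circ$, projective over $\mathcal{P}^\circ$, with fibre $\Quot(V_\psi, (1,0,-2))$ over $\psi$. For $\psi \in \mathcal{P}^\circ$, \cref{ker-res} guarantees that every $\hat E$ appearing in an exact sequence $0 \to \hat E \to V_\psi \to I_2 \to 0$ is locally free with a \ref{res-d}-resolution; recording the isomorphism class of that resolution together with the length-$2$ subscheme defines a rational map $\rho \colon \mathbf{Q}^\circ \dashrightarrow U \times \PP(X)^{[2]}$. Because $Z$ respects isomorphism of kernels, $\rho^{-1}(Z)$ is a well-defined closed subset of $\mathbf{Q}^\circ$ whose image in $\mathcal{P}^\circ$ is exactly $\mathcal{P}^\circ_{\mathrm{bad}}$. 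One then needs: (i) $\rho$ is dominant, so that $\rho^{-1}(Z)$ is a proper closed subset, hence $\dim\rho^{-1}(Z) < \dim\mathbf{Q}^\circ$; and (ii) the general fibre of $\mathbf{Q}^\circ \to \mathcal{P}^\circ$ is finite, so that $\dim\mathbf{Q}^\circ = \dim\mathcal{P}^\circ$. Granting both, $\dim\mathcal{P}^\circ_{\mathrm{bad}} \le \dim\rho^{-1}(Z) < \dim\mathcal{P}^\circ$, which is what the reduction above requires.

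For (i), I would argue as in \cref{gen-gen}: a general $\phi \in U$ has $(\ker\phi)^\vee$ general in $M^{ss}(e)$, and one then produces, for a general $(\phi, I_2)$, a nonsplit class in the (nonzero) group $\Ext^1(I_2, \ker\phi)$ whose associated extension $V$ is semistable and carries a general \ref{dag}-resolution, using the relation between $\PP(\GG)$ and $M^{ss}(e)$ from \cref{sec:relationship}. For (ii), the orthogonality $\chi(e\cdot f) = 0$ is exactly the expected dimension of $\Quot(V, (1,0,-2))$, and the fact that this expected dimension is attained for a general $V$ is the finiteness assertion of \cref{thm:B}, whose proof (being in \cite{bertram_potiers_2016}) does not depend on the present lemma.

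I expect (ii) to be the main obstacle. Comparing the dimension of $\mathbf{Q}^\circ$ computed over $\mathcal{P}^\circ$ with the one computed over $U \times \PP(X)^{[2]}$ through the groups $\Ext^1(I_2, \hat E)$ is delicate: the \ref{res-d}-resolution of $\hat E$ is canonical only up to the automorphisms of $\OO^{2m-1}\oplus\Omega(1)\oplus\OO(2)^m$, only the \emph{semistable} extensions of $I_2$ by $\hat E$ contribute, and --- most importantly --- one must avoid a circular appeal to the $d \gg 0$ hypotheses that \cref{subs-of-V-gen} is itself used to verify (cf.\ \cref{thm:B}, \cref{thm:goller-lin}). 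I would therefore import the generic finiteness of the Quot scheme directly from \cref{thm:B}, after checking that the part of its conclusion used here is obtained in \cite{bertram_potiers_2016} independently of the genericity input that the present lemma supplies.
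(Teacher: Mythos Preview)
Your framing via the relative Quot scheme is a reasonable alternative to the paper's approach, and you have correctly identified the pressure points (automorphisms of the \ref{res-d}-resolution, the $\Ext^1$ comparison, and the possible circularity with \cref{thm:B}). But the central inequality $\dim\rho^{-1}(Z) < \dim\mathcal{P}^\circ$ does \emph{not} follow from your (i) and (ii), and this is where the argument breaks. Generic finiteness of $\mathbf{Q}^\circ \to \mathcal{P}^\circ$ only tells you that every component of $\mathbf{Q}^\circ$ \emph{that dominates} $\mathcal{P}^\circ$ has dimension $\dim\mathcal{P}^\circ$; global dominance of $\rho$ only tells you that \emph{some} such component maps $\rho$-dominantly onto $U \times \PP(X)^{[2]}$. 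Nothing prevents another component $C$ of $\mathbf{Q}^\circ$ from dominating $\mathcal{P}^\circ$ while $\rho(C)$ lands inside a fixed proper closed $Z$ --- and then the image of $C$ in $\mathcal{P}^\circ$ is dense, so $\mathcal{P}^\circ_{\mathrm{bad}}$ is dense. Put differently, the relative Quot scheme over a general $V$ has several points, and you must show that \emph{each} of them has a general kernel, not just one; your (i) only supplies one. (There is also the minor issue that $\rho$ is not a map to $U\times\PP(X)^{[2]}$ at all, since the \ref{res-d}-resolution of $\hat E$ is only determined up to a positive-dimensional automorphism group; the preimage $\rho^{-1}(Z)$ is still well-defined because $Z$ is kernel-invariant, but ``$\rho$ dominant'' then really means an orbit-closure statement.)

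The paper's proof sidesteps all of this by working entirely from the $(\phi,I_2)$ side and doing a direct dimension count, with no appeal to \cref{thm:B}. It first computes the dimension $f_d = n^2+3n+m^2$ of the family of \ref{res-d}-resolutions of a fixed $\hat E$. Over the open locus $T_0 \subset U \times \PP(X)^{[2]}$ where $\ext^1(I_2,\ker\phi)$ equals the expected value $-\chi(I_2,\hat E)$, one forms the projective bundle of extension classes and maps it to $M^{ss}(v)$; the fibres of this map have dimension at least $f_d$, and a numerical check gives
\[
\dim U - \chi(I_2,e) - 1 - f_d + \dim\PP(X)^{[2]} \;=\; \dim M^{ss}(v),
\]
so any \emph{proper} closed $Z\cap T_0$ maps to a non-dense subset of $M^{ss}(v)$. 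This is exactly the per-component control your argument is missing. The paper then handles the jumping loci $T_j$ (where $\ext^1$ exceeds the expected value by $j$) by bounding their codimension: one identifies $\Ext^1(I_2,\Omega(1)\oplus\OO^{n}) \to \Ext^1(I_2,\OO(2)^m)$ with the map on fibres over the length-$2$ scheme and shows that the rank-drop-by-$j$ locus has codimension $c_j=(2m+2+j)j$, which beats the gain of $j$ extra extension dimensions. This second step has no analogue in your sketch and is essential, since the Ext group genuinely jumps on a positive-codimension locus.

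Finally, the circularity you flag is real in this paper: \cref{subs-of-V-gen} is explicitly invoked in the verification of the $d\gg 0$ hypotheses preceding the use of \cref{thm:B}, so importing the Quot-scheme finiteness from \cref{thm:B} would require an independent check that this particular conclusion of \cite{bertram_potiers_2016} does not rely on that input. The paper's self-contained dimension count avoids the issue altogether.
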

\begin{proof}
	Consider an $\hat E$ with a resolution \eqref{res-d}.
	By applying $\Hom(-,\OO^n \oplus \Omega(1))$, we see that $\Hom(E, \OO^n \oplus \Omega(1)) = \Hom(\OO^n \oplus \Omega(1),\OO^n \oplus \Omega(1))$, which has dimension $n^2 + 3n + 1$. Furthermore, one can change the basis in the cokernel $\OO^m$, adding $m^2$. Finally, we subtract 1 for simultaneous scaling, and see that $\hat E$ actually has a $f_d := n^2 + 3n+ m^2$ dimensional space of such resolutions. 
	
	Let $T_0 \subset  U \times \PP(X)^{[n]}$ be the space of pairs $(\phi, I_2)$ so that $\ext^1(I_2, \ker \phi)$ has the expected dimension $-\chi(I_2, \ker \phi)$.  There is a projective bundle over $T_0$ whose fibers are the spaces of extensions. There is a (perhaps partially defined) map from this bundle to $M^{ss}(v)$. By the discussion in the first paragraph, the fibers of this map have dimension at least $f_d$. We calculate
	\begin{align*}
	 &\hom(\OO^n \oplus \Omega(1), \OO(2)^m)   -\chi(I_2,e) -1 - f_d + \dim \PP(X)^{[2]} \\
	= &6nm + 15m  + (6m +3)  - 1 - \left[n^2 + 3n+ m^2 \right] + 4 \\
	= &7m^2 + 13m + 8  \\
	= &\dim M^{ss}(v)
	\end{align*}
	This shows that no closed subset of $T_0$ could give rise to a general $V$.
	
	Next, we claim that the set of $V$ that occur as extensions of pairs $(\phi,I_2)$ with too many extensions is contained in a closed subset. For an $I_2 \in \PP(X)^{[2]}$ and $j > 0$, let $T_j$ be the subset  of $\phi$ where $\ext^1(I_2, \ker \phi) = - \chi(I_2, \ker \phi) + j$. Let $Y_j$ be the projective bundle over $T_j$ parameterizing these extensions. There is a rational map $Y_j \ra M^{ss}(v)$. Again, the fibers of this map have dimension at least $f_d$. Thus the lemma will be proved if we can show that $\dim Y_j - f_d + \dim \PP(X)^{[2]} < \dim M^{ss}(v)$. 
	
	Let $\phi \in T_j$ and $\hat E  = \ker \phi$. We may also assume that  $\hat E$ is a subsheaf of a semistable $V$, in particular, it has no sections. One can check from the resolution that $\Ext^1(\OO_2, \hat E)=0$, and it follows that $\Hom(I_2, \hat E)=0$, so $\ext^1(I_2,\hat E) = -\chi(I_2, \hat E) + \ext^2(I_2,\hat E)$.
	
	Consider the map $\Ext^1(I_2, \Omega(1) \oplus \OO^{n}) \ra \Ext^1(I_2, \OO(2)^m)$. Its cokernel is $\Ext^2(I_2, \hat E)$. We will investigate the codimension of the locus of $\hat E$ where $\ext^2(I_2, \hat E) \ge j$. 
	
	By the standard long exact sequence for $I_2$ and Serre duality, we see that if a locally free sheaf $A$ has $H^1(A) = H^2(A) = 0$, then there are natural isomorphisms
	\begin{align*}
		\Ext^1(I_2, A) &\cong \Ext^2(\OO_Z, A) \cong \Hom(A, \OO_Z \otimes K)^\vee \\ &\cong H^0((A^\vee \otimes K)|_Z )^\vee \cong H^0((A \otimes K^\vee)|_Z )
	\end{align*}
	Hence the map $\Ext^1(I_2, \Omega(1) \oplus \OO^{2m-1}) \ra \Ext^1(I_2, \OO(2)^m)$ induced by $\phi$ is equivalent to the map $H^0((\Omega(1) \oplus \OO^{n})|_Z) \ra H^0(\OO(2)^m|_Z)$ on fibers induced by $\phi$.  Given any such map on fibers, one can check that it can be extended to a map on sheaves $\Omega(1) \oplus \OO^{2m-1} \ra \OO(2)^m$. That is, the map 
	\[
	\Hom(\Omega(1) \oplus \OO^{n},\OO(2)^m) \ra \Hom(\Ext^1(I_2, \Omega(1) \oplus \OO^{n}), \Ext^1(I_2, \OO(2)^n))
	\]
	is surjective. One calculates 
	\[
	\ext^1(I_2, \Omega(1) \oplus \OO^{2m-1}) = 4 + 2(2m-1), \;\;\;\; \ext^1(I_2, \OO(2)^m) = 2m.\]
	 Hence the locus in $\Hom(\Ext^1(I_2, \Omega(1) \oplus \OO^{2m-1}), \Ext^1(I_2, \OO(2)^m))$ that drops rank by at least $j$  is codimension $c_j:=(2m+2+j)j$, and the preimage of that locus in $\Hom(\Omega(1) \oplus \OO^{2m-1},\OO(2))$ has the same codimension.
	
	We have 
	\begin{align*}
	&\dim Y_j - f_d + \dim \PP(X)^{[2]} \\
	\le &\hom(\OO^n \oplus \Omega(1), \OO(2)^m) - c_j   -\chi(I_2,e) +j -1 - f_d + \dim \PP(X)^{[2]} \\
	= &6nm + 15m - c_j  + (6m +3) + j - 1 - \left[n^2 + 3n+ m^2 \right] + 4 \\
	= &7m^2 + 13m + 8 - c_j + j \\
	= &\dim M^{ss}(v) - c_j + j.
	\end{align*} 
	Since $c_j > j$ for $m \ge 1$, we are done.
\end{proof}

\begin{lem} \label{ext-jump}
For any dimension $m$ subspace $B \subset \CC^n \otimes W^\vee$, let $F_B$ be the cokernel of the map $B\otimes \OO(-2) \ra \CC^n \otimes \OO$.  Then $F_B$ satisfies $0=\Hom(F_B, T(-1)) = \Ext^2(F_B, T(-1))$  away from a subset of codimension at least 3 in $\Gr(m, \CC^n \otimes W^\vee)$.
\end{lem}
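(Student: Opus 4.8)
The plan is as follows. The case $m=1$ (where $n=1$) is trivial: $\phi_B\colon\OO(-2)\to\OO$ is automatically injective, $F_B=\OO_Q$ is the structure sheaf of a conic, $\Hom(\OO_Q,T(-1))=0$ because $T(-1)$ is locally free, and $\Ext^2(\OO_Q,T(-1))=0$ by feeding $0\to\OO(-2)\to\OO\to\OO_Q\to 0$ into $\Hom(-,T(-1))$ and using $H^i(T(-1))=0$ for $i\ge 1$ and $H^1(T(1))=0$ (both immediate from the twisted Euler sequences). So assume $m\ge 2$. I would then reduce both vanishings to the explicit linear map underlying \eqref{eq:G}. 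A homomorphism $F_B\to T(-1)$ is the same as a homomorphism $\CC^n\otimes\OO\to T(-1)$ that kills the image of $\phi_B$, hence kills $B\otimes\OO(-2)$; identifying $\Hom(\CC^n\otimes\OO,T(-1))=(\CC^n)^\vee\otimes X$ and $\Hom(B\otimes\OO(-2),T(-1))=B^\vee\otimes H^0(T(1))$ via $H^0(T(-1))=X$, this gives $\Hom(F_B,T(-1))=\ker\mu_B$, where $\mu_B\colon(\CC^n)^\vee\otimes X\to B^\vee\otimes H^0(T(1))$ is the fibre at $B$ of the differential in \eqref{eq:G} --- and this identification needs nothing about $\phi_B$. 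If, on the other hand, $\phi_B$ is injective \emph{as a map of sheaves}, then applying $\Hom(-,T(-1))$ to the short exact sequence $0\to B\otimes\OO(-2)\to\CC^n\otimes\OO\to F_B\to 0$ and using the same vanishings squeezes $\Ext^2(F_B,T(-1))$ between two zero groups, so it vanishes. Hence the locus to avoid is contained in $Y_1\cup Y_2$, where $Y_1=\{B:\phi_B\text{ is not injective as a sheaf map}\}$ and $Y_2=\{B:\mu_B\text{ is not injective}\}$, and the task is to bound $\operatorname{codim}Y_1$ and $\operatorname{codim}Y_2$ below by $3$.

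For $Y_2$ I would use an incidence variety. Let
\[
\mathcal I=\big\{(B,[\psi])\in\Gr(m,\CC^n\otimes W^\vee)\times\PP\big((\CC^n)^\vee\otimes X\big)\ :\ \psi\circ\phi_B=0\big\},
\]
so $Y_2$ is the image of the first projection and $\dim Y_2\le\dim\mathcal I$; in particular $Y_2$ will come out proper, which also recovers the generic injectivity of $\mu$ needed for $\GG$ to be a bundle on a dense open. Projecting $\mathcal I$ to the second factor, a nonzero $\psi$ is a linear map $\CC^n\to X$, and $\psi\circ\phi_B=0$ says precisely that $B\subseteq\ker\nu_\psi$, where $\nu_\psi\colon\CC^n\otimes W^\vee\to H^0(T(1))$ is $\psi\otimes\id_{W^\vee}$ followed by the surjective multiplication $X\otimes W^\vee=H^0(T(-1))\otimes H^0(\OO(2))\to H^0(T(1))$. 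So the fibre of $\mathcal I$ over $[\psi]$ is $\Gr(m,\ker\nu_\psi)$. Stratifying by $r=\operatorname{rank}\psi\in\{1,2,3\}$, and using that the multiplication is injective on $X'\otimes W^\vee$ whenever $\dim X'\le 2$ (a one-line check against the co-multiplication $X^\vee\hookrightarrow X\otimes W^\vee$ coming from the Euler sequence), one gets $\dim\ker\nu_\psi=6(n-r)$ for $r\le 2$ and $6n-15$ for $r=3$, a value depending only on $r$. Since the rank-$r$ stratum of $\PP((\CC^n)^\vee\otimes X)$ has dimension $3n-1-(3-r)(n-r)$, substituting $n=2m-1$ makes the preimages of the $r=3,2,1$ strata have dimensions $11m^2-15m-4$, $11m^2-14m-1$, $11m^2-10m$, so $\dim\mathcal I=11m^2-10m$ (attained, perhaps surprisingly, on the most degenerate stratum). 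As $\dim\Gr(m,\CC^n\otimes W^\vee)=m(11m-6)=11m^2-6m$, this yields $\operatorname{codim}Y_2\ge 4m\ge 3$.

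For $Y_1$: $\phi_B$ fails to be injective as a sheaf map exactly when its generic rank (the maximal fibrewise rank) is $<m$, i.e.\ when the rank of $\phi_B$ at $p$ is $<m$ for \emph{every} $p\in\PP^2$, equivalently when for every $p$ there is a nonzero $b\in B$ vanishing at $p$. Therefore $Y_1\subseteq\{B:B\cap(\CC^n\otimes(W^\vee)_p)\ne 0\}$ for each $p$, where $(W^\vee)_p\subset W^\vee$ is the hyperplane of conics through $p$; this is a Schubert variety of codimension $n-m+1=m$ in the Grassmannian, so $\operatorname{codim}Y_1\ge m\ge 3$ for $m\ge 3$. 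For $m=2$ one argues by hand: if $\phi_B$ is not injective its kernel is a saturated line subbundle $\OO(-d)\subset\OO(-2)^2$ whose cokernel embeds in $\OO^3$, which forces $d\in\{3,4\}$; parametrising such $B$ by the base point (for $d=3$) or the length-four complete intersection of two conics (for $d=4$), together with an embedding of the corresponding ideal sheaf into $\OO^3$, shows that $\dim Y_1$ lies far below $\dim\Gr-3$. In all cases $\operatorname{codim}(Y_1\cup Y_2)\ge 3$, which is the assertion of the lemma.

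The step I expect to be the main obstacle is the dimension accounting for $Y_2$. The bundle $\GG$ --- equivalently the map $\mu$ --- is highly special, so there is no shortcut via the naive expected codimension of a rank-drop locus; the incidence variety handles this, but one must notice that $\dim\mathcal I$ is governed by the stratum $\operatorname{rank}\psi=1$ rather than by a general $\psi$, so the stratification has to be carried out carefully, and the exact value of $\dim\ker\nu_\psi$ on each stratum hinges on the injectivity of multiplication by conics on $X'\otimes W^\vee$ when $\dim X'\le 2$. The separate small-$m$ analysis of $Y_1$ is routine but genuinely necessary, since the Schubert bound alone only gives codimension $\ge m$.
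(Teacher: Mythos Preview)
Your argument is correct and, for the main step, follows essentially the same route as the paper: both bound the locus where $\Hom(F_B,T(-1))\neq 0$ via an incidence correspondence stratified by the rank of the linear map $\psi\colon\CC^n\to X$ (the paper phrases this dually, fixing $\phi\in X^n$ and stratifying by the dimension $d$ of the span of its entries, which is exactly your $r$). Your arithmetic is cleaner and gives the sharper bound $\operatorname{codim}Y_2\ge 4m$, whereas the paper only extracts $\operatorname{codim}\ge 3$ from the worst stratum.

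Where your proof genuinely differs is in the treatment of $\Ext^2$. The paper asserts ``$\Ext^2(F_B,T(-1))=0$ for any $B$'' by applying $\Hom(-,T(-1))$ to the defining sequence of $F_B$, implicitly treating $0\to B\otimes\OO(-2)\to\CC^n\otimes\OO\to F_B\to 0$ as short exact. You correctly observe that this fails when $\phi_B$ is not injective as a sheaf map, and you handle that locus $Y_1$ separately. This is not pedantry: for instance when $m=2$ and $B=\operatorname{span}\bigl((x^2,xy,0),(xy,y^2,0)\bigr)$, one has $\ker\phi_B\cong\OO(-3)$, the image is $I_p(-1)$, and a direct computation gives $\Ext^2(F_B,T(-1))\cong\Ext^1(I_p(-1),T(-1))\cong\CC^2\neq 0$. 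So the paper's blanket $\Ext^2$ vanishing is false as stated, and your bound on $Y_1$ is what actually makes the lemma go through. Your Schubert-variety bound $\operatorname{codim}Y_1\ge m$ handles $m\ge 3$; the hand parametrisation for $m=2$ is sketchy but sound in outline (both the $d=3$ and $d=4$ strata are parametrised by at most a point of $\PP^2$ plus an embedding of the relevant rank-one sheaf into $\OO^3$, giving dimension far below $\dim\Gr-3$).
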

\begin{proof}
 If we pick a basis for $B$, the map $B \otimes \OO(-2) \ra \CC^n \otimes \OO$ can be described by a $n \times m$ matrix $M$ with entries in $W^\vee = H^0(\PP(X), \OO(2))$.
 
 Applying $\Hom(-,T(-1))$ to the defining sequence of $F_B$, we see that $\Ext^2(F_B, T(-1)) = 0$ for any $B$, and $\Hom(F_B, T(-1))$ is equal to the kernel of the induced map $\Hom( \OO^n, T(-1)) \ra \Hom(B \otimes \OO(-2), T(-1))$. We can also view this induced map as a map $F:H^0(T(-1))^n \ra H^0(T(1))^m$, given by taking $n$ $(-1)$-homogeneous vector fields on $X$ and multiplying by $M^T$ to obtain $m$ 1-homogeneous vector fields.

	For $m=1$, one can easily check that $F$ is injective.  We give the proof for $m \ge 2$.

	Recall that $H^0(T(-1)) = X$. Let us fix $\phi \in X^{n}$. Consider the diagram
	\[
	\xymatrix{&0 \ar[d]\\
		& H^0(\OO(2) \otimes \OO(-1)) \ar[d]  \\
		H^0(\OO(2) \otimes \CC^{n}) \ar^{\hat \phi}[r] & H^0(\OO(2) \otimes X) \ar^e[d] \\
		& H^0(\OO(2)\otimes T(-1)) \ar[d]\\
		&0}
	\]
	The vertical maps come from the twisted Euler sequence, and the horizontal map $\hat \phi$ is taking the dot product with $\phi$. We are thinking of $H^0(\OO(2) \otimes \CC^{n})$  as all possible single rows of $M^T  $. 

	First, assume that the entries of $\phi$ span $X$. This implies that $\hat \phi$ is surjective. Hence, $e \circ \hat \phi$ is also surjective, so its kernel has codimension equal to $h^0(T(1))=15$. 
	Thus, the locus of $M^T$ so that $M^T\phi = 0$ is codimension $15m$ in the space of all $M$. The space of possible $\phi$ is dimension $(2m-1)3$, so the codimension of bad $M^T$ is at least $9m+3$.

	Next, assume that the entries of $\phi$ span only a subspace $X' \subset X$ of dimension $d$ (where $d=1$ or $2$). Then the image of $\hat \phi$ has dimension $6d$. The image of $\hat \phi$ does not intersect the kernel of $e$ non-trivially, so the image of $e \circ \hat \phi$ also has dimension $6d$. Hence, the codimension of $M^T$ so that $M^T \phi = 0$ is $6dm$. But the space of possible $\phi$ is $(2m-1)d + 2$. (That is $2m-1$ choices of an entry from $X'$, and a 2 dimensional choice of $X'$.) Thus, the codimension of bad $M^T$ is at least $5d-2$.

	Finally, we remark that the action of $GL(m)$ on the space of $M^T$, corresponding to the choice of basis for $B$, does not affect whether the corresponding map $F$ is injective, so the codimensions we computed are also good on $\Gr(m, \CC^n \otimes W^\vee)$.
\end{proof}

\begin{proof}[Proof of \cref{kernel-in-moduli}]
	$\hat E$ is locally free because it is the kernel of a map from a locally free sheaf to a torsion free sheaf, and has a general \ref{res-d}-resolution by \cref{ker-res} and \cref{subs-of-V-gen}. Hence its dual $E:=\hat E^\vee$ has a general \ref{eq:Eres}-resolution. We discussed in the beginning of \cref{sec:the-mod} how such a sheaf fits into a sequence \eqref{eq:Eext}. The condition on $\Ext$ groups follows from \cref{ext-jump}.
\end{proof}

 
 \bibliographystyle{alpha}
 
 \bibliography{mybib}{}

\begin{thebibliography}{ABCH13}

\bibitem[ABCH13]{arcara_minimal_2013}
Daniele Arcara, Aaron Bertram, Izzet Coskun, and Jack Huizenga.
\newblock The minimal model program for the {Hilbert} scheme of points on and
  {Bridgeland} stability.
\newblock {\em Advances in Mathematics}, 235:580--626, March 2013.

\bibitem[BGJ16]{bertram_potiers_2016}
Aaron Bertram, Thomas Goller, and Drew Johnson.
\newblock Le {Potier}'s strange duality, quot schemes, and multiple point
  formulas for del {Pezzo} surfaces.
\newblock October 2016.
\newblock arXiv: 1610.04185.

\bibitem[BS12]{berczi_thom_2012}
Gergely Bérczi and András Szenes.
\newblock Thom polynomials of {Morin} singularities.
\newblock {\em Annals of Mathematics}, 175(2):567--629, March 2012.

\bibitem[CHW14]{coskun_effective_2014}
Izzet Coskun, Jack Huizenga, and Matthew Woolf.
\newblock The effective cone of the moduli space of sheaves on the plane.
\newblock {\em Journal of the European Mathematical Society}, page to appear,
  January 2014.
\newblock arXiv: 1401.1613.

\bibitem[Dan02]{danila_resultats_2002}
Gentiana Danila.
\newblock Résultats sur la conjecture de dualité étrange sur le plan
  projectif.
\newblock {\em Bull. Soc. Math. France}, 130(1):1--33, 2002.

\bibitem[dP14]{boeck_decompositions_2014}
Melanie {de Boeck} and Rowena {Paget}.
\newblock {Decompositions of some twisted Foulkes characters}.
\newblock Sep 2014.
\newblock arXiv:1409.0737.

\bibitem[EF09]{edidin_grassmannians_2009}
D.~Edidin and C.~A. Francisco.
\newblock Grassmannians and representations.
\newblock {\em J. Commut. Algebra}, 1(3):381--392, 09 2009.

\bibitem[GL19]{goller_rank1_2019}
Thomas {Goller} and Yinbang {Lin}.
\newblock {Rank-1 sheaves and stable pairs on del Pezzo surfaces}.
\newblock Jul 2019.
\newblock arXiv:1907.05180.

\bibitem[HL10]{huybrechts_geometry_2010}
Daniel Huybrechts and Manfred Lehn.
\newblock {\em The {Geometry} of {Moduli} {Spaces} of {Sheaves}}.
\newblock Cambridge University Press, second edition, May 2010.

\bibitem[Joh16]{johnson_two_2016}
Drew Johnson.
\newblock {\em Two enumerative problems in algebraic geometry}.
\newblock PhD thesis, University of Utah, 2016.

\bibitem[Joh17]{johnson_universal_2017}
Drew Johnson.
\newblock Universal {Series} for {Hilbert} {Schemes} and {Strange} {Duality}.
\newblock August 2017.
\newblock arXiv: 1708.05743.

\bibitem[LP05]{le_potier_dualite_2005}
J.~Le~Potier.
\newblock Dualité étrange sur le plan projectif.
\newblock {\em Unpublished}, 2005.

\bibitem[MO07]{marian_level-rank_2007}
Alina Marian and Dragos Oprea.
\newblock The level-rank duality for non-abelian theta functions.
\newblock {\em Invent. Math.}, 168(2):225--247, January 2007.

\bibitem[MO08a]{marian_sheaves_2008}
Alina Marian and Dragos Oprea.
\newblock Sheaves on abelian surfaces and strange duality.
\newblock {\em Math. Ann.}, 343(1):1--33, August 2008.

\bibitem[MO08b]{marian_tour_2008}
Alina Marian and Dragos Oprea.
\newblock A tour of theta dualities on moduli spaces of sheaves.
\newblock In {\em Curves and abelian varieties}, volume 465 of {\em Contemp.
  {Math}.}, pages 175--201. Amer. Math. Soc., Providence, RI, 2008.

\bibitem[MO14]{marian_strange_2014}
Alina Marian and Dragos Oprea.
\newblock On the strange duality conjecture for abelian surfaces.
\newblock {\em Journal of the European Mathematical Society}, 16(6):1221--1252,
  2014.

\bibitem[MOY10]{marian_generic_2010}
Alina Marian, Dragos Oprea, and Kota Yoshioka.
\newblock Generic strange duality for {K3} surfaces.
\newblock {\em arXiv:1005.0102 [math]}, May 2010.
\newblock arXiv: 1005.0102.

\bibitem[MR10]{marangell_general_2010}
R.~Marangell and R.~Rimányi.
\newblock The general quadruple point formula.
\newblock {\em Amer. J. Math.}, 132(4):867--896, August 2010.

\bibitem[PP17]{pak_complexity_2017}
Igor Pak and Greta Panova.
\newblock On the complexity of computing kronecker coefficients.
\newblock {\em computational complexity}, 26(1):1--36, Mar 2017.

\bibitem[Sca07]{scala_dualite_2007}
Luca Scala.
\newblock Dualité étrange et cohomologie du schéma de {Hilbert}.
\newblock {\em Gaz. Math.}, (112):53--65, April 2007.

\bibitem[vL99]{leeuwen_littlewood_1999}
Marc A.~A. van {Leeuwen}.
\newblock {The Littlewood-Richardson rule, and related combinatorics}.
\newblock Aug 1999.
\newblock arXiv:math/9908099.

\end{thebibliography}
\end{document}